\numberwithin{equation}{section}
\newtheorem{thm}{Theorem}[section]
\newtheorem{lem}[thm]{Lemma}
\newtheorem{prop}[thm]{Proposition}
\newtheorem{cor}[thm]{Corollary}
\newtheorem{conj}[thm]{Conjecture}
\newtheorem{externaltheorem}{Theorem}
\theoremstyle{definition}
\newtheorem{question}[thm]{Question}
\newtheorem{rem}[thm]{Remark}
\newcommand{\N}{\mathbb{N}}
\newcommand{\Z}{\mathbb{Z}}
\newcommand{\F}{\mathbb{F}}
\title{Diophantine tuples and product sets in shifted powers}
\author{Ernie Croot}
\address{School of Mathematics\\ Georgia Institute of Technology\\ Atlanta, GA 30332\\ United States}
\email{ernest.croot@math.gatech.edu}
\author{Chi Hoi Yip}
\address{School of Mathematics\\ Georgia Institute of Technology\\ Atlanta, GA 30332\\ United States}
\email{cyip30@gatech.edu}
\subjclass[2020]{Primary 11B30, 11D72; Secondary 11N36, 11D41, 05C35}
\keywords{Diophantine tuples, perfect powers, sieve methods, sums of powers}
\begin{document}

\begin{abstract}
Let $k\geq 2$ and $n\neq 0$. A Diophantine tuple with property $D_k(n)$ is a set of positive integers $A$ such that $ab+n$ is a  $k$-th power for all $a,b\in A$ with $a\neq b$. Such generalizations of classical Diophantine tuples have been studied extensively. In this paper, we prove several results related to robust versions of such Diophantine tuples and discuss their applications to product sets contained in a nontrivial shift of the set of all perfect powers or some of its special subsets. In particular, we substantially improve several results by B\'{e}rczes--Dujella--Hajdu--Luca, and Yip. We also prove several interesting conditional results. Our proofs are based on a novel combination of ideas from sieve methods, Diophantine approximation, and extremal graph theory.
\end{abstract} 

\maketitle

\section{Introduction}
A set $\{a_1,a_2,\ldots,a_m\}$ of distinct positive integers is called a
\emph{Diophantine $m$-tuple} if every pair of distinct elements satisfies
\[
a_i a_j + 1 \text{ is a perfect square} \qquad (1\le i<j\le m).
\]
In other words, multiplying any two different elements of the set produces a
number that lies just one below a perfect square. Here, the shift is essential:
without it the problem becomes trivial, since taking $a_i=t b_i^2$ for a fixed
$t$ forces every product $a_i a_j$ to be a square.

The study of Diophantine tuples goes back to Fermat: the classical set $\{1,3,8,120\}$ is the first known example of a Diophantine $4$-tuple. More broadly, Diophantine tuples and their generalizations and variants serve as a prototypical testing ground for how strong arithmetic constraints can govern the size and structure of sets of integers. Questions about their existence, classification, and maximal size naturally lead to challenging Diophantine equations and finiteness problems, and they have motivated a range of techniques that extend well beyond the original setting.

Despite their elementary formulation, Diophantine tuples have rich connections with Diophantine equations and Diophantine approximation, and they are closely related to several central themes in number theory, including elliptic curves, arithmetic geometry, sieve methods, and arithmetic combinatorics. We refer to the recent book of Dujella \cite{D24} for background and a comprehensive overview.

In this paper, we obtain improved upper bounds---both unconditional and
conditional---on the size of several well-studied families of variants of
Diophantine tuples, and we clarify how these variants are closely related.

Let $n$ be a nonzero integer and let $k \ge 2$. A set $A$ of distinct positive integers is a \textit{Diophantine tuple with property $D_{k}(n)$} if the product of any two distinct elements in $A$ is $n$ less than a perfect $k$-th power. Following the standard notations, we also write
\[M_{k}(n)=\sup \{|A| \colon A\subseteq{\mathbb{N}} \text{ satisfies the property }D_{k}(n)\}.\] 
These natural generalized notions of Diophantine tuples are of particular interest; see, for example \cite{BDHL11, BD03, DKM22, D02, DL05, GM20, G01, KYY, Y24, Y24+}. The best-known upper bound on $M_k(n)$ is of the form $M_k(n)\ll_k \log |n|$; see \cite{KYY, Y24, Y24+} for the best-known implied constant depending on $k$. Here we used the Vinogradov notation $\ll$: we write $X \ll Y$ if there is an absolute constant $C>0$ so that $|X| \leq CY$. Under the Uniformity Conjecture \cite{CHM} (a consequence of the Bombieri–Lang conjecture), it is well-known that for each $k \geq 2$, there is a constant $C_k$ such that $M_k(n)\leq C_k$ holds for all nonzero integers $n$ \cite{D02,KYY}; see also Remark~\ref{rem:uniformity}. In Corollary~\ref{cor:absolute}, we show that assuming both the Uniformity Conjecture and the Lander--Parkin--Selfridge conjecture~\cite{LPS67}, there is a constant $C$ such that $M_k(n)\leq C$ holds for all $k\geq 2$ and $n \neq 0$.

In 2002, Gyarmati, S\'ark\"ozy, and Stewart~\cite{GSS02} initiated the study of two closely related variants of Diophantine tuples by enlarging the set of perfect $k$-th powers (with a fixed $k$) to the set of perfect $k$-th powers with bounded $k$ or the set of all perfect powers. More precisely, set $$V_d=\{x^k: x \in \N, 2\leq k \leq d\}$$ for each integer $d\geq 2$; then the set of perfect powers is 
$$V_{\infty}:=\bigcup_{d\geq 2}V_d=\{x^k: x,k \in \N, k \geq 2\}.$$ They studied the size of a set of positive integers $A$ such that $aa'+1$ is in $V_d$ (where $2\leq d\leq \infty$) for all $a, a' \in A$ that are distinct. More generally, one can study the same question by replacing the shift $1$ with any nonzero shift; see, for example, \cite{BDHL11, BDHT16}. For brevity, we extend the familiar notions $D_k(n)$ and $M_k(n)$ to this more general setting. Let $n$ be a nonzero integer and let $2\leq d\leq \infty$, we say a set $A$ of positive integers is a \textit{Diophantine tuple with property $D_{\leq d}(n)$} if the product of any two distinct elements in $A$ is $n$ less than an element in $V_d$, and we denote 
\[M_{\leq d}(n)=\sup \{|A| \colon A\subseteq{\mathbb{N}} \text{ satisfies the property }D_{\leq d}(n)\}.\] 
In particular, the case $n=1$ is well-studied. Gyarmati, S\'ark\"ozy, and Stewart~\cite{GSS02} proved that if $2\leq d<\infty$ and $A \subseteq \{1,2,\ldots, N\}$ is a Diophantine tuple with property $D_{\leq d}(1)$, then $|A|\ll \frac{d^2}{(\log d)^2}\log \log N$. They then deduced that if $A \subseteq \{1,2,\ldots, N\}$ is a Diophantine tuple with property $D_{\leq \infty}(1)$, then $|A|\ll (\log N)^2/\log \log N$. These two results have been improved by various authors~\cite{BG04, DEGS05, L05, GS07, S08}. Regarding their first result, the best-known improvement is due to Bugeaud and Gyarmati \cite{BG04}, where they showed that $M_{\leq d}(1)\ll (d/\log d)^2$ for $2\leq d<\infty$. As for the second result, the best-known bound is $|A|\ll (\log N)^{2/3} (\log \log N)^{1/3}$, due to Stewart \cite{S08}. 

As a highly relevant quantity, B\'{e}rczes, Dujella, Hajdu, and Luca~\cite{BDHL11} introduced the following function: for $x\geq 1$, let $f(x)$ be the maximal $K$ such that there exists a set $A\subseteq [1,x]\cap \N$ with $K$ elements and some $1\leq n \leq x$ such that $ab+n$ is a perfect power for all $a,b \in A$ with $a\neq b$. Equivalently, 
$$f(x)=\max \{|A|: A\subseteq [1,x]\cap \N \text{ has property } D_{\leq \infty}(n) \text{ for some } 1\leq n \leq x\}.$$
For our purpose, it is also natural to define a similar function to include those negative $n$'s:
$$\widetilde{f}(x)=\max \{|A|: A\subseteq [1,x]\cap \N \text{ has property } D_{\leq \infty}(n) \text{ for some } 1\leq |n| \leq x\}.$$
In \cite[Theorem 2 and Remark 2]{BDHL11}, B\'{e}rczes, Dujella, Hajdu, and Luca showed that $f(x)\leq x^{2/3+o(1)}$ as $x\to \infty$, and $f(x)\geq \lfloor(\log \log x/ 2\log \log \log x)^{1/3} \rfloor$ for $x>e^{e^e}$.

Our new results are presented in Section~\ref{sec:newresults}. In Section~\ref{subsec:main}, we state our two main theorems. The first extends the upper bound for $M_{\le d}(1)$ due to Bugeaud and Gyarmati \cite{BG04} to $M_{\le d}(n)$ for an arbitrary nonzero integer $n$. The second establishes that
$$
f(x)\leq \exp(L(\log \log x)^2)
$$
for some absolute constant $L$, which is a substantial improvement over the bound $x^{2/3+o(1)}$ of B\'{e}rczes, Dujella, Hajdu, and Luca \cite{BDHL11}. The proofs combine tools from sieve methods, Diophantine approximation, and extremal graph theory.

In Section~\ref{subsec:bipartite}, we provide further background and record several additional new results needed for the proofs. Shifted product sets contained in perfect powers are significantly more difficult to handle than those contained in perfect $k$-th powers for a fixed $k$, so it is natural to separate the contributions coming from squares, cubes, fourth powers, and so on. Previous approaches then used Ramsey theory to recombine these contributions, via bounds for $k$-th power Diophantine tuples. The key novelty of our method is to introduce and study robust variants of Diophantine tuples that align with this separation. Using tools from extremal graph theory, we reduce the problem to the study of bipartite variants of Diophantine tuples introduced by the second author \cite{Y24}. We then study these bipartite Diophantine tuples primarily using sieve methods, assisted by an analysis of their finite-field models via character sum estimates. This framework controls the contributions of different $k$-th powers much more efficiently, and consequently yields the improved bound on $f(x)$. Finally, in Section~\ref{subsec:cond}, we discuss several related conditional results; for instance, assuming the ABC conjecture, we prove that $M_{\leq \infty} (n)\leq f(2|n|^{17})$.  

The structure of the rest of the paper is the following. In Section~\ref{sec:prelim}, we provide additional background and prove some preliminary results. In Section~\ref{sec:large}, we bound the contribution of large elements in a Diophantine tuple with the desired property. In Section~\ref{sec:finitefield}, we study various finite field analogues of Diophantine tuples as a preparation to apply sieve methods. In Section~\ref{sec:bipartite}, we apply sieve methods to prove the improved bounds on bipartite Diophantine tuples stated in Section~\ref{subsec:bipartite}. Then, in Section~\ref{sec:main}, we combine results from all previous sections to prove Theorem~\ref{thm:Vd} and Theorem~\ref{thm:infty}. Finally, in Section~\ref{sec:cond}, we prove the conditional results stated in Section~\ref{subsec:cond}. 

Throughout the paper,~$p$ always denotes a prime, and $\sum_p$ and $\prod_p$ represent sums and products over all primes.

\section{New results}\label{sec:newresults}
\subsection{Main results}\label{subsec:main}

Our first result extends the result of Bugeaud and Gyarmati \cite{BG04} to $M_{\leq d}(n)$ for a general nonzero integer $n$. 

\begin{thm}\label{thm:Vd}
Let $d,n$ be integers with $2\leq d<\infty$ and $n\neq 0$. Then we have
$$
M_{\leq d}(n)\ll \frac{d^2}{(\log d)^2}+e^{dL}\log |n|,
$$
where $L$ is an absolute constant, and the implied constant is absolute. In particular, if $2\leq d<\infty$ is fixed, then $M_{\leq d}(n)\ll \log |n|$. 
\end{thm}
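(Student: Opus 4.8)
The plan is to fix a threshold $X=|n|^{C}$ for a suitable absolute constant $C$, split $A$ into the part $A_{L}$ of elements exceeding $X$ and the part $A_{S}$ of elements at most $X$, bound $|A_{L}|\ll d^{2}/(\log d)^{2}$ and $|A_{S}|\ll e^{Ld}\log|n|$, and add. A preliminary reduction: if $ab+n=x^{k}$ with $2\le k\le d$ and $p$ is a prime divisor of $k$, then $ab+n$ is also a perfect $p$-th power with $p\le k\le d$, so we may colour each edge $\{a,b\}$ of the complete graph on $A$ by a prime $p(a,b)\le d$; there are $\pi(d)\ll d/\log d$ colours, and the number of unordered colour-pairs is $\asymp d^{2}/(\log d)^{2}$, which is the source of the first term.

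For $A_{L}$, I would establish the following ``robust Diophantine tuple'' statement: a $D_{\le d}(n)$-tuple all of whose elements exceed $|n|^{C}$ has size $\ll d^{2}/(\log d)^{2}$. Fix two of its elements $a<b$; for every other element $c$ there are exponents $l,m\le d$ with $ac+n=v^{l}$ and $bc+n=w^{m}$, whence $bv^{l}-aw^{m}=(b-a)n$. When $l,m\ge 2$ and $(l,m)\ne(2,2)$ the affine curve $bV^{l}=aW^{m}+(b-a)n$ has genus at least $1$, so Siegel's theorem leaves only finitely many integral points and hence only finitely many such $c$; only the all-squares pattern $(l,m)=(2,2)$ can persist. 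A sieve estimate points the same way: the perfect powers up to $c^{2}$ of exponent $\ge3$ lying in any fixed residue class modulo $c$ number only $O_{\varepsilon}(c^{\varepsilon})$, so each $c$ has few neighbours below it whose product-shift fails to be a square. Feeding both facts into an extremal-graph-theory argument — the colour-pairs at which the exponent patterns interact accounting for the $d^{2}/(\log d)^{2}$ — reduces $A_{L}$, after discarding that many exceptions, to a $D_{2}(n)$-tuple with all elements $>|n|^{C}$, and a Diophantine-approximation gap principle bounds such a tuple absolutely. For $A_{S}$, the gap principle for $D_{\le d}(n)$ forces the elements (past a short initial stretch) to grow by a ratio $1+\delta$ with $\delta\gg e^{-Ld}$, the exponential loss coming from the $d$-dependence of the separation estimates for $l$-th and $m$-th powers; since the elements lie in $[1,|n|^{C}]$ this gives $|A_{S}|\ll e^{Ld}\log|n|+d$. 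Adding the two bounds proves the theorem, and the ``in particular'' clause follows because for fixed $d$ both $d^{2}/(\log d)^{2}$ and $e^{Ld}$ are absolute constants.

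The main obstacle is the large-element case, and in particular reconciling its two ingredients: Siegel's theorem gives finiteness but with an exception count that a priori depends on (and grows with) the pair $a,b$, while the sieve bound is naturally expressed through the size of the largest element of $A_{L}$. Making these cooperate — so that the number of elements one must throw away is only $O(d^{2}/(\log d)^{2})$, uniformly in $n$ — is the technical core, and is exactly where the three advertised tools (sieve methods, Diophantine approximation, extremal graph theory) must be balanced against one another.
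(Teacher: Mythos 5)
Your proposal shares the paper's high-level decomposition — split $A$ at a polynomial threshold in $|n|$, bound the large part by $\ll d^2/(\log d)^2$, bound the small part by $\ll e^{Ld}\log|n|$ — but both halves of your argument have genuine gaps, and the paper's actual methods are different in both places.

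For $A_L$, invoking Siegel's theorem is a dead end for this problem: Siegel's theorem is ineffective, and the number of integral points it ``finitizes'' on the curve $bV^\ell = aW^m + (b-a)n$ depends uncontrollably on $a,b,n,\ell,m$. You acknowledge this yourself as ``the technical core,'' but the proposal offers no mechanism to repair it; the ineffectivity is structural, not a technicality to be fixed by combining with a sieve. The paper instead avoids Siegel entirely and uses \emph{explicit} effective inputs: Dujella's bound (at most $21$ elements of a $D_2(n)$-tuple above $|n|^3$) handles $k=2$ via Tur\'an's theorem, and explicit Diophantine-approximation results on bipartite Diophantine tuples of property $BD_k(n)$ (Lemma~\ref{lem:K23}, from prior work) combined with a K\H{o}v\'ari--S\'os--Tur\'an count handle each $k\ge 3$. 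Summing the resulting edge bounds over the $\pi(d)$ primes immediately gives $|A_L|\ll\pi(d)^2$, which is Proposition~\ref{prop:large}. No reduction to ``an essentially $D_2(n)$-tuple'' is performed or needed; each colour class is bounded directly.

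For $A_S$, your proposed ``gap principle for $D_{\le d}(n)$ with ratio $1+\delta$, $\delta\gg e^{-Ld}$'' is not established anywhere and is not a known tool; gap principles of this kind are available for fixed exponent $k$ but do not obviously aggregate across all exponents $2\le k\le d$ with the claimed exponential loss, and as stated this is a placeholder rather than an argument. The paper's route is a sieve: it works modulo primes $p\equiv 1\pmod r$ where $r=\prod_{p\le d}p$, uses a Vinogradov-type character-sum estimate (Proposition~\ref{prop:ap}) to show $|A\bmod p|\ll 4^{\pi(d)}\sqrt{p}$, and feeds this into Gallagher's larger sieve. This is precisely where Linnik's constant enters — one needs primes in the arithmetic progression $1\pmod r$ with modulus $r=e^{O(d)}$, and the quantitative Linnik theorem forces $Q\gg r^L$ — which is what produces the factor $e^{dL}$ in front of $\log|n|$. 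Your proposal locates the exponential loss in separation estimates for $\ell$-th and $m$-th powers, which is a different (and unsubstantiated) source for the same-looking constant. Without replacing the Siegel step with an effective bound and without a concrete statement and proof of the gap principle, the proposal does not close.
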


The constant $L$ in the above theorem (and a few other results in this paper) is related to Linnik's constant. It naturally comes from a quantitative version of Linnik's theorem used in our proof. Also note that if $d<\infty$ is fixed, we have $M_{\leq d}(n)\ll_{d} \log |n|$, which is of the same shape as the best-known upper bound on $M_d(n)$.

Our second result concerns the case $d=\infty$.

\begin{thm}\label{thm:infty}
Let $n$ be a nonzero integer and $N$ be a positive integer. If $A \subseteq [1,N]$ is a Diophantine tuple with property $D_{\leq \infty}(n)$, then there is an absolute constant $L$ such that
$$
|A|\ll \exp\big(L(\log \log |n|)^2\big)+ \log N,
$$    
where the implied constant is absolute. In particular, if $L (\log \log |n|)^2\leq \log \log N$, then $|A|\ll \log N$.
\end{thm}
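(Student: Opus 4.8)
The plan is to reduce the case $d = \infty$ to a union of the "bounded degree" cases $d < \infty$ handled by (the method behind) Theorem~\ref{thm:Vd}, by splitting the exponents $k$ appearing in the representations $ab + n = x^k$ into a "small exponent" range and a "large exponent" range, with the cutoff chosen as a function of $N$. Suppose $A \subseteq [1,N]$ satisfies $D_{\leq\infty}(n)$, so for every pair $a \neq b$ in $A$ we have $ab + n = x^k$ for some $x \in \N$ and some $k \geq 2$. Since $ab + n \leq N^2 + |n|$, if the exponent $k$ is large — say $k > K$ for a threshold $K \asymp \log N$ to be fixed — then the base $x$ must be very small, in fact $x = O(1)$ once $K$ is a suitable constant times $\log(N^2+|n|)$; this forces $ab+n$ to lie in a fixed finite set, hence $ab$ lies in a fixed finite set, and a pair of integers $a,b$ with $ab$ equal to a fixed value contributes only $O(1)$ to a graph-theoretic count. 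The remaining pairs all have exponent $k \le K$, i.e. they witness property $D_{\leq K}(n)$ "edge by edge."

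The main step is then to run the extremal-graph-theoretic argument: form a graph $G$ on vertex set $A$ and color each edge $\{a,b\}$ by (a choice of) the exponent $k \leq K$ for which $ab + n$ is a perfect $k$-th power; the pairs with only large exponents form a subgraph of bounded degree (by the previous paragraph, using that for fixed value $v$ the equation $ab = v$ has $O(1)$ solutions, so actually we should phrase it as: those edges span few edges, or delete $O(\log N)$ vertices to kill them). After deleting those $O(\log N)$ vertices, every edge of the surviving graph $G'$ on $A' \subseteq A$ carries an exponent $k \le K$. By averaging, some single exponent $k_0 \leq K$ is used on at least $|E(G')| / K$ edges; restricting to that color class, Ramsey/Turán-type extraction (or directly the structure used to prove the $M_k(n) \ll \log|n|$ bound) yields either $|A'|$ small or a large subset on which every product $ab$ lies in $V_{k_0}$ shifted by $n$ — but I cannot afford to lose a factor growing with $k_0$. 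So instead the cleaner route, and the one I would pursue, is to apply Theorem~\ref{thm:Vd} directly: $A'$ (minus one more negligible cleanup) is a Diophantine tuple with property $D_{\leq K}(n)$, hence
$$
|A'| \ll \frac{K^2}{(\log K)^2} + e^{KL'}\log|n|
$$
for the absolute constant $L'$ from Theorem~\ref{thm:Vd}.

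It remains to choose $K$ to balance the terms. Taking $K \asymp \log\log|n|$ (more precisely $K = c\log\log|n|$ for a small absolute constant $c$, so that $e^{KL'} \leq (\log|n|)^{cL'} = |n|^{o(1)}$, in fact so that $e^{KL'}\log|n| \le \exp(O((\log\log|n|)^2))$ after we also account for the $K^2/(\log K)^2$ term which is $\ll (\log\log|n|)^2 \le \exp(O((\log\log|n|)^2))$) gives
$$
|A| \le |A'| + O(\log N) \ll \exp\big(L(\log\log|n|)^2\big) + \log N
$$
for a suitable absolute $L$, as claimed; and when $L(\log\log|n|)^2 \le \log\log N$ the first term is $\le \exp(\log\log N) = \log N$, absorbing into the second. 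I expect the main obstacle to be the bookkeeping at the threshold: one must make sure that the exponent $k$ chosen for each edge can be taken $\le K$ whenever any valid exponent is $\le K$, and that the edges all of whose valid exponents exceed $K$ genuinely contribute only $O(\log N)$ vertices — this uses the divisor-type bound that $ab = v$ has $O(1)$ solutions with $a,b \in A$ only after we note $v$ ranges over a set of size $O(1)$, but $a \mapsto (a, v/a)$ is injective, so the union of all such edges forms a graph with $O(1)$ edges, hence deleting $O(1)$ vertices suffices — even better than $O(\log N)$. The genuinely delicate point is verifying that Theorem~\ref{thm:Vd} is applicable with $d = K$ growing slowly with $|n|$, i.e. that its implied constants are truly absolute and uniform in $d$; granting that (as the statement asserts), the balancing is routine.
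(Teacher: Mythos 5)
Your proposal has a genuine gap that makes the claimed bound unreachable. The central problem is an incompatibility in the choice of the threshold $K$. For the ``large exponent forces $x=O(1)$'' step you need $K \gg \log(N^2+|n|)$, i.e.\ $K \asymp \log N$ (or, after first truncating $A$ to $[1,\mathrm{poly}(|n|)]$, $K \asymp \log|n|$). But you then take $K \asymp \log\log|n|$ in the balancing step; with that choice the large-exponent reduction simply fails, since $(N^2+|n|)^{1/K}$ is nowhere near bounded. And if you honestly take $K \asymp \log|n|$ and apply Theorem~\ref{thm:Vd} with $d=K$, the term $e^{KL'}\log|n|$ is $|n|^{O(1)}$ --- polynomial in $|n|$, which is exponentially worse than the target $\exp\bigl(L(\log\log|n|)^2\bigr)$. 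The exponential-in-$d$ loss in Theorem~\ref{thm:Vd} (it comes from running Linnik with modulus $r=\prod_{p\le d}p$, so $\log r \asymp d$) is exactly the bottleneck your plan cannot escape, and there is no choice of $K$ that makes both halves of the argument work.

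The paper's proof does not go through Theorem~\ref{thm:Vd}. It first invokes Proposition~\ref{prop:largeoo} --- which rests on the linear-forms-in-logarithms bound of Proposition~\ref{prop:p1p2}, not on anything about bases being small --- to show that the elements of $A$ exceeding $(4|n|)^{17}$ contribute only $O(\log N)$. That is the source of the $\log N$ term; it has nothing to do with ``large-exponent edges.'' (Your accounting for those edges is also off: for each of the $O(\log N)$ admissible values $v$ of $ab$, the edges with $ab=v$ form a matching in $A$, so the large-exponent subgraph has maximum degree $O(\log N)$ but can have $\Theta(|A|\log N)$ edges, which cannot be removed by deleting $O(\log N)$ or $O(1)$ vertices.) After truncating to $A\subseteq[1,M]$ with $M=(4|n|)^{17}$, the paper colors each pair by a prime $p \le 5\log M$ and applies the uniform bipartite bound of Theorem~\ref{thm:loglogN}: for every such $p$, either $|A|\lesssim\log\log M$ or the other side has size $\lesssim(\log M)^{O(1)}$. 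This forbids a monochromatic $K_{s,t}$ with $s\asymp\log\log|n|$ and $t\asymp(\log|n|)^{O(1)}$; applying K\"ovari--S\'os--Tur\'an in each color and summing over the $\lesssim\log|n|$ colors gives $|A|\lesssim(\log|n|)^{O(\log\log|n|)}=\exp\bigl(O((\log\log|n|)^2)\bigr)$. The missing ingredient in your plan is precisely this uniform-in-$k$ bipartite input (Theorem~\ref{thm:loglogN}), which replaces the exponential-in-$d$ loss by a quasi-polynomial one. Without it, the reduction to $D_{\le K}(n)$ cannot deliver the stated exponent.
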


Next, we consider the function $f(x)$. We provide a significant improvement on the $x^{2/3+o(1)}$ upper bound by B\'{e}rczes, Dujella, Hajdu, and Luca \cite{BDHL11} in the following corollary. The corollary follows from~\cref{thm:infty} immediately. 

\begin{cor}\label{cor:f}
There is an absolute constant $L$ such that
$$
f(x)\leq \widetilde{f}(x)\ll \exp\big(L(\log \log x)^2\big).
$$    
\end{cor}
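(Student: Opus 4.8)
The plan is to obtain the bound as an immediate consequence of \cref{thm:infty}; the corollary is really just bookkeeping, since all the substantive work has already been done in that theorem.

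First I would dispose of the trivial inequality $f(x)\leq\widetilde{f}(x)$: any set $A\subseteq[1,x]\cap\N$ witnessing the value of $f(x)$ has property $D_{\leq\infty}(n)$ for some $1\leq n\leq x$, hence for some integer $n$ with $1\leq|n|\leq x$, so it is also admissible in the definition of $\widetilde{f}(x)$.

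For the main estimate, fix $x$ large, let $A\subseteq[1,x]\cap\N$ attain the value $\widetilde{f}(x)$, say with property $D_{\leq\infty}(n)$ for some integer $n$ with $1\leq|n|\leq x$, and apply \cref{thm:infty} with $N=\lfloor x\rfloor$. This gives
\[
\widetilde{f}(x)=|A|\ll \exp\big(L(\log\log|n|)^2\big)+\log N\leq \exp\big(L(\log\log x)^2\big)+\log x,
\]
where in the last step I use $|n|\leq x$, $N\leq x$, and the monotonicity of $t\mapsto\exp(L(\log\log t)^2)$ for $t$ large.

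It then remains to check that the $\log x$ term is negligible: writing $\log x=\exp(\log\log x)$, we have $L(\log\log x)^2\geq\log\log x$ as soon as $\log\log x\geq 1/L$, so $\log x\leq\exp(L(\log\log x)^2)$ for all sufficiently large $x$, and the finitely many remaining values of $x$ are absorbed into the implied constant. Combining this with the displayed inequality yields $\widetilde{f}(x)\ll\exp(L(\log\log x)^2)$, as desired. The only potential obstacle is this last elementary comparison, and even that is routine, so the real content of the corollary lies entirely in \cref{thm:infty}.
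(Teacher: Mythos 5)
Your proof is correct and takes the same route the paper intends: the authors simply state that the corollary ``follows from Theorem~\ref{thm:infty} immediately,'' and you have written out the routine bookkeeping (the trivial inequality $f\leq\widetilde f$, the substitution $N=\lfloor x\rfloor$, monotonicity in $|n|\leq x$, and absorbing the $\log x$ term into $\exp(L(\log\log x)^2)$ for large $x$).
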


In the next subsection, we provide an overview of our proof of Theorem~\ref{thm:Vd} and Theorem~\ref{thm:infty}, give some additional backgrounds, and state our additional contributions.

\subsection{Improved bounds on bipartite Diophantine tuples}\label{subsec:bipartite}
Tools from graph theory have been applied frequently to connect the property $D_k(n)$ and the property $D_{\leq d}(n)$. To see a quick connection, recall that the \emph{(multi-colored) Ramsey number} $R(m_1, m_2, \ldots, m_s)$ is the smallest positive integer $R$ such that any coloring of the edges of the complete graph on $R$ vertices with $s$ colors results in the existence of an $1\leq i \leq s$, such that there exists a complete monochromatic subgraph in the $i$-th color with $m_i$ vertices. Let $2\leq d<\infty$, and let $p_1<p_2<\ldots<p_{\ell}$ be all the primes at most $d$. If $n$ is a nonzero integer, then we have 
\begin{equation}\label{eq:Ramsey}
M_{\leq d}(n)\leq R(M_{p_1}(n), M_{p_2}(n), \ldots, M_{p_\ell}(n)).    
\end{equation}
Indeed, if $A$ is a Diophantine tuple with property $M_{\leq d}(n)$, then we can build a complete graph with vertex set $A$. For each edge $ab$ in the graph, we color it with color $i$ if $i$ is the smallest number such that $ab+n$ is a perfect $p_i$-th power. This simple observation immediately shows that $M_{\leq d}(n)$ is finite since we know each $M_{p_i}(n)\ll_{p_i} \log |n|$, except that the upper bound obtained from inequality~\eqref{eq:Ramsey} is potentially very large and far from the truth. 

Our proofs of Theorem~\ref{thm:Vd} and Theorem~\ref{thm:infty} are based on a novel combination of ideas from sieve methods, Diophantine approximation, and extremal graph theory. In particular, to prove these two theorems, it is natural to study the following question related to a ``robust version" of Diophantine tuples, which is also of independent interest.

\begin{question}\label{q}
Let $\delta \in (0,1]$ be a real number. Let $k\geq 2$ and let $n$ be a nonzero integer. Suppose that $X$ is a subset of positive integers and among all subsets $\{a,b\}$ of $X$ with size $2$, there are at least $\delta \binom{|X|}{2}$ pairs of them such that $ab+n$ is a perfect $k$-th power. Can we give an upper bound on $|X|$?   
\end{question}

Note that when $\delta=1$, a set $A$ satisfying the condition in the question above is a Diophantine tuple with property $D_k(n)$. Also note that if $N$ is a sufficiently large integer and $A \subseteq [1,N]$ is a Diophantine tuple with property $D_{\leq \infty}(n)$ for some $1\leq |n|\leq N$, then there is a prime $k$ with $2\leq k \leq \log N$, such that $A$ satisfies the assumption in Question~\ref{q} with $\delta=\frac{1}{\log N}$. 

To answer Question~\ref{q}, we take advantage of tools from extremal graph theory; see Section~\ref{subsec:exgraph}. In particular, in view of the K\"ov\'ari--S\'os--Tur\'an theorem (Lemma~\ref{lem:KST}), it suffices to analyze the bipartite variant of Diophantine tuples; see Remark~\ref{rem:q}. 
Bipartite Diophantine tuples were recently introduced by the second author \cite{Y24, Y26} in connection with the work of Hajdu and S\'{a}rk\"{o}zy on the multiplicative decompositions of a small perturbation of the set of shifted $k$-th powers~\cite{HS18, HS18b, HS20}. However, the same objects have been studied, for example by Gyarmati \cite{G01}, Bugeaud and Dujella \cite{BD03}, and Bugeaud and Gyarmati \cite{BG04}, two decades ago. More precisely, following \cite{Y24}, for each $k \ge 2$ and each nonzero integer $n$, we call a pair of sets $(A, B)$ a \textit{bipartite Diophantine tuple with property $BD_{k}(n)$} if $A, B$ are two subsets of $\N$ with size at least $2$, such that $ab+n$ is a $k$-th power for each $a \in A$ and $b \in B$. From the viewpoint of number theory, such bipartite variants of Diophantine tuples are also natural and useful local objects to study, since forbidden local structure often helps us to understand the global structure. 

Generally speaking, bipartite Diophantine tuples are much harder to study compared to Diophantine tuples, since being a Diophantine tuple imposes many more restrictions. For example, the quantities $M_2(1)$ and $M_2(-1)$ were well-studied \cite{D04, DF05} and eventually He, Togb\'e, and Ziegler \cite{HTZ19} proved that $M_2(1)=4$, and Bonciocat, Cipu, and Mignotte \cite{BCM22} proved that $M_2(-1)=3$. Nevertheless, it remains an open question to show that if $(A, B)$ is a bipartite Diophantine tuple with property $BD_{2}(1)$ (or $BD_{2}(-1)$, resp.), then $\min \{|A|,|B|\}$ is bounded by an absolute constant \cite{BHP25, BG04, Y26}; and the more general question for $BD_2(n)$ with $|n|\geq 2$ appears to be even harder. The second author \cite{Y24} studied the same question for $k\geq 3$. In particular, it was shown in \cite[Theorem 2.2]{Y24} that if $k\geq 3$ is fixed, then $\min \{|A|,|B|\}\ll \log |n|$ holds for all bipartite Diophantine tuples $(A, B)$ with property $BD_{k}(n)$. In the same paper, the following stronger result was proved. To state it, following \cite{Y24}, we define the following constants:
\begin{align*}
&r_3=9, \quad  r_4=6, \quad r_5=5, \quad \text{ and } \quad r_k=4 \quad \text{ for } k \geq 6;\\
&s_3=6, \quad  s_4=4, \quad s_5=3, \quad \text{ and } \quad s_k=2 \quad \text{ for } k \geq 6;
\end{align*}
$$t_3=\frac{15399}{938}, \quad t_4=\frac{34}{3}, \quad t_5=\frac{97}{23}, \quad t_6=\frac{29}{4}, \text{ and } \quad t_k=\frac{k^2+k-4}{k^2-6k+6} \quad \text{ for } k \geq 7.
$$
The above constants will appear at many places in the paper, and we will use them without any further mentioning.
\begin{externaltheorem}[{\cite[Theorem 2.3]{Y24}}]\label{thm:old}
Let $k\geq 3$ be fixed, and $n$ be a nonzero integer. Let $(A,B)$ be a bipartite Diophantine tuple with property $BD_{k}(n)$. Then we have:
\begin{enumerate}
    \item If $|A|\geq r_k$, then $|B|\leq |n|^{\frac{t_k}{k}+o(1)}$ as $|n|\to \infty$.
    \item If $|A|\geq \frac{\log \log |n|+3.3}{\log (k-1)}+8$, then $|B|\leq |n|^{\frac{1}{k-2}+o(1)}$ as $|n|\to \infty$.
\end{enumerate}
\end{externaltheorem}

The proof of \cref{thm:old} is based on a combination of tools from Diophantine approximation and sieve methods, and it relied on a result of Bourgain and Demeter \cite{BD18} on the number of $k$-th powers inside arithmetic progressions (see \cite[Proposition 4.6]{Y24}). We provide a substantial improvement on Theorem~\ref{thm:old} in the next two results. In our proof, we use a more sophisticated application of sieve methods (building on the refined finite field models in Section~\ref{sec:finitefield}) instead of considering $k$-th powers in arithmetic progressions. 

For positive integers $k$ and $m$, we define the following constant:
\begin{equation}\label{eq:theta}
\theta_{k,m}:=\sum_{\substack{1\leq i \leq k\\\gcd(i,k)=1}}\gcd(i-1,k)^m.    
\end{equation}

\begin{thm}\label{thm:thm1}
Let $k \geq 3$ be fixed. Let $n$ be a nonzero integer and  $(A,B)$ be a bipartite Diophantine tuple with property $BD_{k}(n)$. Then as $|n|\to \infty$, the following estimate holds uniformly over an integer $m \geq r_k-s_k$ and $m=o(\log \log |n|)$:
$$
|A|\geq m+s_k \implies |B|\leq |n|^{\frac{t_k\phi(k)}{\theta_{k,m}}+o(1)}.
$$
\end{thm}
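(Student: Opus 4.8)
The plan is to run the Diophantine-approximation engine behind \cref{thm:old} from \cite{Y24} — the source of the constants $r_k,s_k,t_k$ — but to feed the $m$ extra elements of $A$ into it through the refined finite field models of \cref{sec:finitefield} together with a sieve, in place of the bound on $k$-th powers in arithmetic progressions (via \cite{BD18}) used in \cite{Y24}; this is what converts those $m$ elements into a factor of $\theta_{k,m}/\phi(k)$ in the exponent. After passing to a subset we may assume $A=\{a_0<a_1<\cdots<a_{m+s_k-1}\}$ has exactly $m+s_k$ elements; regard $A_1:=\{a_0,\dots,a_{s_k-1}\}$ as a \emph{core} and $A_2:=\{a_{s_k},\dots,a_{s_k+m-1}\}$ as a \emph{gain set}, and for $a\in A$, $b\in B$ write $ab+n=y_{a,b}^{\,k}$ with $y_{a,b}\in\Z_{\ge 0}$.

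The first step reproduces the mechanism behind \cref{thm:old}. Eliminating $b$ from $ab+n=y_{a,b}^k$ and $a'b+n=y_{a',b}^k$ gives $a'y_{a,b}^{k}-a\,y_{a',b}^{k}=(a'-a)n$, so $y_{a,b}/y_{a',b}$ approximates $(a/a')^{1/k}$ with an error involving $|n|$ and the $a_i$. A gap principle among these approximations — using the core and the multiplicative relations between the algebraic numbers $(a_i/a_j)^{1/k}$, which is where the core size $s_k$, the threshold $r_k$, and the exponent $t_k$ enter, and which cancels the a priori harmful dependence on the (possibly enormous) $a_i$ — shows, as in \cite{Y24}, that apart from $O(\log\log|n|)$ exceptional $b$ one has $y_{a_0,b}\le T:=|n|^{t_k/k+o(1)}$. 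Since $b\mapsto y_{a_0,b}$ is injective and the exceptional $b$ contribute only $|n|^{o(1)}$, it remains to bound
\[
\#\Bigl\{\,y_0\le T\ :\ P_i(y_0):=\tfrac{a_iy_0^{k}-(a_i-a_0)n}{a_0}\ \text{is a perfect $k$-th power for every }a_i\in A_2\,\Bigr\},
\]
since each such $y_0$ corresponds to the element $b=(y_0^{k}-n)/a_0$ of $B$.

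The second step estimates this count as $\ll T^{\,k\phi(k)/\theta_{k,m}+o(1)}=|n|^{t_k\phi(k)/\theta_{k,m}+o(1)}$. For a prime $p$, the condition that $P_i(y_0)$ be a $k$-th power modulo $p$ restricts $y_0$ to a set of density $\approx d_p^{-m}$, where $d_p:=\gcd(k,p-1)$ (this comes from multiplicative characters of order $d_p$ and the Weil bound, valid once the $P_i$ are independent modulo $d_p$-th powers). Since primes are equidistributed among the $\phi(k)$ reduced residue classes modulo $k$, and $d_p=\gcd(i-1,k)$ for $p\equiv i\pmod k$, the average of $d_p^{m}$ over primes is
\[
\frac{1}{\phi(k)}\sum_{\substack{1\le i\le k\\\gcd(i,k)=1}}\gcd(i-1,k)^{m}=\frac{\theta_{k,m}}{\phi(k)},
\]
and by Burnside's lemma this is exactly the number of orbits of $(\Z/k\Z)^\times$ acting diagonally on $(\Z/k\Z)^m$ — equivalently, the number of $\mathbb{Q}$-irreducible components of the simultaneous-superelliptic curve $\{z_i^k=P_i(y_0):a_i\in A_2\}$. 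Combining these local densities through the refined finite field model of \cref{sec:finitefield} and a determinant-method point count on that curve (using that it contains no line, since $a_i\ne a_0$ and $n\ne0$, to exclude the degenerate low-degree components) yields the claimed bound, hence $|B|\ll|n|^{t_k\phi(k)/\theta_{k,m}+o(1)}$. This refines \cref{thm:old}(1) because $\theta_{k,m}\ge k^m\ge k\phi(k)$ for every $m\ge 2$ and $m\ge r_k-s_k\ge 2$ throughout.

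The main obstacle is to make the second step precise and uniform in $m$, which is exactly where the hypothesis $m=o(\log\log|n|)$ is spent. One must (i) control the Weil error terms after multiplying the local densities over many primes, and re-analyze the geometry when the $a_i$ are special (say $a_i/a_0$ a perfect power), so that the orbit count $\theta_{k,m}/\phi(k)$ still governs the exponent; (ii) beat the $T^{1/2}$ barrier of a classical large sieve — a naive combinatorial sieve has an error term that blows up super-polynomially once $m=o(\log\log|n|)$, so the determinant-method flavour of \cref{sec:finitefield} is genuinely needed to reach the stated exponent; and (iii) absorb all the $o(1)$'s — from the gap principle, the exceptional $b$, the Weil errors, and the $k^{O(m)}$ character terms — into a single uniform $|n|^{o(1)}$, which is possible precisely because $\theta_{k,m}/\phi(k)\le k^m=|n|^{o(1)}$ when $m=o(\log\log|n|)$. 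This uniform-in-$m$ point count is the crux.
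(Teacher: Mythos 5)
Your high-level outline — use Diophantine approximation (the source of $r_k,s_k,t_k$) to reduce to a bounded range, then sieve with local character-sum input averaged over residue classes $i \pmod k$ to produce $\theta_{k,m}/\phi(k)$ in the exponent — matches the paper's strategy, and your local density heuristic (each prime $p$ with $d_p=\gcd(k,p-1)$ cuts the admissible residues to roughly $p/d_p^m$, via Weil, and averaging $d_p^m$ over primes gives $\theta_{k,m}/\phi(k)$) is exactly the content of Lemma~\ref{lem:Weil} in the paper. The paper even uses the same reduction to $A,B\subseteq[1,2|n|^{t_k}]$ via Lemma~\ref{lem:large} rather than re-deriving a gap principle, so your first step is a legitimate (if slightly more laborious) route to the same place.

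The genuine gap is in how you assemble the local data into a global count. You repeatedly assert that what is needed is a ``determinant-method point count'' on the simultaneous superelliptic curve $\{z_i^k=P_i(y_0)\}$, and that this is what \cref{sec:finitefield} supplies. Neither claim is right: \cref{sec:finitefield} contains only character-sum estimates (Vinogradov, Weil, a Karatsuba variant) that control $|A_p|,|B_p|$ over $\F_p$, not any determinant method, and the paper does not count points on a curve at all. The tool that converts the per-prime densities into the exponent $\phi(k)/\theta_{k,m}$ is the Croot--Elsholtz variant of Gallagher's \emph{larger sieve} (Lemma~\ref{lem:GS2}). With $Q=N^{\phi(k)/\theta_{k,m}}$ and $N=2|n|^{t_k}$, one sieves $B'=B\cap[1,N]$ over primes $p\leq Q$: the identity $\sum_{p\leq Q}\log p/p=\log Q+O(1)$ together with $\sum_{p\leq Q}\log p/|B_p|\geq(1+o(1))(\theta_{k,m}/\phi(k))\log Q$ (from Lemma~\ref{lem:Weil} and the prime number theorem in progressions mod $k$) feeds directly into Lemma~\ref{lem:GS2} to yield $|B'|\ll Q\,N^{o(1)}=|n|^{t_k\phi(k)/\theta_{k,m}+o(1)}$. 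Your worry that ``a naive combinatorial sieve has an error term that blows up'' is addressed precisely by the larger sieve, whose whole point is to exploit the situation where each prime forbids almost all residues; a classical large sieve or a combinatorial sieve would indeed fail here. The issues you flag in (i)--(iii) (the bad primes where residues collapse, the $k^{O(m)}$ error terms, the uniform $o(1)$) are all handled in the paper by removing the small sets $\mathcal{P}_1,\mathcal{P}_2,\mathcal{P}_3$ of exceptional primes before sieving and by the hypothesis $m=o(\log\log|n|)$ entering via $mk^{2m}=o(\log Q)$; nothing geometric about the superelliptic curve is needed. Without the larger sieve you cannot complete the argument, so as written the proposal has the right local ingredients but no mechanism to combine them.
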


Let $(A,B)$ be a bipartite Diophantine tuple with property $BD_{k}(n)$, where $k\geq 3$ and $n\neq 0$. Let us compare Theorem~\ref{thm:old}(1) and Theorem~\ref{thm:thm1}.
Since $\gcd(i-1,k)=1\geq 1$ for all $i$ and $\gcd(0,k)=k$, we have
$\theta_{k,m}\geq k^m+\phi(k)-1.$
Thus, if $|A|\geq r_k$, by setting $m=r_k-s_k$, Theorem~\ref{thm:thm1} implies that 
\begin{equation}\label{eq:con1}    
|B|\leq |n|^{\frac{t_k \phi(k)}{k^2+\phi(k)-1}+o(1)},
\end{equation}
always improving Theorem~\ref{thm:old}(1). Indeed, if $k$ is a prime, then inequality~\eqref{eq:con1} becomes
$$
|B|\leq |n|^{\frac{t_k (k-1)}{k^2+k-2}+o(1)}=|n|^{\frac{t_k}{k+2}+o(1)};
$$
when $k$ is composite, we get an even better exponent. When $|A|\gg \log \log |n|$, Theorem~\ref{thm:thm1} shows that $|B|=|n|^{o(1)}$, which improves Theorem~\ref{thm:old}(2). Our next result provides further refinement in this setting. 

\begin{thm}\label{thm:loglogN2}
Let $k\geq 3$ be fixed, and $n$ be a nonzero integer with $|n|$ sufficiently large. Assume that $(A,B)$ is a bipartite Diophantine tuple with property $BD_{k}(n)$. Then there are two constants $C_1,C_2$ only depending on $k$, such that 
\[\text{either} \quad |A|\leq C_1 \log \log |n|, \qquad \text{or} \quad |B|\leq C_2 \log |n| \cdot (\log \log |n|)^2.
\]
\end{thm}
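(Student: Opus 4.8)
The hypothesis $|A|\gg\log\log|n|$ is much stronger than the hypothesis $|A|\ge r_k$ of \cref{thm:thm1}, which already yields $|B|=|n|^{o(1)}$ in this range; the content of \cref{thm:loglogN2} is to sharpen $|n|^{o(1)}$ all the way down to a polylogarithmic bound. The plan is to partition $B$ and treat the two pieces by completely different tools. We may assume $|A|>C_1\log\log|n|$. Fix distinct $a_1<a_2<\dots<a_r$ in $A$ with $r=\lceil C_1\log\log|n|\rceil$, and put
$$B_{\mathrm{lrg}}=B\cap(|n|^{C},\infty),\qquad B_{\mathrm{sm}}=B\cap[1,|n|^{C}],$$
for a sufficiently large constant $C=C(k)$. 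It then suffices to show $|B_{\mathrm{lrg}}|\ll_k 1$ and $|B_{\mathrm{sm}}|\ll_k \log|n|\cdot(\log\log|n|)^2$.

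For the large part I would use Diophantine approximation. For $b\in B$, writing $a_1b+n=y^k$ and $a_2b+n=z^k$ produces the Thue relation $a_2y^k-a_1z^k=(a_2-a_1)n$; equivalently, $y/z$ is a rational approximation to the algebraic number $(a_1/a_2)^{1/k}$ with error $O_k(|n|\,z^{-k})$, which once $b$ exceeds a fixed power of $|n|$ is far better than $z^{-2-\varepsilon}$. Distinct $b$ give distinct approximants $y/z$, so a uniform bound on the number of large solutions of a Thue equation of degree $k\ge3$ — equivalently, a quantitative form of Roth's theorem, whose count of exceptionally good approximations to an algebraic number of bounded degree does not depend on that number — gives $|B_{\mathrm{lrg}}|\ll_k 1$. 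Two situations require separate handling: when $(a_1/a_2)^{1/k}$ is rational, and when $a_2/a_1$ is so large that the approximation is not yet good at the threshold $|n|^{C}$; in the latter case one replaces $(a_1,a_2)$ by a better-chosen pair of comparable size from $A$ (or enlarges $C$), also using the elementary gap principle $a_i(b'-b)=(y_i')^k-y_i^k\ge k\,y_i^{k-1}$ for consecutive $b<b'$ in $B$.

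For the small part I would run a sieve over dyadic blocks. Partition $[1,|n|^{C}]$ into $O(\log|n|)$ dyadic blocks $[N,2N]$; it is enough to prove $|B\cap[N,2N]|\ll(\log\log|n|)^2$ on each. The key input is a finite field model: for a prime $p$ with $g:=\gcd(k,p-1)\ge2$, with $p\nmid \prod_i a_i\cdot\prod_{i<j}(a_i-a_j)$, and with $p$ large enough in terms of $r$, the number of residues $b\bmod p$ for which $a_ib+n$ is a $k$-th power residue for every $i$ equals $\tfrac{p}{g^{r}}+O(r\sqrt p)$, and is therefore $\ll p\,2^{-r}$; this follows by expanding the product of the indicator functions into multiplicative character sums and invoking Weil's bound for each, none of which degenerates because the linear forms $a_ib+n$ are pairwise non-proportional. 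Feeding these admissible residue sets into the large sieve inequality over a suitable window of primes, together with the crude gap principle $|B\cap[N,2N]|\ll(a_1N)^{1/k}$ obtained as above, should deliver the per-block bound; summing over the $O(\log|n|)$ blocks gives the claim. This is precisely where the refined finite field models of Section~\ref{sec:finitefield} enter.

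The hard part will be the sieve in the small part. Because $r=|A|$ grows with $|n|$, the saving $2^{-r}$ in the finite field model is meaningful only for primes exceeding a threshold of the form $4^{r}\asymp(\log|n|)^{O_k(1)}$ (up to a polynomial factor in $r$), and one must discard every prime dividing some difference $a_i-a_j$, a set that can be large exactly when the elements of $A$ are large. The crux is to show that enough primes survive — or, in the extreme case where $A$ consists of very large integers and essentially no prime is usable, that the gap principles and the doubly-exponential growth of large Thue solutions already force $|B|$ to be small — and then to balance the loss of the discarded primes against the sieving power. It is this balancing that produces the factor $(\log\log|n|)^2$ rather than something smaller and that confines the method to the range $|A|\gg\log\log|n|$; keeping all implied constants uniform as $r\to\infty$ is the reason for developing the refined finite field models of Section~\ref{sec:finitefield}.
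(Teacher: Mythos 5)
Your overall plan is genuinely different from the paper's, and it has a fatal quantitative obstruction in the ``small part''. The paper's proof is an \emph{inverse sieve} argument (due in this setting to Elsholtz): set $\nu \asymp \log\log N$ with $N = 2|n|^{17}$, take $\mathcal{P}=\{p\le Q: p\equiv 1\pmod k,\ p\nmid n\}$ with $Q\asymp(\nu\,\phi(k)\log N)^2$, and split $\mathcal{P}$ into the primes $\mathcal{P}_1$ for which $|A_p|\ge 2\nu p^{1/2\nu}$ and the complementary set $\mathcal{P}_2$. The Karatsuba-type asymmetric character sum estimate (\cref{cor:Kb}) forces $|B_p|\le 12\nu\sqrt{p}$ for $p\in\mathcal{P}_1$, so if $\mathcal{P}_1$ carries more than half the prime mass, Gallagher's larger sieve on $B$ gives $|B|\ll_k \log|n|(\log\log|n|)^2$; and if $\mathcal{P}_2$ carries more than half, the same sieve on $A$ (using $|A_p|<2\nu p^{1/2\nu}$ directly as the residue bound) gives $|A|\ll_k\log\log|n|$. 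The dichotomy in the theorem's conclusion is \emph{discovered} by the sieve, not assumed; there is never any attempt to make $A$ inject modulo $p$.

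Your proposal instead assumes $|A|>C_1\log\log|n|$ and tries to sieve $B$ using a full Weil-bound estimate of the form $|B_p|\le p/g^r+O(r\sqrt p)$ with $r=|A|$ and $g=\gcd(k,p-1)$. This cannot be made to work with $r\asymp\log\log|n|$: for the main term to beat the error term you need $\sqrt{p}\gg r\,g^{r}$, i.e.\ $p\gtrsim r^2 g^{2r}\asymp(\log\log|n|)^2(\log|n|)^{2C_1\log g}$. So the sieving window must have size $Q\gtrsim(\log|n|)^{2C_1\log g}$, and Gallagher's sieve then returns $|B|\ll Q/(g^{r}\log Q)\asymp(\log|n|)^{2C_1\log g - C_1\log g}/\log\log|n| = (\log|n|)^{C_1\log g}/\log\log|n|$. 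The exponent grows with $C_1$, whereas the theorem requires a constant $C_2$ depending only on $k$; since one cannot fix $C_1$ in advance (it is part of what the dichotomy produces), the approach is self-defeating. The escape route in the paper is precisely that \cref{cor:Kb} demands only $|A_p|\ge 2\nu p^{1/2\nu}=O_k(\nu)$ — far less than $|A_p|=r$ — and yet already yields a $\sqrt{p}/\nu$ saving independent of $r$, so the good primes start at height $(\log|n|)^{O_k(1)}$ rather than $(\log|n|)^{O(r)}$. You gesture at the right difficulty (``it is this balancing that produces the factor''), but you do not resolve it, and the tool you propose (Weil with density $g^{-r}$) is structurally the wrong one; the Karatsuba estimate and the two-sided sieve on $A$ and $B$ simultaneously are the missing ideas.

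A secondary comment: the paper handles the ``large'' elements with \cref{lem:large}, which after discarding at most $s_k$ elements from each of $A,B$ reduces to $A,B\subseteq[1,2|n|^{t_k}]\subseteq[1,2|n|^{17}]$, rather than via Thue/Roth counting. Your proposed quantitative Roth argument for $|B_{\mathrm{lrg}}|\ll_k 1$ is not implausible but is also not what the paper does, and (as you concede) needs extra cases; given the problems with the small part, it is moot.
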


As pointed out by a referee, Theorem~\ref{thm:loglogN2} can be viewed as a multiplicative analogue of a recent result of Elsholtz and Wurzinger \cite[Theorem 1.2]{EW24} on sumsets contained in the set of $k$-th powers.

For our applications to Theorem~\ref{thm:infty}, the following uniform version of Theorem~\ref{thm:loglogN2} will be a key ingredient.

\begin{thm}\label{thm:loglogN}
Let $k\geq 2$ and $n,N$ be integers such that $1\leq |n|\leq N$ and $k\leq 5\log N$. Assume that $A,B \subseteq \{1,2,\ldots, N\}$ such that $(A,B)$ is a bipartite Diophantine tuple with property $BD_{k}(n)$. Then there are two absolute constants $L_1,L_2$ such that either $|A|\leq L_1 \log \log N$ or $|B|\leq (\log N)^{L_2}$. 
\end{thm}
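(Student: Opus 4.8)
The plan is to run a large-sieve-type argument on $B$, using many elements of $A$ as ``coefficients'', after disposing of large $k$ by a trivial counting bound. Throughout we may assume $N$ is large (otherwise $|A|,|B|\le N$ is already absolutely bounded), and it suffices to show: if $|A|>L_1\log\log N$ then $|B|\le(\log N)^{L_2}$. First, fixing any $a\in A$, the map $b\mapsto ab+n$ is injective on $B$, so $|B|$ is at most the number of (signed) perfect $k$-th powers in $[-N,\,N^2+N]$, which is $\ll N^{2/k}$. Hence if $k\ge\log N$ — in particular for every $k\in[\log N,\,5\log N]$ — we get $|B|\ll N^{2/\log N}\ll 1$ and are done. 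So from now on $2\le k\le\log N$.

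\textbf{The sieve setup.} Assume $|A|>L_1\log\log N$. Let $\ell$ be the smallest prime divisor of $k$ (so $2\le\ell\le k\le\log N$) and fix $m:=\lceil 2\log\log N/\log\ell\rceil$ distinct elements $a_1,\dots,a_m\in A$; this is possible since $m\le 2\log\log N/\log 2+1<L_1\log\log N$ once $L_1=3$. For every prime $p$ and every $b\in B$, the residue $b\bmod p$ must lie in
\[
S_p:=\{\,c\bmod p:\ a_ic+n\ \text{is a $k$-th power residue or}\ \equiv 0\pmod p,\ i=1,\dots,m\,\}.
\]
Writing $d_p:=\gcd(k,p-1)$ (so the $k$-th power residues mod $p$ are precisely the $d_p$-th power residues), the refined finite field models of Section~\ref{sec:finitefield} — expanding $\mathbbm 1[\,\cdot\,\text{is a $d_p$-th power}]$ into multiplicative characters and applying Weil's bound — give, for every $p$ not dividing $n$, any $a_i$, or any $a_i-a_j$,
\[
|S_p|\;=\;\frac{p}{d_p^{\,m}}\;+\;O\!\big(m\sqrt p\big)\;+\;O(3^m),
\]
the point being that $\prod_i(a_ix+n)^{e_i}$ is never a perfect $d_p$-th power in $\mathbb F_p[x]$ unless all $e_i\equiv0$. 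Restricting to primes $p\equiv1\pmod\ell$ (so $\ell\mid d_p$) with $p\ge P_0$ for a suitable $P_0=(\log N)^{O(1)}$, one gets $|S_p|\le 2p/\ell^{\,m}$. Letting $\mathcal P$ be the set of such primes in $[P_0,Q]$ with $Q:=(\log N)^C$ (discarding the $O(m^2\log N)$ excluded primes, which is negligible), Gallagher's larger sieve gives
\[
|B|\;\le\;\frac{\sum_{p\in\mathcal P}\log p-\log N}{\sum_{p\in\mathcal P}\frac{\log p}{|S_p|}-\log N}\;\le\;\frac{2Q}{\ \tfrac12\ell^{\,m}\sum_{p\in\mathcal P}\frac{\log p}{p}\;-\;\log N\ }.
\]
Since $m\ge 2\log\log N/\log\ell$ forces $\ell^{\,m}\ge(\log N)^2$, once we know $\sum_{p\in\mathcal P}\frac{\log p}{p}\gg \frac1{\phi(\ell)}\log\log N$ the denominator exceeds $\log N$, whence $|B|\ll(\log N)^{C-1}$ and we may take $L_2=C$.

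\textbf{The main obstacle.} Everything above is routine except the lower bound $\sum_{p\in\mathcal P}\frac{\log p}{p}\gg\frac{\log\log N}{\phi(\ell)}$, uniformly in $\ell$. When $\ell$ is bounded this is Mertens' theorem in arithmetic progressions; but $\ell$ can be as large as $\log N$, while $Q=(\log N)^C$ lies far below the range where the prime number theorem modulo $\ell$ is classically available (Siegel--Walfisz fails for such large moduli relative to $Q$). The resolution is to invoke a \emph{quantitative form of Linnik's theorem}: the least prime $\equiv1\pmod\ell$ is $\ll\ell^{L}$, together with the matching lower bound $\#\{p\le x:\ p\equiv1\ (\mathrm{mod}\ \ell)\}\gg x/(\phi(\ell)\log x)$ once $x\ge\ell^{L}$, where $L$ is an absolute constant — this is exactly where the constant $L$ (and hence $L_2=C=C(L)$) in the statement comes from. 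Since $\ell\le\log N$ gives $\ell^{L}\le(\log N)^{L}$, choosing $C$ a suitable multiple of $L$ makes $Q=(\log N)^{C}$ large enough that a partial-summation argument yields the required bound on $\sum_{p\in\mathcal P}\frac{\log p}{p}$; one must also check that the sieve can be started from $\max(P_0,\ell^{L})$ in both the small-$\ell$ (Mertens) and large-$\ell$ (Linnik) regimes and that the two patch together cleanly. This patching, and making the character-sum error term $O(m\sqrt p)+O(3^m)$ uniformly negligible over the whole range $2\le k\le\log N$ (equivalently $m=O(\log\log N)$), are the only genuinely delicate points.
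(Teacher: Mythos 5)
Your proposal takes a genuinely different route from the paper's. The paper's proof of this theorem is an ``inverse sieve'' argument in the spirit of Elsholtz: it never picks a specific finite subset of $A$, but instead uses Karatsuba's asymmetric double character sum estimate (\cref{cor:Kb}) to show, for each prime $p$, that either $|A_p|$ is small or $|B_p|\le 12\nu\sqrt{p}$, and then dichotomizes on whether the ``$A_p$ small'' or ``$B_p$ small'' primes dominate in $\sum\log p$, applying Gallagher's sieve to $A$ or to $B$ accordingly. Your approach fixes $m$ specific elements $a_1,\dots,a_m\in A$, bounds the set of admissible residues $S_p$ for $b\bmod p$ via Weil (essentially \cref{lem:Weil}), and sieves directly on $B$; this is closer in spirit to the paper's proof of \cref{thm:thm1} than to its proof of \cref{thm:loglogN}. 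Your initial reduction to $k\le\log N$ via a trivial count of $k$-th powers is clean and not needed in the paper's argument.

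However, there is a genuine gap in the part you yourself flag as the ``main obstacle''. You quote a quantitative Linnik bound of the shape $\#\{p\le x: p\equiv 1\ (\mathrm{mod}\ \ell)\}\gg x/(\phi(\ell)\log x)$ once $x\ge\ell^L$, i.e.\ full density. The unconditional quantitative Linnik theorem (what the paper cites as \cref{lem:Linnik}, from \cite[Cor.\ 18.8]{IK04}) only gives $\psi(x;\ell,1)\gg x/(\phi(\ell)\sqrt{\ell})$, with an extra $\sqrt{\ell}$ loss, and that loss is precisely why the paper's choice of $Q$ carries a $\sqrt{k}$ factor. With the correct Linnik bound and your choice $m=\lceil 2\log\log N/\log\ell\rceil$, take the extremal case $\ell\asymp\log N$ (so $m=2$): then $\ell^m\asymp(\log N)^2$ while $\sum_{p\in\mathcal P}\frac{\log p}{p}\gg \frac{c}{\phi(\ell)\sqrt{\ell}}\asymp(\log N)^{-3/2}$, so $\ell^m\sum\frac{\log p}{p}\asymp(\log N)^{1/2}$, which is \emph{not} $\gg\log N$; the denominator $\frac12\ell^m\sum\frac{\log p}{p}-\log N$ in your application of Gallagher's sieve is negative and the bound collapses. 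The idea is salvageable by strengthening the exponent in $m$ (e.g.\ $m=\lceil 3\log\log N/\log\ell\rceil$, hence a correspondingly larger $L_1$) so that $\ell^{m}$ beats $\phi(\ell)\sqrt{\ell}\log N$ uniformly; but as written, the constants are wrong and the argument does not close.
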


\subsection{Conditional results}\label{subsec:cond}
In this section, we state our new conditional upper bounds on various notions of Diophantine tuples discussed earlier. 

There are many conjectures related to sums of perfect powers, and they are often useful in Diophantine questions. The following conjecture, due to Lander, Parkin, and Selfridge \cite{LPS67}, is widely believed.

\begin{conj}[Lander--Parkin--Selfridge conjecture]\label{conj:LPS}
Let $m,n,k$ be positive integers. If $\sum _{i=1}^{n}a_{i}^{k}=\sum _{j=1}^{m}b_{j}^{k},$ where $a_1,a_2, \ldots, a_n, b_1, b_2, \ldots, b_m$ are positive integers such that $a_i \neq b_j$ for all $1\leq i \leq n$ and $1\leq j \leq m$, then $m+n \geq k$.
\end{conj}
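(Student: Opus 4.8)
The Lander--Parkin--Selfridge conjecture is a major open problem, and for $k\geq 5$ no unconditional proof is known; accordingly, I would aim for a reduction to standard conjectures in Diophantine geometry, in the spirit of the conditional results of this paper. The plan is to reformulate it geometrically. Fix $k$, and suppose toward a contradiction that there is an admissible solution with $m+n\leq k-1$. The projective hypersurface
$$X_{m,n,k}\colon\quad \sum_{i=1}^{n}a_i^k=\sum_{j=1}^{m}b_j^k\ \subseteq\ \mathbb{P}^{m+n-1}$$
is smooth (its gradient vanishes only at the origin in characteristic $0$), and by adjunction its canonical class is $\mathcal{O}(k-(m+n))$ restricted to $X_{m,n,k}$. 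This is ample exactly when $m+n\leq k-1$, so the range predicted by the conjecture is precisely the range in which $X_{m,n,k}$ is of general type (it is Fano when $m+n\geq k+1$ and Calabi--Yau when $m+n=k$, where indeed the bound is attained, e.g. $27^5+84^5+110^5+133^5=144^5$).

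Granting the Bombieri--Lang conjecture, the rational points of $X_{m,n,k}$ then lie on a proper Zariski-closed subset $Z_{m,n,k}$. The argument would continue by a finite dissection: every irreducible component of $Z_{m,n,k}$ should either be contained in a coordinate hyperplane $\{a_i=0\}$ or $\{b_j=0\}$ --- ruled out since we seek positive integers --- or in a ``diagonal'' $\{a_i=b_j\}$ --- ruled out by the admissibility hypothesis $a_i\neq b_j$ --- or be a lower-dimensional linear section corresponding to a coincidence among the variables, after cancelling which one is left with a smaller instance $X_{m',n',k}$ with $m'+n'\leq k-1$, to be handled by induction on $m+n$ (admissibility being inherited by such subsolutions). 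The base cases are the unconditional ones: $m+n\leq 2$ is immediate from $a^k=b^k\Rightarrow a=b$, and $\{m,n\}=\{1,2\}$ is Fermat's Last Theorem for exponent $k$.

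The decisive obstacle is that this plan is conditional at two points that are themselves beyond current reach. First, the Bombieri--Lang conjecture is open, and the conclusion we need is \emph{uniform} over all $m,n$ with $m+n\leq k-1$ (and ideally over $k$ as well) --- exactly the kind of uniformity the paper already flags as inaccessible, and which the Caporaso--Harris--Mazur Uniformity Conjecture only captures for curves. Second, and more seriously, being of general type is a ``soft'' property: it tells us rational points are not dense, but it does not locate them, so ruling out \emph{sporadic} components of $Z_{m,n,k}$ --- positive-dimensional families, not contained in any diagonal or coordinate hyperplane, that could a priori carry admissible positive-integer points --- is not something the argument supplies, and is essentially the full strength of the conjecture. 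An unconditional attack via the Hardy--Littlewood circle method is consistent with the prediction --- the expected number of solutions in $[1,P]^{m+n}$ is of order $P^{m+n-k}$, hence $o(1)$ when $m+n<k$ --- but the circle method requires many variables and cannot reach the small values of $m+n$ at issue here; already $a^5+b^5+c^5=d^5$ and $a^5+b^5=c^5+d^5$ lie outside descent, modularity, and circle-method techniques. So I would not expect an unconditional proof, and would present the reduction above as the natural conditional route.
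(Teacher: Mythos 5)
This item is a \emph{conjecture}, not a theorem: the paper states the Lander--Parkin--Selfridge conjecture as an unproved hypothesis (it is only ever \emph{assumed}, in Theorems~\ref{thm:absolute}, \ref{thm:absolutedf}, and \ref{thm:LPS}), so there is no proof in the paper to compare against, and you were right not to manufacture one. Your assessment that the statement is open and out of reach is correct, and your geometric framing is accurate as far as it goes: the Fermat-type hypersurface $\sum a_i^k=\sum b_j^k$ in $\mathbb{P}^{m+n-1}$ is indeed smooth, adjunction gives $K_X=\mathcal{O}(k-(m+n))|_X$, and general type corresponds exactly to the range $m+n\leq k-1$ predicted by the conjecture (with the Lander--Parkin identity $27^5+84^5+110^5+133^5=144^5$ sitting on the Calabi--Yau boundary $m+n=k$). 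You also correctly identify the two fatal gaps in the conditional route: Bombieri--Lang only forces rational points into a proper closed subset $Z_{m,n,k}$, and nothing controls the sporadic positive-dimensional components of $Z_{m,n,k}$ that are not diagonals or coordinate hyperplanes --- handling those is essentially the full content of the conjecture, not a technicality, so the induction on $m+n$ you sketch cannot be closed even conditionally without further input. One small caution: the paper's conditional results do not rely on any such reduction; they invoke Conjecture~\ref{conj:LPS2} (the special case $k\geq 25$, $m=n=12$) as a black box, so your sketch should be read as motivation for why the hypothesis is believed, not as part of the paper's logical structure.
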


For our purpose, we need the following special case of Conjecture~\ref{conj:LPS}.
\begin{conj}[Lander--Parkin--Selfridge conjecture, special case]\label{conj:LPS2}
If $k\geq 25$ is an integer, then there do not exist distinct positive integers $a_1,b_1, \ldots, a_{12}, b_{12}$ such that $\sum_{i=1}^{12}a_{i}^{k}=\sum _{i=1}^{12}b_{i}^{k}.$   
\end{conj}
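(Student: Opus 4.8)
The plan is to deduce Conjecture~\ref{conj:LPS2} directly from Conjecture~\ref{conj:LPS} by a counting argument on the number of variables. Observe that in the hypothesis of Conjecture~\ref{conj:LPS2} we are given an equation $\sum_{i=1}^{12}a_i^k=\sum_{i=1}^{12}b_i^k$ with the $a_i$ and $b_i$ all distinct, so in particular $a_i\neq b_j$ for every pair $(i,j)$. This is precisely the shape required by Conjecture~\ref{conj:LPS} with $n=m=12$. Applying that conjecture yields $m+n=24\geq k$, i.e. $k\leq 24$. Contrapositively, for every $k\geq 25$ no such configuration of distinct positive integers can exist, which is exactly the statement of Conjecture~\ref{conj:LPS2}.

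First I would make the reduction explicit: given distinct positive integers $a_1,b_1,\ldots,a_{12},b_{12}$ with $\sum_{i=1}^{12}a_i^k=\sum_{i=1}^{12}b_i^k$, note that ``distinct'' here means the $24$ numbers $a_1,\ldots,a_{12},b_1,\ldots,b_{12}$ are pairwise distinct, so certainly $a_i\neq b_j$ whenever $1\le i,j\le 12$, verifying the non-degeneracy clause in Conjecture~\ref{conj:LPS}. One small point I would address is whether Conjecture~\ref{conj:LPS} as stated permits $a_i=a_{i'}$ or $b_j=b_{j'}$ for $i\neq i'$; the safe route, which I would take, is to invoke it only under the stronger hypothesis that all $a_i,b_j$ are distinct, under which the conclusion $m+n\ge k$ certainly applies. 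Then I would simply instantiate $n=m=12$ to get $24\ge k$, contradicting $k\ge 25$.

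In fact there is essentially no obstacle here — Conjecture~\ref{conj:LPS2} is a strict logical consequence of Conjecture~\ref{conj:LPS}, and the ``proof'' is the one-line substitution $n=m=12$. The only thing worth being careful about is the phrasing: since both statements are conjectures, what is really being proved is the \emph{implication} ``Conjecture~\ref{conj:LPS} $\implies$ Conjecture~\ref{conj:LPS2}'', and I would state it that way. If the paper instead wants Conjecture~\ref{conj:LPS2} recorded merely as a convenient self-contained special case to cite later (e.g.\ in Corollary~\ref{cor:absolute}), then no proof is strictly needed beyond the remark that it follows from Conjecture~\ref{conj:LPS} by taking $m=n=12$; I would include exactly that sentence.
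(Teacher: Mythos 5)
Your reduction is correct and matches what the paper intends: Conjecture~\ref{conj:LPS2} is simply the special case of Conjecture~\ref{conj:LPS} with $m=n=12$, where ``distinct'' a fortiori gives $a_i\neq b_j$, so LPS yields $24\geq k$, contradicting $k\geq 25$. The paper records it as a ``special case'' without spelling out this one-line instantiation, which is exactly what you do.
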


We remark that the results listed in this section still hold, at the cost of larger implied constants, if Conjecture~\ref{conj:LPS2} holds under the weaker assumption that $k\geq k_0$ for some fixed $k_0$.

Our next result assumes Conjecture~\ref{conj:LPS2}.

\begin{thm}\label{thm:absolute}
Assume Conjecture~\ref{conj:LPS2}. If $k \geq 25$, $n \neq 0$, and $(A,B)$ is a bipartite Diophantine tuple with property $BD_{k}(n)$, then $\max\{|A|,|B|\}\leq 21736$. In particular, $M_k(n)\leq 21738$ for all $k \geq 25$ and $n \neq 0$. 
\end{thm}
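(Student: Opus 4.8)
The plan is to establish the bipartite bound first and then deduce the bound on $M_k(n)$ by splitting off two elements. So let $(A,B)$ be a bipartite Diophantine tuple with property $BD_k(n)$, $k\geq 25$; by the symmetry of the defining condition it is enough to bound $|A|$, and I will argue by contradiction, assuming $|A|\geq 21737$. For $a\in A$, $b\in B$ write $ab+n=x_{a,b}^{k}$ with $x_{a,b}\in\N$. Two elementary identities organize the argument. First, for any $a\in A$ and any triple $b_1<b_2<b_3$ in $B$,
\[
(b_3-b_2)\,x_{a,b_1}^{k}-(b_3-b_1)\,x_{a,b_2}^{k}+(b_2-b_1)\,x_{a,b_3}^{k}=0,
\]
which just records that $b\mapsto x_{a,b}^{k}=ab+n$ is affine in $b$; and second, for a ``rectangle'' $a\neq a'$ in $A$, $b\neq b'$ in $B$,
\[
x_{a,b}^{k}+x_{a',b'}^{k}-x_{a,b'}^{k}-x_{a',b}^{k}=(a-a')(b-b').
\]
The second identity is the engine: if one can locate rectangles whose right-hand sides (call them the discrepancies) cancel after an appropriate choice of signs, then the associated $k$-th powers reassemble into a genuine equality $\sum u_i^{k}=\sum v_i^{k}$.

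The heart of the proof, and the step I expect to be hardest, is a localization step that makes the discrepancies live in an absolutely bounded set. I would feed $A$ and $B$ into the refined finite-field model of Section~\ref{sec:finitefield} together with the sieve and extremal-graph machinery behind Theorem~\ref{thm:thm1} --- now run for a fixed $n$ rather than in the limit $|n|\to\infty$ --- to extract a subset $A_0\subseteq A$ that is still large in absolute terms, together with a bounded set of elements of $B$, such that every discrepancy $(a-a')(b-b')$ arising from these elements lies in an absolutely bounded set $\mathcal D$. This is the reason the proof cannot simply quote the theory of Thue equations: the equation $b_2X^{k}-b_1Y^{k}=n(b_2-b_1)$ solved by $(x_{a,b_1},x_{a,b_2})$ for each $a\in A$ can have many solutions --- the count grows with the number of prime divisors of $n$ --- so any conclusion uniform in $n$ has to be squeezed out through the $D_k(n)$-machinery and the K\"ovari--S\'os--Tur\'an theorem (Lemma~\ref{lem:KST}) rather than from Diophantine approximation alone.

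With the discrepancies confined to $\mathcal D$, the remaining steps are combinatorial, followed by an appeal to Conjecture~\ref{conj:LPS2}. A pigeonhole over $\mathcal D$, amplified by the extremal-graph input (Lemma~\ref{lem:KST}) applied to the bipartite ``rectangle'' structure on $A_0\times B$, should produce six rectangles, involving $24$ distinct $k$-th powers $x_{a,b}$, together with signs $\varepsilon_i\in\{\pm1\}$ such that $\sum_{i=1}^{6}\varepsilon_i(a_i-a_i')(b_i-b_i')=0$. A further bounded thinning lets one also assume the $24$ products $ab$ are pairwise distinct, since a coincidence $ab=a'b'$ would itself propagate rigid structure that can be excluded. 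Telescoping the six rectangle identities then gives
\[
u_1^{k}+\cdots+u_{12}^{k}=v_1^{k}+\cdots+v_{12}^{k}
\]
with $u_1,\dots,u_{12},v_1,\dots,v_{12}$ distinct positive integers; since $k\geq 25$ this contradicts Conjecture~\ref{conj:LPS2}. Hence $|A|\leq 21736$, and by symmetry $\max\{|A|,|B|\}\leq 21736$; the precise constant is what the bookkeeping in the localization and pigeonhole steps yields after optimization. Finally, if $A$ has property $D_k(n)$ with $|A|\geq 4$, picking distinct $a_0,a_0'\in A$ and applying the bipartite bound to $B:=\{a_0,a_0'\}$ and $A':=A\setminus B$ --- which is a bipartite Diophantine tuple with property $BD_k(n)$ --- gives $|A|-2=|A'|\leq 21736$, i.e.\ $M_k(n)\leq 21738$, the cases $|A|\leq 3$ being trivial.

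To restate the difficulty: everything rests on the localization, namely forcing $\mathcal D$ to be bounded independently of $n$ while retaining enough of $A$ to run the pigeonhole; this is where the finite-field refinements, the hypothesis $k\geq 25$, and Conjecture~\ref{conj:LPS2} must be brought to bear in tandem. A secondary but genuine point is to guarantee the distinctness of the $24$ terms in the final identity, so that Conjecture~\ref{conj:LPS2}, which is stated for distinct integers, truly applies.
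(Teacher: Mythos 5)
Your rectangle identity $x_{a,b}^k + x_{a',b'}^k - x_{a,b'}^k - x_{a',b}^k = (a-a')(b-b')$ is correct, but the architecture built on it has a gap that I don't see how to close, and it misses a much cleaner mechanism that the paper actually uses.

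The gap is precisely the ``localization step'' you flag as the hardest part: forcing all discrepancies $(a-a')(b-b')$ arising from a still-large $A_0$ and a bounded piece of $B$ to lie in an absolutely bounded set $\mathcal D$, uniformly in $n$. The sieve and finite-field machinery of Sections~\ref{sec:finitefield}--\ref{sec:bipartite} does not deliver this: those arguments bound $|A|$ or $|B|$ by quantities growing with $|n|$ (powers of $|n|$, or $\log|n|$, or $\log\log|n|$), and nothing in them constrains the \emph{differences} $a-a'$ or $b-b'$ to a bounded set. There is no reason two elements of a large Diophantine tuple should be close together; in fact the large-element analysis (Section~\ref{sec:large}) suggests the opposite. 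So the pigeonhole over $\mathcal D$ has nothing to pigeonhole into, and the six cancelling rectangles never materialize. Even granting bounded discrepancies, producing a \emph{signed} vanishing sum of six products via pigeonhole while simultaneously keeping all $24$ base points distinct is an additional combinatorial demand that your sketch does not address.

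The paper sidesteps the entire issue by using an identity with \emph{automatic} cancellation, requiring no control on discrepancies at all. Fix two elements $u<v$ of $A$. For any $b\in B$, writing $ub+n=x^k$, $vb+n=y^k$ gives the exact linear relation $v x^k - u y^k = (v-u)n$, independent of $b$. Picking eight elements $b_1,\dots,b_4,c_1,\dots,c_4$ of $B$ and forming the $4\times4$ matrix $M$ whose rows are $(x_i^k, y_i^k, z_i^k, w_i^k)$ (with $ub_i+n=x_i^k$, $vb_i+n=y_i^k$, $uc_i+n=z_i^k$, $vc_i+n=w_i^k$), one has $M\,(v,-u,-v,u)^T=\mathbf 0$, so $\det M=0$. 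Expanding the determinant by the Leibniz formula gives a signed sum of $24$ terms, each a $k$-th power, equal to zero --- exactly the shape needed for Conjecture~\ref{conj:LPS2}. The remaining work, and the source of the constant $21736$, is a greedy selection of the $b_i,c_i$ (using monotonicity of $t\mapsto (ut+n)/(vt+n)$ and a quotient-set counting bound) so that the $24$ base numbers $x_{\sigma(1)}y_{\sigma(2)}z_{\sigma(3)}w_{\sigma(4)}$ are pairwise distinct. Your final reduction from the bipartite bound to $M_k(n)\leq 21738$ by splitting off two elements is fine, but the core mechanism producing the $12{+}12$ identity needs to be replaced by the determinant argument; the discrepancy-cancellation route as sketched does not go through.
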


We did not attempt to optimize the constant in the above theorem. In the next two remarks, we explain some motivations behind Theorem~\ref{thm:absolute}.

\begin{rem}
Theorem~\ref{thm:absolute} partially addresses a question asked in \cite[Remark 4.9]{Y24}, where it is asked whether the ABC conjecture could be used to show that if $k\geq 3$ and $n$ is a nonzero integer, then $|A||B|$ is absolutely bounded among all bipartite Diophantine tuples $(A,B)$ with property $BD_{k}(n)$  \footnote{Note that when $k=2$, because of Pell's equations, for example when $A=\{1,2\}$, there exists an infinite set $B$ such that $AB+1$ is contained in the set of perfect squares. In particular, Theorem~\ref{thm:absolute} does not hold for $k=2$.}. Theorem~\ref{thm:absolute} answers this question in the affirmative in a much stronger form, provided that $k \geq 25$, assuming Conjecture~\ref{conj:LPS2}. While Conjecture~\ref{conj:LPS2} does not follow from the ABC conjecture, a slightly weaker version of the conjecture follows immediately from the $n$-conjecture \cite{BB94}, which is a generalization of the ABC conjecture to more variables.
\end{rem}

\begin{rem}\label{rem:uniformity}
The Uniformity Conjecture, due to Caporaso, Harris, and Mazur~\cite{CHM}, states that if $g \geq 2$, then there is a constant $C_g$, such that the number of $\mathbb{Q}$-points of each curve defined over the rationals $\mathbb{Q}$ of genus $g$ is bounded by $C_g$. Note that for $i \in \{1,2\}$, the hyperelliptic curve $$y^2=(x-1)(x-2)\cdots (x-(2g+i))$$ has genus $g$ and has at least $2g+i$ integral points. In particular, $C_g \to \infty$ as $g \to \infty$. 

If $A$ is a Diophantine tuple with property $D_k(n)$ with $k\geq 4$ and $n$ nonzero, we can take $3$ distinct elements $a_1,a_2,a_3$ from $A$ and consider the superelliptic curve
$$
E: y^k=(a_1x+n)(a_2x+n)(a_3x+n).
$$
Since each $a\in A\setminus \{a_1,a_2,a_3\}$ corresponds to an integral point of $E$, we have $|A|\leq |E(\Z)|+3\leq |E(\mathbb{Q})|+3 \leq C_g+3$ assuming the Uniformity Conjecture, where $g$ is the genus of $E$. From the Riemann-Hurwitz formula, $g=k-2$ if $3\mid k$ and $g=k-1$ if $3\nmid k$. Thus, the Uniformity Conjecture only predicts that $M_k(n)\leq \widetilde{C_k}$ for all $n \neq 0$, where $\widetilde{C_k} \to \infty$ as $k \to \infty$. This is much weaker compared to the prediction from Theorem~\ref{thm:absolute}.
\end{rem}

If $n$ is a fixed nonzero integer, then as $k$ increases, it is likely that $M_k(n)$ decreases since being a perfect $k$-th power for a larger $k$ is more restrictive. Indeed, it is proved in \cite[Theorem 2.4 and Corollary 2.6]{Y24} that $M_k(n)\leq 9$ for all $k\geq 2$ provided that $|n|=1$, and $M_k(n)\leq 19$ if $|n|\geq 2$ and $k \geq 2\log |n|+2$. Thus, it is plausible that there is an absolute constant $C$ such that $M_k(n)\leq C$ for all $k\geq 2$ and $n\neq 0$. In view of \cref{rem:uniformity}, by applying the Uniformity Conjecture for $k\leq 24$, and Theorem~\ref{thm:absolute} for $k\geq 25$, this heuristic follows immediately. 

\begin{cor}\label{cor:absolute}
Assume the Uniformity Conjecture and Conjecture~\ref{conj:LPS2}. There is an absolute constant $C$ such that $M_k(n)\leq C$ for all $k\geq 2$ and $n\neq 0$.    
\end{cor}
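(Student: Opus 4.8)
The plan is a dichotomy on the size of $k$, using \cref{conj:LPS2} when $k$ is large and the Uniformity Conjecture when $k$ is small. For $k\geq 25$ there is nothing to do beyond quoting \cref{thm:absolute}, which already gives $M_k(n)\leq 21738$ for every $n\neq 0$. Hence it remains to bound $M_k(n)$ by a constant independent of $n$ for each of the finitely many values $k\in\{2,3,\ldots,24\}$; taking the maximum of these $23$ bounds together with $21738$ will produce the desired absolute constant $C$.

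For a fixed $k$ with $4\leq k\leq 24$ this is precisely the argument recorded in \cref{rem:uniformity}: if $A$ has property $D_k(n)$ and $|A|\geq 4$, pick three distinct $a_1,a_2,a_3\in A$; every remaining $a\in A$ gives an integral point $(a,y)$ on the superelliptic curve $E\colon y^k=(a_1x+n)(a_2x+n)(a_3x+n)$, and distinct $a$'s give points with distinct $x$-coordinates. By Riemann--Hurwitz $E$ has genus $g_k\in\{k-2,k-1\}$, which is $\geq 2$ and depends only on $k$, so the Uniformity Conjecture forces $|A|\leq C_{g_k}+3$, a bound uniform in $n$.

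The cases $k=2$ and $k=3$ need a small modification, since the three-element construction there produces a curve of genus at most $1$. Instead choose $N_k$ elements $a_1,\dots,a_{N_k}\in A$ with $N_2=5$ and $N_3=4$ (assuming $|A|>N_k$, else we are already done); for every other $a\in A$ the product $\prod_{i=1}^{N_k}(aa_i+n)$ is a perfect $k$-th power, so $a$ is the $x$-coordinate of a point on $y^k=\prod_{i=1}^{N_k}(a_ix+n)$. Since the linear forms $a_ix+n$ are pairwise distinct ($n\neq 0$ and the $a_i$ are distinct), a Riemann--Hurwitz computation gives genus $2$ for $(k,N_k)=(2,5)$ and genus $3$ for $(k,N_k)=(3,4)$; both are $\geq 2$ and depend only on $k$, so again the Uniformity Conjecture yields $|A|\leq C_{g_k}+N_k$ uniformly in $n$.

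Setting $N_k=3$ for $4\leq k\leq 24$ and $C:=\max\!\bigl(21738,\ \max_{2\leq k\leq 24}\,(C_{g_k}+N_k)\bigr)$ then proves the claim. The only nonroutine point is the treatment of $k\in\{2,3\}$, where one must enlarge the number of chosen elements so that the associated curve has genus at least $2$; everything else is an immediate appeal to \cref{thm:absolute} and to the construction in \cref{rem:uniformity}.
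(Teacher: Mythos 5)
Your proposal is correct and takes essentially the same route as the paper: split at $k=25$, quote \cref{thm:absolute} above the threshold, and invoke the Uniformity Conjecture below it. The paper itself handles $2\leq k\leq 24$ simply by pointing to \cref{rem:uniformity} (which treats $k\geq 4$) together with the earlier citation of \cite{D02,KYY} for the general statement that the Uniformity Conjecture gives a constant $C_k$ for each fixed $k\geq 2$. You instead supply the $k=2,3$ cases explicitly by raising the degree of the defining polynomial to $5$ and $4$ respectively so the resulting superelliptic curve has genus $\geq 2$; your genus computations ($g=2$ for $k=2,\ d=5$ and $g=3$ for $k=3,\ d=4$, via $g=\tfrac{(k-1)(d-1)-(\gcd(k,d)-1)}{2}$) are correct, the curves are irreducible since the squarefree right-hand side is not a perfect power in $\mathbb{Q}[x]$, and the points you produce are smooth affine points with distinct $x$-coordinates, so the Uniformity Conjecture applies as you say. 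This is a legitimate and self-contained filling-in of a detail the paper delegates to references; the overall structure of the argument is unchanged.
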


Next, we consider conditional upper bounds on $M_{\leq d}(n)$ and $\widetilde{f}(x)$.

\begin{thm}\label{thm:absolutedf}
Assume the Uniformity Conjecture and Conjecture~\ref{conj:LPS2}. There is an absolute constant $C'$ such that $M_{\leq d}(n)\leq C' (d/\log d)^2$ for all $2\leq d<\infty$ and $n\neq 0$. In particular, $\widetilde{f}(x)\ll (\log x/\log \log x)^2$.      
\end{thm}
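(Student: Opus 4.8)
The plan is to combine Theorem~\ref{thm:absolute} (which controls primes $p\geq 25$) with the Uniformity Conjecture (which controls the finitely many primes $p<25$) through a weighted double count on an edge-colored complete graph, in the spirit of inequality~\eqref{eq:Ramsey} but using Tur\'an's theorem and the K\H{o}v\'ari--S\'os--Tur\'an theorem in place of a naive Ramsey bound (a naive Ramsey bound would only give a bound exponential in $\pi(d)$). Concretely, let $A$ be a Diophantine tuple with property $D_{\leq d}(n)$, put $N=|A|$, form the complete graph on vertex set $A$, and color each edge $\{a,b\}$ by the smallest prime $p\leq d$ for which $ab+n$ is a perfect $p$-th power. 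Let $S=\{2,3,5,7,11,13,17,19,23\}$, let $G_S$ be the subgraph of edges colored by a prime in $S$, and let $G_L$ be the subgraph of edges colored by a prime $p$ with $25\leq p\leq d$; then $E(G_S)\cup E(G_L)$ is all of $\binom{A}{2}$.

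To bound $G_S$: for each $p\in S$, a clique in the color-$p$ subgraph is a Diophantine tuple with property $D_p(n)$, so has size at most $M_p(n)$; assuming the Uniformity Conjecture, $M_p(n)\leq \widehat{C}_p$ for an absolute constant $\widehat C_p$ and all $n\neq 0$ (for $p\geq 5$ this is the argument of Remark~\ref{rem:uniformity} applied to $3$ elements, and for $p\in\{2,3\}$ one instead uses $5$, respectively $4$, elements so that the relevant superelliptic curve has genus $\geq 2$). Hence $G_S$, a union of $9$ such subgraphs, contains no $K_R$ with $R:=R(\widehat C_2+1,\ldots,\widehat C_{23}+1)$ an absolute constant, and Tur\'an's theorem gives $|E(G_S)|\leq \big(1-\frac{1}{R-1}\big)\frac{N^2}{2}$, so $|E(G_L)|=\binom{N}{2}-|E(G_S)|\geq \frac{N^2}{4(R-1)}$ provided $N\geq 2(R-1)$ (if $N<2(R-1)$ we are already done since $R$ is absolute). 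To bound each color class of $G_L$: fix a prime $p$ with $25\leq p\leq d$; if the color-$p$ subgraph contained a copy of $K_{2,21737}$ with parts $\{x,y\}$ and $\{c_1,\ldots,c_{21737}\}$, then $(\{x,y\},\{c_1,\ldots,c_{21737}\})$ would be a bipartite Diophantine tuple with property $BD_p(n)$, contradicting Theorem~\ref{thm:absolute} (available since we assume Conjecture~\ref{conj:LPS2}). So each color-$p$ subgraph is $K_{2,21737}$-free, and the K\H{o}v\'ari--S\'os--Tur\'an theorem (Lemma~\ref{lem:KST}) gives $|E(G_p)|\leq C_0 N^{3/2}$ for an absolute constant $C_0$; summing over the at most $\pi(d)$ colors of $G_L$ yields $|E(G_L)|\leq \pi(d)\,C_0 N^{3/2}$.

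Comparing the two estimates for $|E(G_L)|$ gives $\sqrt{N}\leq 4(R-1)C_0\,\pi(d)$, hence $N\ll \pi(d)^2\ll (d/\log d)^2$ with an absolute implied constant, proving $M_{\leq d}(n)\ll (d/\log d)^2$. For the final assertion, if $A\subseteq[1,x]$ has property $D_{\leq\infty}(n)$ with $1\leq|n|\leq x$, then for distinct $a,b\in A$ the integer $ab+n$ lies in $[1,2x^2]$ and is a perfect power, hence a perfect $p$-th power for some prime $p\leq \log_2(2x^2)\ll\log x$; thus $A$ has property $D_{\leq d}(n)$ with $d\ll\log x$, and since $\log d\asymp\log\log x$ the bound $\widetilde f(x)\ll(\log x/\log\log x)^2$ follows.

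The main obstacle is obtaining constants that are genuinely absolute and uniform over $p$, $n$, and $d$ at once: the argument is only non-elementary because it must invoke two deep inputs for exactly this uniformity. The Uniformity Conjecture provides a single absolute bound for each of the $O(1)$ small primes, but for the potentially many large primes one cannot afford a bound growing with $p$ — the genus of the associated curves grows with $p$, so the Uniformity Conjecture alone is useless there — which is precisely why Theorem~\ref{thm:absolute}, and hence Conjecture~\ref{conj:LPS2}, is indispensable, as it delivers one absolute $K_{2,t}$-freeness bound valid for all primes $p\geq 25$ simultaneously. A secondary point to get right is that $K_{s,t}$-freeness with $s\geq 3$ would only yield $N\ll\pi(d)^s$, so it is essential that Theorem~\ref{thm:absolute} bounds $|B|$ already when $|A|=2$.
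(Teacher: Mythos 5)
Your proposal is correct and follows essentially the same route as the paper: color the edges of the complete graph on $A$ by the smallest prime $p\leq d$ with $ab+n$ a $p$-th power, handle the nine primes $p<25$ by Tur\'an's theorem via a uniform clique bound coming from the Uniformity Conjecture, handle each prime $25\leq p\leq d$ by $K_{2,21737}$-freeness from Theorem~\ref{thm:absolute} together with K\H{o}v\'ari--S\'os--Tur\'an, and compare the two edge counts to get $|A|\ll\pi(d)^2$. The only cosmetic difference is that the paper packages the small-prime clique bound through Corollary~\ref{cor:absolute} and then takes $R=R(C+1,\dots,C+1)$, whereas you invoke the Uniformity Conjecture for each $p<25$ directly (and correctly note that for $p\in\{2,3\}$ one needs $5$, respectively $4$, elements in the auxiliary curve to push the genus to $\geq 2$); the deduction of the bound on $\widetilde f(x)$ is likewise the same.
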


We also prove a weaker bound only assuming Conjecture~\ref{conj:LPS2}.

\begin{thm}\label{thm:LPS}
Assume Conjecture~\ref{conj:LPS2}. Then $\widetilde{f}(x)\ll (\log x)^{4}$.
\end{thm}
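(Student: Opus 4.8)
The plan is to show that, assuming Conjecture~\ref{conj:LPS2}, any set $A \subseteq [1,x]\cap\N$ with property $D_{\leq\infty}(n)$ for some $1\leq |n|\leq x$ has $|A|\ll(\log x)^4$. First I would reduce to a single perfect-power exponent by the standard graph-coloring argument: build the complete graph on $A$, color each edge $ab$ by the smallest prime $k$ with $ab+n\in V_k$ (such a prime exists since $ab+n$ is a perfect power, and $ab+n\leq x^2+x$ forces $k\ll\log x$). Since there are $\ll\log x/\log\log x$ primes below $\log x$, by pigeonhole there is a single prime $k\leq 2\log x$ and a subset $A'\subseteq A$ with $|A'|\gg|A|\cdot\frac{\log\log x}{\log x}$ such that $ab+n$ is a perfect $k$-th power for all distinct $a,b\in A'$; i.e.\ $A'$ has property $D_k(n)$. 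It then suffices to prove $|A'|\ll(\log x)^3$, so that $|A|\ll(\log x)^4$ after undoing the pigeonhole loss.

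Next I would split into two regimes according to the size of $k$. For small $k$ (say $k\leq 24$), the known unconditional bounds give $M_k(n)\ll_k\log|n|\ll\log x$, which is far stronger than needed; one only needs the implied constant to be uniform over $k\leq 24$, which follows from the explicit constants in \cite{KYY,Y24,Y24+}. For large $k$ ($25\leq k\leq 2\log x$), I would invoke the conditional bipartite bound: by the K\"ovari--S\'os--Tur\'an theorem applied to the graph on $A'$ (which is complete, hence trivially contains large bipartite pieces), or more directly, since $D_k(n)$ implies $(A',A'\setminus\{a\})$ is a bipartite Diophantine tuple with property $BD_k(n)$ for any $a\in A'$, Theorem~\ref{thm:absolute} (which assumes exactly Conjecture~\ref{conj:LPS2}) gives $|A'|\leq 21738$ for each such $k$ — in fact absolutely bounded, which is more than enough. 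Combining the two regimes yields $|A'|\ll\log x$ uniformly, hence $|A|\ll(\log x)^2/\log\log x$ — actually \emph{better} than the claimed $(\log x)^4$.

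Given that the argument above seems to overshoot, I suspect the intended proof is more conservative: it likely uses Theorem~\ref{thm:absolute} only for $k\geq 25$ together with the \emph{conditional-free} machinery behind Theorem~\ref{thm:Vd} rather than the sharpest $M_k(n)$ bounds, and the weaker exponent $4$ absorbs the losses from (i) the pigeonhole over $\ll\log x/\log\log x$ primes, (ii) a possibly cruder uniform handling of the ranges of $k$, and (iii) not optimizing which of Theorems~\ref{thm:Vd}, \ref{thm:infty} one feeds in. So the concrete plan I would actually write: apply Theorem~\ref{thm:infty} directly to $A$ with $N=x$, giving $|A|\ll\exp(L(\log\log x)^2)+\log x$; this is already $x^{o(1)}$ but not yet $(\log x)^4$. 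To remove the $\exp(L(\log\log x)^2)$ term, re-run the proof of Theorem~\ref{thm:infty} but at the step where one reduces to property $D_k(n)$ for a single prime $k$, branch on $k\geq 25$ versus $k<25$: for $k\geq 25$ use Theorem~\ref{thm:absolute} (absolute bound, conditional on Conjecture~\ref{conj:LPS2}), and for $k<25$ use $M_k(n)\ll\log|n|\ll\log x$. Then the dominant contribution is $\ll\log x$ per prime, times $\ll\log x/\log\log x$ primes from pigeonhole, giving $|A|\ll(\log x)^2/\log\log x\ll(\log x)^4$.

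The main obstacle is making the reduction-to-a-single-prime step actually lose only a polynomial-in-$\log x$ factor while keeping the dependence on $n$ under control: in the $D_{\leq\infty}$ setting the shift $n$ is common to all edges, but after passing to a large monochromatic clique one must still track that $|n|\leq x$ feeds into the $\log|n|\leq\log x$ bound for the small-$k$ colors, and that for large $k$ one genuinely has a bipartite $BD_k(n)$ configuration of the right shape to apply Theorem~\ref{thm:absolute}. Verifying uniformity of the implied constants in the unconditional $M_k(n)\ll_k\log|n|$ bounds over the finite range $2\leq k\leq 24$ is routine but must be cited carefully. No Diophantine-approximation input beyond what is already packaged in the earlier theorems is needed.
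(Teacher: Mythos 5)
Your choice of ingredients is correct (Theorem~\ref{thm:absolute} for primes $k \ge 25$ under Conjecture~\ref{conj:LPS2}, and the unconditional $\ll \log x$ bounds for small $k$), but the way you combine them is broken. The step ``by pigeonhole there is a single prime $k$ and a subset $A' \subseteq A$ with $|A'| \gg |A| \cdot \frac{\log\log x}{\log x}$ such that $A'$ has property $D_k(n)$'' is not valid: pigeonhole over $\ll \log x / \log\log x$ edge-colors only gives a color class with $\gg \binom{|A|}{2} \cdot \frac{\log\log x}{\log x}$ \emph{edges}, not a monochromatic \emph{clique} of that size. Extracting a large monochromatic clique from a coloring with that many colors is a Ramsey-type problem and loses exponentially. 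Your ``more conservative'' plan repeats the same error: ``$\ll \log x$ per prime, times $\ll \log x/\log\log x$ primes'' is not a valid way to bound $|A|$ from per-color clique bounds --- the correct combinatorial cost of bounded monochromatic clique number in every color is a Ramsey number, not a product. This is why your claimed improvement $|A| \ll (\log x)^2/\log\log x$ is illusory.

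The paper never extracts a monochromatic clique; it bounds \emph{edge counts} per color and compares. It colors edge $ab$ with color $1$ if $ab+n \in V_{25}$ (bundling all small exponents) and otherwise with the smallest prime $p > 25$ making $ab+n$ a $p$-th power. Theorem~\ref{thm:Vd} gives $M_{\le 25}(n) \ll \log x$, so there is no monochromatic $K_{\lfloor C\log x\rfloor + 1}$ in color $1$; Tur\'an's theorem (Lemma~\ref{lem:Turan}) then bounds the number of color-$1$ edges by $\frac12\bigl(1 - \frac{1}{\lfloor C\log x\rfloor}\bigr)|A|^2$, leaving $\gg |A|^2/\log x$ edges with prime color. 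For each prime color $p$, Theorem~\ref{thm:absolute} forbids a monochromatic $K_{2,21737}$, so K\"ovari--S\'os--Tur\'an (Lemma~\ref{lem:KST}) gives $\ll |A|^{3/2}$ edges per prime, hence $\ll (\log x)\,|A|^{3/2}$ in total. Comparing the two counts yields $|A| \ll (\log x)^4$. This Tur\'an/KST double edge-counting is the missing idea; with it your ingredients do assemble correctly.
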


Finally, we turn our attention to conditional upper bounds on $M_{\leq \infty}(n)$. So far there is no unconditional proof that $M_{\leq \infty}(1)<\infty$, let alone $M_{\leq \infty}(n)$ for a general nonzero integer $n$. We first recall the ABC conjecture.

\begin{conj}[ABC conjecture]
For each $\epsilon>0$, there is a constant $C_\epsilon$, such that whenever $a,b,c$ are nonzero integers with $\gcd(a,b,c)=1$ and $a+b=c$, we have 
$$
\max \{|a|,|b|,|c|\} \leq C_{\epsilon} \operatorname{rad} (abc)^{1+\epsilon}.
$$    
\end{conj}
Under the ABC conjecture, Luca \cite{L05} showed that $M_{\leq \infty}(1)$ is finite. More generally, B\'{e}rczes, Dujella, Hajdu, and Luca \cite[Theorem 4]{BDHL11} showed that under the ABC conjecture, $M_{\leq \infty}(n)$ is finite whenever $n$ is a nonzero integer. More precisely, their proof shows that $M_{\leq \infty}(n)\leq c_0|n|^3+ R(n)$, where $c_0$ is an absolute constant and
$$
R(n)=R(C_1(2,n), C_1(3,n), C_1(5,n), \ldots, C_1(3203,n),5),
$$
is given by the Ramsey number, where 
\[C_1(2,n)=31+15.476\log |n|, \quad C_1(3,n)=2|n|^{17}+6, \quad C_1(k,n)=2|n|^5+3 \quad \text{for  } k \geq 5.\] Note $R(n)\gg (\sqrt{2})^{|n|^5}$ by a classical result of Erd\"os~\cite{E47} on lower bounds of Ramsey numbers, thus their upper bound on $M_{\leq \infty}(n)$ is at least exponential in $|n|$. Our next theorem provides a dramatic improvement on their bound.

\begin{thm}\label{thm:ABC}
Assume the ABC conjecture. If $n$ is a nonzero integer, then $M_{\leq \infty}(n)\ll \widetilde{f}(2|n|^{17})$. In particular, there is an absolute constant $L$, such that $M_{\leq \infty}(n)\ll \exp\big(L(\log \log |n|)^2\big)$.
\end{thm}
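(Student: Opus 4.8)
\emph{Reduction.} It suffices to prove, under the ABC conjecture, that every Diophantine tuple $A$ with property $D_{\leq\infty}(n)$ and $|A|\ge 4$ satisfies $\max A\le 2|n|^{17}$. Indeed, once $A\subseteq[1,2|n|^{17}]$, the definition of $\widetilde f$ applies verbatim — legitimately, since $1\le |n|\le 2|n|^{17}$ — and gives $M_{\leq\infty}(n)\le\widetilde f(2|n|^{17})$; feeding in Corollary~\ref{cor:f} and using $\log\log(2|n|^{17})=\log\log|n|+O(1)$ then yields the ``in particular'' claim. (For that last claim one could even get away with any polynomial bound $\max A\ll |n|^{O(1)}$: combined with Theorem~\ref{thm:infty} applied with $N=\max A$, the resulting $O(\log|n|)$ term is absorbed into $\exp(L(\log\log|n|)^2)$. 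The sharp exponent $17$ is only what makes the cleaner estimate $\widetilde f(2|n|^{17})$ come out.)

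\emph{The ABC estimate.} For distinct $a,b\in A$ write $ab+n=y_{a,b}^{\,k_{a,b}}$, let $a_1<a_2<a_3$ be the three smallest elements of $A$, and set $d=\max A$ and $u_\ell=a_\ell d+n=y_{a_\ell,d}^{\,k_{a_\ell,d}}$. The identity $a_iu_j-a_ju_i=(a_i-a_j)n$ holds for all $i\ne j$, and $\operatorname{rad}(u_\ell)\le y_{a_\ell,d}\le (a_\ell d+n)^{1/k_{a_\ell,d}}$. Applying ABC to $a_iu_j-a_ju_i=(a_i-a_j)n$ for a pair $\{i,j\}$ with $1/k_{a_i,d}+1/k_{a_j,d}<1$ — among the three smallest elements such a pair exists unless $k_{a_1,d}=k_{a_2,d}=k_{a_3,d}=2$ — bounds $d$ polynomially in $a_1,a_2,a_3$ and $|n|$, the extremal admissible exponent pair being $\{2,3\}$. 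When the pivots have size $O(|n|)$ this reproduces the cube computation of B\'erczes--Dujella--Hajdu--Luca \cite{BDHL11} and gives exactly $d\le 2|n|^{17}$; larger exponents give smaller bounds. In the leftover case $k_{a_1,d}=k_{a_2,d}=k_{a_3,d}=2$, where ABC is vacuous, $d$ is a value for which $a_1d+n$, $a_2d+n$, $a_3d+n$ are simultaneously squares, hence an integral point on the curve cut out by $a_jY_1^2-a_1Y_j^2=(a_j-a_1)n$ for $j=2,3$; here one invokes the unconditional theory of $D_2(n)$-tuples of Dujella, which confines such $d$ to a fixed power of $|n|$ that is below $2|n|^{17}$.

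\emph{Main obstacle.} The heart of the matter is twofold. First, the ABC estimate only controls $d$ once the pivots $a_1,a_2,a_3$ are themselves bounded by a fixed power of $|n|$, ideally $O(|n|)$; establishing this, or otherwise handling the case in which the smallest elements of $A$ are spread out (by a bootstrap, or a dedicated argument), is the bookkeeping-heavy step, and is exactly where the exponent $17$ gets pinned down. Second, the all-squares configuration genuinely lies outside the reach of ABC — squares being far too plentiful — so an unconditional, Pell-equation-flavoured input on $D_2(n)$-tuples is unavoidable there. The rest is routine: the passage from $\max A\le 2|n|^{17}$ to the stated bounds is immediate from the definition of $\widetilde f$ and from Corollary~\ref{cor:f}.
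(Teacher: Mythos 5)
Your reduction — that under ABC every $D_{\leq\infty}(n)$-tuple of size $\ge 4$ must satisfy $\max A\le 2|n|^{17}$ — is false, and this is not a repairable bookkeeping issue but a structural one. The obstruction is exactly the $k=2$ case you flag at the end: Pell equations produce $D_2(n)$-tuples (and hence $D_{\leq\infty}(n)$-tuples, since squares are perfect powers) whose largest element is unbounded in terms of $|n|$. You propose to handle this by invoking ``the unconditional theory of $D_2(n)$-tuples of Dujella,'' but that theory (Lemma~\ref{lem:D_2} in the paper) bounds only the \emph{number} of elements of such a tuple lying in $[|n|^3,\infty)$ — it says there are at most $21$ of them — and says nothing about their size. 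There is no absolute bound of the form $d\le 2|n|^{17}$ on the largest element of a $D_2(n)$-tuple, so the ``leftover case'' in your argument cannot be closed, and with it the whole reduction collapses. A secondary gap is that your ABC application to $a_iu_j-a_ju_i=(a_i-a_j)n$ only yields a polynomial bound on $d$ when the pivots $a_1,a_2,a_3$ are themselves already polynomially bounded in $|n|$; you acknowledge this as ``the bookkeeping-heavy step'' but do not supply an argument, and it is not clear one exists in this form since the three smallest elements of $A$ need not be small.

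The paper's proof avoids both problems by never trying to cap $\max A$. It sets $N=\max(c_0|n|^3,2|n|^{17})$ and splits $A=A_1\cup A_2$ with $A_1=A\cap[1,N]$, $A_2=A\cap(N,\infty)$. The bound $|A_1|\le\widetilde f(2|n|^{17})$ is then tautological. The real work is showing $|A_2|=O(1)$, and this is a \emph{cardinality} bound, not a size bound: one colors the complete graph on $A_2$ by the exponent class of $a a'+n$, rules out a monochromatic $K_5$ in the ``large exponent ($\ge 3205$)'' color via an ABC-derived lemma of B\'erczes--Dujella--Hajdu--Luca, rules out a monochromatic $K_{22}$ in the ``square'' color via Lemma~\ref{lem:D_2}, absorbs those two colors together via the Ramsey number $R(5,22)$ and Tur\'an's theorem, and handles each remaining prime exponent $3\le p\le 3203$ via Corollary~\ref{cor:contribution}. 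Your instinct that ABC plus a square-specific input is needed is right, but both must be used to bound a count, not a magnitude; replacing ``$\max A$ is small'' by ``at most $O(1)$ elements of $A$ exceed $2|n|^{17}$'' is the change that makes the argument go through.
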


The following corollary follows immediately by combining Theorem~\ref{thm:ABC} with Theorem~\ref{thm:LPS} or Theorem~\ref{thm:absolutedf}.

\begin{cor}\label{cor:ABCLPS}
Assume the ABC conjecture and Conjecture~\ref{conj:LPS2}. Then $M_{\leq \infty}(n)\ll (\log |n|)^{4}$; and $M_{\leq \infty}(n)\ll (\log |n|/\log \log |n|)^{2}$ if we further assume the Uniformity Conjecture.
\end{cor}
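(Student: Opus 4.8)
The plan is to derive Corollary~\ref{cor:ABCLPS} by directly composing the three results it cites, so the argument is essentially a short chain of inequalities together with an elementary simplification of the resulting logarithmic expressions. Fix a nonzero integer $n$; we may assume $|n|$ is large, since for bounded $|n|$ the quantity $M_{\leq \infty}(n)$ is trivially absorbed into an absolute implied constant (this already follows from Theorem~\ref{thm:ABC} applied with a fixed $n$ under the ABC conjecture). Thus it suffices to establish the two asymptotic bounds.

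First I would invoke Theorem~\ref{thm:ABC}: under the ABC conjecture, $M_{\leq \infty}(n)\ll \widetilde{f}(2|n|^{17})$ with an absolute implied constant. This reduces the problem to upper bounds on $\widetilde{f}$ evaluated at $x=2|n|^{17}$. For the first assertion, I would apply Theorem~\ref{thm:LPS}, which gives $\widetilde{f}(x)\ll (\log x)^4$ under Conjecture~\ref{conj:LPS2}; hence
$$
M_{\leq \infty}(n)\ll \big(\log(2|n|^{17})\big)^4 = \big(17\log|n|+\log 2\big)^4 \ll (\log|n|)^4,
$$
the last step using $17\log|n|+\log 2 \asymp \log|n|$ as $|n|\to\infty$ with an absolute constant (for instance, for $|n|\geq 2$ one has $\log(2|n|^{17})\leq 18\log|n|$, making the loss explicit). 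For the second assertion, I would instead apply Theorem~\ref{thm:absolutedf}, which under the Uniformity Conjecture together with Conjecture~\ref{conj:LPS2} gives $\widetilde{f}(x)\ll (\log x/\log\log x)^2$; the same substitution $x=2|n|^{17}$ and the asymptotics $\log(2|n|^{17})\asymp \log|n|$ and $\log\log(2|n|^{17})\asymp \log\log|n|$ then yield $M_{\leq \infty}(n)\ll (\log|n|/\log\log|n|)^2$.

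I do not expect any genuine obstacle here: all of the substantive work lies in the prerequisite theorems (the ABC-conditional bound of Theorem~\ref{thm:ABC}, which replaces the Ramsey-number argument of B\'erczes--Dujella--Hajdu--Luca, and the conditional bounds on $\widetilde{f}$ in Theorems~\ref{thm:LPS} and \ref{thm:absolutedf}). The only minor points to verify are that the implied constants in those three theorems are absolute, so that the composed bound is absolute, and that the reduction to large $|n|$ is legitimate; both are immediate. If one wishes, the entire corollary can be stated with fully explicit constants by tracking the factor $18$ (or any convenient constant) incurred when passing from $\log(2|n|^{17})$ to $\log|n|$ in each chained inequality.
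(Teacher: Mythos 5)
Your proposal is correct and is exactly the paper's argument: the paper states that Corollary~\ref{cor:ABCLPS} follows immediately by combining Theorem~\ref{thm:ABC} with Theorem~\ref{thm:LPS} (respectively Theorem~\ref{thm:absolutedf}), which is precisely the chain of substitutions $M_{\leq\infty}(n)\ll\widetilde{f}(2|n|^{17})$ followed by $\log(2|n|^{17})\asymp\log|n|$ that you carry out. Your added verification that the implied constants are absolute and the reduction to large $|n|$ is legitimate is fine but not a point of divergence.
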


\section{Preliminaries}\label{sec:prelim}
\subsection{Tools from extremal graph theory}\label{subsec:exgraph}
We first introduce a few basic terminologies from graph theory. A 
bipartite graph $G$ with bipartition $(A,B)$ (where $A \cap B=\emptyset$) is a graph with vertex set $A\cup B$ such that no two vertices in $A$ (resp. $B$) are adjacent. $K_r$ denotes the complete graph on $r$ vertices, namely, there is an edge between any pair of distinct vertices. $K_{s,t}$ denotes a complete bipartite graph with bipartition $(A,B)$, where $|A|=s$ and $|B|=t$, that is, there is an edge between $a$ and $b$ for all $a\in A$ and $b \in B$.

Below, we state two fundamental results in extremal graph theory. The first one is Tur\'an's theorem~\cite{T41} regarding forbidden complete subgraphs, and the second one is the K\"ov\'ari--S\'os--Tur\'an theorem \cite{KST54} concerning forbidden complete bipartite subgraphs.

\begin{lem}[Tur\'an's theorem]\label{lem:Turan}
Let $G$ be a graph with $n$ vertices such that it does not contain $K_r$ as a subgraph. Then the number of edges of $G$ is at most $\frac{1}{2} (1-\frac{1}{r-1}) n^2$.    
\end{lem}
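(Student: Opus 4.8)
The plan is to prove this by Zykov's symmetrization: reduce an extremal graph to a complete multipartite graph and then finish with a convexity estimate. The case $r=2$ is trivial, since then the asserted bound is $0$ and a $K_2$-free graph has no edges, so I assume $r\geq 3$.

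First I would fix $n$ and $r$ and let $G$ be a $K_r$-free graph on $n$ vertices with the maximum possible number of edges $e(G)$; it suffices to bound $e(G)$ for such an extremal $G$. The key claim is that the relation ``$u,v$ are non-adjacent or equal'' is an equivalence relation on $V(G)$; only transitivity requires an argument. Suppose for contradiction that there are vertices $x,y,z$ with $xy,yz\notin E(G)$ but $xz\in E(G)$. If $\deg(y)<\deg(x)$, or symmetrically $\deg(y)<\deg(z)$, I replace $y$ by a \emph{clone} of $x$: delete $y$ and add a vertex $y'$ with $N(y')=N(x)$. Since $y'\not\sim x$ (as $x\notin N(x)$), no clique of the new graph contains both $x$ and $y'$, and any clique through $y'$ corresponds to one of equal size through $x$; hence the new graph is still $K_r$-free. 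Its number of edges is $e(G)-\deg(y)+\deg(x)>e(G)$, contradicting maximality. In the remaining case $\deg(y)\geq\deg(x)$ and $\deg(y)\geq\deg(z)$, I replace \emph{both} $x$ and $z$ by clones of $y$; now $x,y,z$ are pairwise non-adjacent, so the new graph remains $K_r$-free, while its edge count is $e(G)-(\deg(x)+\deg(z)-1)+2\deg(y)\geq e(G)+1$, again a contradiction. This establishes the claim.

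With the claim in hand, $G$ is complete multipartite, its parts $V_1,\dots,V_t$ being the equivalence classes. Since choosing one vertex from each class would otherwise produce a $K_r$, we must have $t\leq r-1$. Writing $n_i=|V_i|$, we get
$$
e(G)=\binom{n}{2}-\sum_{i=1}^{t}\binom{n_i}{2}=\frac12\Big(n^2-\sum_{i=1}^{t}n_i^2\Big),
$$
and by Cauchy--Schwarz $\sum_{i=1}^{t}n_i^2\geq \tfrac1t\big(\sum_{i=1}^t n_i\big)^2=\tfrac{n^2}{t}\geq\tfrac{n^2}{r-1}$, whence $e(G)\leq\frac12\big(1-\frac1{r-1}\big)n^2$, as required. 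The main obstacle is the bookkeeping in the double-cloning case: one must check both that cloning preserves $K_r$-freeness and that the exchange $-(\deg(x)+\deg(z)-1)+2\deg(y)$ correctly counts the change in edges incident to $\{x,z\}$, using that $x,z\notin N(y)$ so that $x$ and $z$ genuinely become twins of $y$ with no edge between them.
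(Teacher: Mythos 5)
The paper states Tur\'an's theorem as a cited lemma (reference to Tur\'an's 1941 paper) and gives no proof, so there is nothing internal to compare against. Your Zykov symmetrization argument is correct: the case analysis on degrees is exhaustive, the edge-count bookkeeping (including the $-1$ from the edge $xz$) is right, the $K_r$-freeness of the modified graphs is verified properly, and the reduction to a complete multipartite graph with at most $r-1$ parts followed by Cauchy--Schwarz gives exactly the claimed bound $\frac12\bigl(1-\frac{1}{r-1}\bigr)n^2$. This is a standard proof and in fact proves somewhat more (the structure of the extremal graph) than the stated edge bound requires.
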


\begin{lem}[K\"ov\'ari--S\'os--Tur\'an theorem]\label{lem:KST}
Let $G$ be a graph with $n$ vertices. If $G$ does not contain $K_{s,t}$ as a subgraph, where $s\leq t$, then the number of edges of $G$ is at most $(t-1)^{1/s} n^{2-1/s}+(s-1)n$.   
\end{lem}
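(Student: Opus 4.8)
\textbf{Proof plan for Lemma~\ref{lem:KST} (K\"ovari--S\'os--Tur\'an theorem).}

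The plan is to run the standard double-counting argument on ``cherries'' — configurations consisting of a set of $s$ vertices on one side all joined to a common vertex on the other side — and then force a $K_{s,t}$ by pigeonhole. Concretely, let $G$ have vertex set $V$ with $|V|=n$ and $e$ edges, and write $d(v)$ for the degree of $v$. First I would count pairs $(v, S)$ where $S$ is an $s$-subset of the neighborhood $N(v)$; this count equals $\sum_{v\in V}\binom{d(v)}{s}$ on the one hand. On the other hand, if $G$ contains no $K_{s,t}$ with the part of size $s$ on the ``$S$-side'', then every fixed $s$-subset $S\subseteq V$ has at most $t-1$ common neighbors (otherwise those $\ge t$ common neighbors together with $S$ form a $K_{s,t}$), so the same count is at most $(t-1)\binom{n}{s}$. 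This gives the master inequality
\begin{equation}\label{eq:KST-master}
\sum_{v\in V}\binom{d(v)}{s}\ \le\ (t-1)\binom{n}{s}.
\end{equation}
One subtlety: a $K_{s,t}$ as a subgraph could have either part playing the role of the $s$-set, but since $s\le t$ it suffices to forbid the configuration with an $s$-set having $t$ common neighbors, which is exactly what \eqref{eq:KST-master} uses, so this causes no loss.

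Next I would convert \eqref{eq:KST-master} into a bound on $e=\tfrac12\sum_v d(v)$ via convexity. The function $x\mapsto \binom{x}{s}$ (extended to real $x\ge s-1$ by $\binom{x}{s}=x(x-1)\cdots(x-s+1)/s!$, and set to $0$ for $s-1\le x< s$, and extended convexly below) is convex on $[0,\infty)$ up to the usual care near small arguments; the clean way is to use the inequality $\binom{d(v)}{s}\ge \binom{\bar d}{s}$ summed appropriately, i.e. Jensen's inequality applied to the convex function $\psi(x)=\binom{x}{s}$, yielding $\sum_v\binom{d(v)}{s}\ge n\binom{2e/n}{s}$. Combining with \eqref{eq:KST-master},
\begin{equation}\label{eq:KST-jensen}
n\binom{2e/n}{s}\ \le\ (t-1)\binom{n}{s}.
\end{equation}
The last step is purely a matter of simplifying \eqref{eq:KST-jensen}: using $\binom{2e/n}{s}\ge \bigl(\tfrac{2e/n-(s-1)}{s}\bigr)^s$ (valid when $2e/n\ge s-1$) and $\binom{n}{s}\le n^s/s!$ — or, even more directly, the crude bounds $\binom{2e/n}{s}\ge (2e/n - s + 1)^s/s! $ and $\binom{n}{s}\le n^s/s!$ so that the $s!$'s cancel — I get $(2e/n-s+1)^s\le (t-1)n^{s-1}\cdot n = (t-1)n^{s}$... let me instead cancel correctly: \eqref{eq:KST-jensen} gives $(2e/n-s+1)^s \le (t-1)\,n^{s-1}\cdot n / n$; being careful, $n\binom{2e/n}{s}\le (t-1)\binom ns$ with both binomials divided by $s!$ yields $n(2e/n-s+1)^s\le (t-1)n^s$, hence $2e/n - s + 1\le (t-1)^{1/s} n^{(s-1)/s}$, i.e. $2e \le (t-1)^{1/s} n^{2-1/s} + (s-1)n$, and dividing by $2$ gives $e\le \tfrac12\bigl[(t-1)^{1/s}n^{2-1/s}+(s-1)n\bigr]$, which is at most $(t-1)^{1/s}n^{2-1/s}+(s-1)n$ as claimed (the stated bound is in fact weaker by a factor of roughly $2$, so there is slack). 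One should also dispatch the degenerate case $2e/n< s-1$ separately, where $e< (s-1)n/2$ and the claimed bound holds trivially.

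The main obstacle — really the only place requiring attention — is making the convexity step \eqref{eq:KST-jensen} fully rigorous near the boundary, since $\binom{x}{s}$ as a polynomial is not monotone or convex on all of $[0,\infty)$ (it dips negative on $(s-1,s)$ shifted intervals). The standard fix, which I would adopt, is to note that vertices of degree $<s$ contribute $0$ to the left side of \eqref{eq:KST-master} and can be handled by restricting attention to the subgraph on vertices of degree $\ge s$, or simply to invoke the convex extension $\psi(x)=\max\{\binom{x}{s},0\}$ combined with the fact that $\psi$ is convex and increasing on $[s-1,\infty)$ and that we may assume $2e/n\ge s$ (else done trivially). Everything else is routine algebra, and the slack in the stated constant means no optimization is needed.
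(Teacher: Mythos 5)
Your proof is correct and is the standard double-counting argument for the K\"ovari--S\'os--Tur\'an theorem; the paper itself does not reproduce a proof but only cites \cite{KST54}, so there is nothing to compare against. The one place requiring care, the Jensen step, you correctly flag and correctly resolve: the real-variable extension $\binom{x}{s}$ is convex on $[s-1,\infty)$ and agrees with the integer binomial at all nonnegative integers, and the case $2e/n<s-1$ is trivial since then $e<(s-1)n/2<(s-1)n$. Your algebra in fact recovers the sharper bound $e\le\tfrac12\bigl[(t-1)^{1/s}n^{2-1/s}+(s-1)n\bigr]$, of which the stated lemma is a weakened form, so the conclusion follows with room to spare.
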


In the next remark, we explain how a non-existence result of bipartite Diophantine tuples could be combined with the K\"ov\'ari--S\'os--Tur\'an theorem to address Question~\ref{q}. 

\begin{rem}\label{rem:q}
Let $k\geq 2$ and $n \neq 0$. Let $\delta \in (0,1]$ be a real number and let $X$ be a subset of positive integers, such that among all subsets $\{a,b\}$ of $X$ with size $2$, there are at least $\delta \binom{|X|}{2}$ pairs of them such that $ab+n$ is a perfect $k$-th power. Then we can build a graph $G$ with vertex set $X$, such that two distinct vertices $a,b\in X$ are adjacent if and only if $ab+n$ is a perfect $k$-th power. Then the given condition is equivalent to the condition that the number of edges of $G$ is at least $\delta \binom{|X|}{2}$.

Suppose that we can show that there is no bipartite Diophantine tuple $(A,B)$ with property $BD_k(n)$, where $2\leq s=|A|\leq |B|=t$ (in our actual applications, $s$ and $t$ might not be absolute constants, and could be slowly growing functions of $N$, where $A,B \subseteq[N]$). Then in the graph theory language, this means that $G$ does not contain $K_{s,t}$ as a subgraph, and thus \cref{lem:KST} implies that the number of edges of $G$ is at most $(t-1)^{1/s} |X|^{2-1/s}+(s-1)|X|$. By comparing the lower and upper bounds on the number of edges of $G$, we have
$$
\delta \binom{|X|}{2} \leq (t-1)^{1/s} |X|^{2-1/s}+(s-1)|X|.
$$
It follows that $|X|\leq (2/\delta)^s(t-1)+2s/\delta$. 
\end{rem}

The K\"ov\'ari--S\'os--Tur\'an theorem is essentially a consequence of the Cauchy–Schwarz inequality. For our purpose, we also need the following two helpful variants of Lemma~\ref{lem:KST}.

\begin{lem}[{\cite[Lemma 4]{BG04}}]\label{lem:KST2}
Let $G$ be a bipartite graph with bipartition $(A,B)$, where $|A|=n\leq |B|=m$ and the vertices of $G$ are labeled by positive integers. Suppose that for each $X \subseteq A$ and $Y \subseteq B$ (and for each $X \subseteq B$ and $Y \subseteq A$) with $|X|=r$ and $|Y|=t$ such that $\max_{x \in X} x<\min_{y \in Y} y$, the induced subgraph $G[X \cup Y]$ is not complete bipartite. Then the number of edges of $G$ is at most $2(t-1)^{1/r} mn^{1-1/r}+2(r-1)m$. 
\end{lem}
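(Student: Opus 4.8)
The plan is to mimic the double-counting proof of the K\"ovari--S\'os--Tur\'an theorem (\cref{lem:KST}), but to split the edges of $G$ according to the relative order of the labels of their two endpoints, so that each of the two ordered hypotheses (the one with $X\subseteq A,\ Y\subseteq B$ and the one with $X\subseteq B,\ Y\subseteq A$) gets used exactly once. Assuming first that the labels are pairwise distinct, call an edge $ab$ with $a\in A,\ b\in B$ a \emph{left edge of $b$} if $a<b$, and a \emph{left edge of $a$} if $b<a$. Writing $e_1$ for the number of left edges of vertices of $B$ and $e_2$ for the number of left edges of vertices of $A$, every edge is counted exactly once, so $e(G)=e_1+e_2$, and it suffices to bound $e_1$ and $e_2$ separately.

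To bound $e_1$, for each $b\in B$ set $L(b)=\{a\in A:\ ab\in E(G),\ a<b\}$ and $d_b=|L(b)|\le n$, and double count the pairs $(b,S)$ with $b\in B$ and $S\subseteq L(b)$, $|S|=r$. Summing over $b$ gives $\sum_{b\in B}\binom{d_b}{r}$ such pairs. On the other hand, for a fixed $r$-subset $S\subseteq A$, if $t$ distinct vertices $b_1,\dots,b_t\in B$ all satisfied $S\subseteq L(b_i)$, then taking $X=S$ and $Y=\{b_1,\dots,b_t\}$ gives $|X|=r$, $|Y|=t$, $\max_{x\in X}x<\min_{y\in Y}y$ and $G[X\cup Y]$ complete bipartite, contradicting the first hypothesis; hence $\sum_{b\in B}\binom{d_b}{r}\le (t-1)\binom{n}{r}\le (t-1)n^{r}/r!$. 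By convexity of $x\mapsto\binom{x}{r}$ on $[0,\infty)$ (extended by $0$ on $[0,r-1]$) and Jensen's inequality over the $m$ vertices of $B$, $\binom{e_1/m}{r}\le (t-1)n^{r}/(m\, r!)$. If $e_1/m\le r-1$ then $e_1\le (r-1)m$; otherwise, using $\binom{x}{r}\ge (x-(r-1))^{r}/r!$ for $x\ge r-1$, we get $(e_1/m-(r-1))^{r}\le (t-1)n^{r}/m$, hence
$$e_1\le (t-1)^{1/r}\,n\,m^{1-1/r}+(r-1)m.$$
Running the identical argument with the roles of $A$ and $B$ exchanged — now invoking the \emph{second} hypothesis (an $r$-set $X\subseteq B$ with labels below a $t$-set $Y\subseteq A$) and applying Jensen over the $n$ vertices of $A$ — yields $e_2\le (t-1)^{1/r}\,m\,n^{1-1/r}+(r-1)n$.

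Finally I would combine the two estimates using $n\le m$: this gives $n^{1/r}\le m^{1/r}$, hence $n\,m^{1-1/r}\le m\,n^{1-1/r}$, and also $(r-1)n\le (r-1)m$, so
$$e(G)=e_1+e_2\le 2(t-1)^{1/r}\,m\,n^{1-1/r}+2(r-1)m.$$
I expect the only delicate points to be bookkeeping: orienting each of the two ordered hypotheses correctly so that the counted $r$-set always carries the \emph{smaller} labels (the role of $X$), and performing the convexity/rounding step so that, after combining, the factor coming from $\binom{n}{r}$ appears as $n^{1-1/r}$ rather than $m^{1-1/r}$ — it is exactly the inequality $n\,m^{1-1/r}\le m\,n^{1-1/r}$ that collapses the asymmetric bound on $e_1$ into the symmetric target. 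If one does not simply assume distinct labels, the same proof works after fixing once and for all a total order refining the label order; the only extra edges to worry about are those between a vertex of $A$ and a vertex of $B$ with equal label, of which there are at most $\min\{n,m\}\le m$, and these are absorbed into the additive term.
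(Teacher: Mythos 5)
The paper does not prove this lemma itself; it cites it to \cite[Lemma~4]{BG04}, so there is no internal argument to compare against. Your proof is correct in its main line and is the natural (and presumably intended) one: partition the edges according to which endpoint carries the larger label, then run the K\"ovari--S\'os--Tur\'an double count once in each direction, invoking the ordered hypothesis with $X\subseteq A$, $Y\subseteq B$ to bound $e_1$ and the one with $X\subseteq B$, $Y\subseteq A$ to bound $e_2$. The Jensen step, the dichotomy $e_1/m\le r-1$ versus $e_1/m>r-1$, the estimate $\binom{x}{r}\ge (x-(r-1))^r/r!$, and the final symmetrization $n\,m^{1-1/r}\le m\,n^{1-1/r}$ (which is equivalent to $n\le m$) all check out and land exactly on the stated bound $2(t-1)^{1/r}mn^{1-1/r}+2(r-1)m$.

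The only soft spot is your closing remark about non-distinct labels. Refining the label order to a total order does not by itself repair the argument when labels repeat: the hypothesis demands a \emph{strict} label inequality $\max_{x\in X}x<\min_{y\in Y}y$, so a tie-broken edge can sit inside a forbidden $(X,Y)$-configuration that the hypothesis does not actually exclude. Your alternative of deleting the equal-label edges and adding them back yields $e(G)\le 2(t-1)^{1/r}mn^{1-1/r}+(r-1)(m+n)+\min\{n,m\}$, and this is not automatically at most $2(t-1)^{1/r}mn^{1-1/r}+2(r-1)m$ (it can fail when $rn>(r-1)m$), so the ``absorbed into the additive term'' claim needs justification. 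In the paper's applications the labels within each side are distinct integers and this corner case is vacuous, so the gap is cosmetic, but the sentence as written should not be asserted without checking.
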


\begin{lem}[{\cite[Lemma 2.4]{GS07}}]\label{lem:KST3}
Let $G$ be a graph with $n$ vertices, with the edges colored by $k$ colors. Suppose that $G$ does not contain a cycle through vertices $v_1,v_2,v_3,v_4$ such that the edges $v_1v_2$ and $v_4v_1$ have the same color, and the edges $v_2v_3$ and $v_3v_4$ have the same color. Then the number of edges of $G$ is at most $k^{1/2}n^{3/2}+kn$.    
\end{lem}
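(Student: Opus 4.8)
The plan is to follow the standard K\"ovari--S\'os--Tur\'an double-counting argument, but with the colored "cherry" (path of length two) playing the role that a single vertex plays in the classical proof. Given a graph $G$ on $n$ vertices whose edges are $k$-colored, I would count \emph{colored cherries}: triples $(v_1,v_3,v_2)$ where $v_1v_3$ and $v_3v_4$... more precisely, ordered pairs of edges $v_1v_3$, $v_3v_2$ sharing the middle vertex $v_3$ and having the \emph{same} color. Call such a configuration a monochromatic cherry with apex $v_3$ and feet $\{v_1,v_2\}$. The forbidden configuration in the hypothesis is exactly a $4$-cycle $v_1v_2v_3v_4$ in which $\{v_1v_2,v_4v_1\}$ is monochromatic and $\{v_2v_3,v_3v_4\}$ is monochromatic; equivalently, two monochromatic cherries with the same (unordered) pair of feet $\{v_1,v_3\}$ but distinct apexes $v_2\neq v_4$. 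Hence the hypothesis says: \emph{every unordered pair of vertices $\{u,w\}$ is the foot-set of at most one monochromatic cherry}, so the total number of monochromatic cherries is at most $\binom{n}{2}$.

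Next I would bound the number of monochromatic cherries from below in terms of the number of edges. For a vertex $v$, let $d_i(v)$ be the number of edges of color $i$ at $v$, so $\sum_i d_i(v)=\deg(v)$. The number of monochromatic cherries with apex $v$ is $\sum_{i=1}^k \binom{d_i(v)}{2}$. By convexity (Jensen), for fixed $\deg(v)$ this is minimized when the $d_i(v)$ are as equal as possible, giving $\sum_i \binom{d_i(v)}{2}\ge k\binom{\deg(v)/k}{2}=\frac{\deg(v)^2}{2k}-\frac{\deg(v)}{2}$. Summing over all $v$ and using $\sum_v \deg(v)=2e(G)$ together with Cauchy--Schwarz, $\sum_v \deg(v)^2\ge \frac{(2e(G))^2}{n}=\frac{4e(G)^2}{n}$, we obtain
\begin{equation*}
\#\{\text{monochromatic cherries}\}\ \ge\ \frac{1}{2k}\cdot\frac{4e(G)^2}{n}-\frac{1}{2}\cdot 2e(G)\ =\ \frac{2e(G)^2}{kn}-e(G).
\end{equation*}

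Combining the two bounds gives $\frac{2e(G)^2}{kn}-e(G)\le \binom{n}{2}\le \frac{n^2}{2}$, hence $\frac{2e(G)^2}{kn}\le \frac{n^2}{2}+e(G)$. I would now solve this quadratic inequality in $e=e(G)$: from $4e^2\le kn^3+2kne$ one gets $e\le \frac{kn+\sqrt{k^2n^2+4kn^3}}{4}\le \frac{kn+\sqrt{k^2n^2}+\sqrt{4kn^3}}{4}=\frac{kn}{2}+\frac{\sqrt{kn^3}}{2}$, which is comfortably bounded by $k^{1/2}n^{3/2}+kn$ (indeed even by the slightly stronger $k^{1/2}n^{3/2}/... $, but the stated bound certainly follows). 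The only mild subtlety — and the step I would be most careful about — is the Jensen/convexity step when $\deg(v)<k$ (so the "equal split" is fractional) and the harmless $-e(G)$ lower-order terms; these only help by a bounded amount and are absorbed into the $kn$ term, so no genuine obstacle arises. One should also double-check that the forbidden configuration is genuinely "two cherries with the same foot-set and different apexes" and not accidentally also ruling out, e.g., a monochromatic triangle — but a triangle $v_1v_2v_3$ would need all three edges the same color to create such a pattern, and even then it corresponds to a single cherry per foot-pair, so the count $\le\binom n2$ stands.
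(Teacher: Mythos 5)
Your proof is correct, and since the paper only cites this lemma from Gyarmati--S\'ark\"ozy without reproducing a proof, there is no internal proof to compare against; the cherry-counting double count you give is indeed the standard (and almost certainly the cited source's) argument. The one thing to fix is a labeling slip when you describe the forbidden configuration: in the $4$-cycle $v_1v_2v_3v_4$ the two monochromatic cherries have apexes $v_1$ and $v_3$ and common foot-set $\{v_2,v_4\}$, not apexes $v_2,v_4$ and feet $\{v_1,v_3\}$ as written --- harmless, but worth correcting since the identification of ``apex'' with ``vertex incident to the two equal-colored edges'' is exactly what makes the count $\le\binom n2$ work. Everything else (Jensen on $\sum_i\binom{d_i(v)}{2}$, Cauchy--Schwarz on degrees, solving the resulting quadratic in $e(G)$) is sound, and as you note the convexity step is vacuously fine when $\deg(v)<k$ because the right-hand side is then nonpositive while the left-hand side is nonnegative.
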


\subsection{Larger sieve}
We will use sieve methods in our proofs. In particular, we will use a few variants of the larger sieve. We first recall Gallagher's larger sieve \cite{G71}.
\begin{lem}[Gallagher's larger sieve]\label{lem:GS}  
Let $N \in \N$ and $A\subseteq\{1,2,\ldots, N\}$. Let ${\mathcal P}$ be a set of primes. For each prime $p \in {\mathcal P}$, let $A_p=A \pmod{p}$. For any $1<Q\leq N$, we have
$$
 |A|\leq \frac{\underset{p\leq Q, p\in \mathcal{P}}\sum\log p - \log N}{\underset{p\leq Q, p \in \mathcal{P}}\sum\frac{\log p}{|A_p|}-\log N},
$$
provided that the denominator is positive.
\end{lem}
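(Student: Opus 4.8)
The plan is to count, for each prime $p \in \mathcal{P}$ with $p \le Q$, the number of ordered pairs $(a,a') \in A \times A$ with $a \equiv a' \pmod p$, and then compare a lower bound coming from the pigeonhole principle on residue classes with an upper bound coming from the trivial observation that two integers in $\{1,\dots,N\}$ congruent modulo many primes must be equal. Concretely, I would introduce the quantity
$$
S := \sum_{\substack{p \le Q \\ p \in \mathcal{P}}} (\log p) \cdot \#\{(a,a') \in A \times A : a \equiv a' \pmod p\}.
$$

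First I would establish the lower bound on $S$. For a fixed prime $p$, the set $A$ is distributed among $|A_p|$ nonempty residue classes modulo $p$; by Cauchy--Schwarz (or convexity of $x \mapsto x^2$), the number of ordered congruent pairs is at least $|A|^2 / |A_p|$. Summing with weights $\log p$ gives
$$
S \ge |A|^2 \sum_{\substack{p \le Q \\ p \in \mathcal{P}}} \frac{\log p}{|A_p|}.
$$
Next I would establish the upper bound on $S$ by switching the order of summation: write $S = \sum_{(a,a') \in A\times A} \sum_{p} \log p$, where the inner sum is over primes $p \in \mathcal{P}$, $p \le Q$, dividing $a - a'$. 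If $a = a'$ the inner sum is over all such primes, contributing $|A| \sum_{p \le Q,\, p \in \mathcal{P}} \log p$. If $a \ne a'$, then $\prod_{p \mid a-a'} p \le |a - a'| \le N$, so the inner sum is at most $\log N$; there are at most $|A|^2$ such pairs, but more carefully $|A|^2 - |A| \le |A|^2$ of them, contributing at most $|A|^2 \log N$. Hence
$$
S \le |A| \sum_{\substack{p \le Q \\ p \in \mathcal{P}}} \log p + |A|^2 \log N.
$$

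Combining the two bounds yields
$$
|A|^2 \sum_{\substack{p \le Q \\ p \in \mathcal{P}}} \frac{\log p}{|A_p|} \le |A| \sum_{\substack{p \le Q \\ p \in \mathcal{P}}} \log p + |A|^2 \log N,
$$
and dividing through by $|A|$ and rearranging gives
$$
|A| \left( \sum_{\substack{p \le Q \\ p \in \mathcal{P}}} \frac{\log p}{|A_p|} - \log N \right) \le \sum_{\substack{p \le Q \\ p \in \mathcal{P}}} \log p - \log N,
$$
which is exactly the claimed inequality once we divide by the (assumed positive) denominator. The only genuinely delicate point is the bookkeeping of the diagonal versus off-diagonal pairs and making sure the $\log N$ on the left-hand side (rather than something slightly larger) comes out correctly; this is handled by noting $|A|^2 - |A| < |A|^2$ so the diagonal term can be absorbed cleanly, and I would double-check that the inequality $\prod_{p\mid m} p \le |m|$ for $m = a-a' \ne 0$ with $|m| \le N$ is applied with the right constant. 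There is no serious obstacle here — the argument is a textbook application of Cauchy--Schwarz plus a counting swap.
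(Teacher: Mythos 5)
Your overall approach is exactly the standard proof of Gallagher's larger sieve (the paper does not reproduce a proof; it simply cites Gallagher~\cite{G71}), so the method matches. However, there is a real arithmetic slip in the write-up that prevents the displayed chain from yielding the stated inequality. You bound $S \le |A|\sum_{p\le Q,\, p\in\mathcal{P}}\log p + |A|^2\log N$; dividing by $|A|$ and rearranging gives only
$$
|A|\biggl(\sum_{\substack{p\le Q\\ p\in\mathcal{P}}}\frac{\log p}{|A_p|}-\log N\biggr)\le \sum_{\substack{p\le Q\\ p\in\mathcal{P}}}\log p,
$$
with no $-\log N$ on the right-hand side, which is strictly weaker than the lemma. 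To get the stated numerator you must retain the exact off-diagonal count $|A|^2-|A|$ that you mention in passing and then discard: with $S \le |A|\sum_{p\le Q,\,p\in\mathcal{P}}\log p + (|A|^2-|A|)\log N$, dividing by $|A|$ produces the extra $-\log N$, and rearranging gives the claimed bound. Your closing remark that the delicate point ``is handled by noting $|A|^2-|A|<|A|^2$ so the diagonal term can be absorbed cleanly'' has the logic reversed: absorbing $|A|^2-|A|$ up to $|A|^2$ is precisely what loses the needed term; you must \emph{not} absorb it. Everything else (Cauchy--Schwarz lower bound $\ge |A|^2/|A_p|$ per prime, the radical bound $\sum_{p\mid m}\log p\le\log|m|\le\log N$ for $0\neq m=a-a'$ with $|m|<N$, the swap of summation) is correct.
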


Croot and Elsholtz \cite{CE04} proved several variants of Gallagher's larger sieve. In particular, the following variant refines Lemma~\ref{lem:GS} in certain ranges. Indeed, in the proof of Theorem~\ref{thm:thm1}, this variant outweighs the original version.

\begin{lem}[{\cite[Theorem 3]{CE04}}]\label{lem:GS2}
Let $N \in \N$ and $A\subseteq\{1,2,\ldots, N\}$. Let ${\mathcal P} \subseteq [2,Q]$ be a set of primes. For each prime $p \in {\mathcal P}$, let $A_p=A \pmod{p}$. For any $1<Q\leq N$, we have
$$
 |A|\leq \max\biggl\{Q, \frac{23N \exp\big(\sum_{p\in \mathcal{P}}\frac{\log p}{p}\big)}{\exp\big(\sum_{p\in \mathcal{P}}\frac{\log p}{|A_p|}\big)}\biggr\}.
$$
\end{lem}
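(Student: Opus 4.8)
\textbf{Proof proposal for Lemma~\ref{lem:GS2}.}

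The plan is to reduce the statement to Gallagher's larger sieve (Lemma~\ref{lem:GS}) by a dyadic-type optimization: rather than comparing $\sum_{p}\log p$ with $\log N$ additively, one should work with a weighted sub-collection of $\mathcal P$ whose total logarithmic weight is calibrated so that the denominator in Lemma~\ref{lem:GS} stays bounded away from zero, and then track the resulting bound through exponentials instead of through a difference of logarithms. Concretely, I would set $S = \sum_{p\in\mathcal P}\frac{\log p}{p}$ and $T = \sum_{p\in\mathcal P}\frac{\log p}{|A_p|}$, noting that since $A_p = A \bmod p$ we always have $|A_p|\le p$, hence $T \ge S$; this is the inequality that makes the claimed bound nontrivial. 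The heart of the argument is a case distinction on the size of $T$ relative to $2\log N$.

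First, suppose $T \ge 2\log N$. Then in Lemma~\ref{lem:GS} with the full prime set $\mathcal P$ (legitimate since $\mathcal P\subseteq[2,Q]$), the denominator $T - \log N \ge \tfrac12 T$ is positive, and since $|A_p|\ge 1$ gives $\sum_{p\in\mathcal P}\log p \le$ something controlled, I would instead use the cruder bound $|A| \le \frac{\sum_{p\in\mathcal P}\log p}{T - \log N}$; combined with $\sum \log p \le \sum \frac{p}{|A_p|}\log p$... actually the cleaner route is: from Lemma~\ref{lem:GS}, $|A|(T-\log N) \le \sum_{p}\log p - \log N \le \sum_p \frac{\log p}{|A_p|}\cdot |A_p| - \log N$, and bounding $|A_p| \le \min(|A|,p)$. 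The slick way the authors (Croot--Elsholtz) surely intend is the following identity-free estimate: one shows $\log|A| \le \log N + S - T + O(1)$ by noting $\sum_{p\in\mathcal P}\frac{\log p}{|A_p|} - \sum_{p\in\mathcal P}\frac{\log p}{|A|} \le \sum_{p\in\mathcal P}\log p \cdot(\frac{1}{|A_p|}-\frac{1}{|A|})$ and using $\sum_{p}\frac{\log p}{|A|} = \frac{1}{|A|}\sum_p\log p$, then feeding the elementary bound $\sum_{p\le Q}\log p \le$ (a linear-in-$Q$ quantity) — but this reintroduces $Q$, which is exactly why $\max\{Q,\cdot\}$ appears. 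So the real structure is: either $|A|\le Q$ and we are done, or $|A| > Q \ge p$ for all $p\in\mathcal P$, in which case $|A_p| \le p < |A|$ for every $p$, and now Gallagher's sieve can be applied after discarding enough primes to control the numerator.

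For the second (main) case $|A| > Q$: here $|A_p|\le p$ for all $p\in\mathcal P$. I would apply Lemma~\ref{lem:GS} and estimate $\log|A| \le \log\!\big(\sum_{p}\log p\big) - \log\!\big(T - \log N\big)$ — but the cleaner manipulation avoiding logs of sums is to start from Gallagher in the multiplicative form: $|A|\cdot\prod_{p}|A_p|^{\log p / ?}$... The cleanest path, and the one I expect works, is the \emph{weighted AM} trick: Lemma~\ref{lem:GS} is equivalent to $|A| \le N \exp\!\big(-\sum_{p}(\tfrac{1}{|A_p|}-\tfrac{1}{N})\log p\big)$ after a convexity step, and using $|A_p|\le p$ in the "loss" term $\sum_p \tfrac{\log p}{|A_p|}$ while keeping $\tfrac{\log p}{N}$ negligible, one lands on $|A| \le N\exp(S - T)$ up to the explicit constant $23$, which absorbs the $-\log N$ correction term and the passage through Lemma~\ref{lem:GS}'s denominator. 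The main obstacle, and where I would spend the most care, is tracking exactly how the additive $-\log N$ in Gallagher's denominator turns into the clean multiplicative factor $N$ and where the numerical constant $23$ comes from — this requires either splitting $\mathcal P$ into primes above and below $\sqrt N$ (or some threshold) and handling the small primes by the trivial bound $|A|\le N$, then applying Gallagher only to the large primes where $\sum\log p$ is comparable to $\log N$ plus the positive denominator — so I would consult the structure of \cite[Theorem 3]{CE04} for the precise threshold and constant rather than rederive $23$ from scratch.
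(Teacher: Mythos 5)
This lemma is cited by the paper directly from \cite[Theorem~3]{CE04}; the paper itself contains no proof, so the object of comparison is the Croot--Elsholtz argument. Your sketch does not constitute a proof: it strings together several partial ideas (a case distinction on $T$ versus $2\log N$, a pivot to $|A|\leq Q$ versus $|A|>Q$, an unfinished ``weighted AM trick,'' a tentative split of $\mathcal P$ at $\sqrt N$), none carried to completion, and you explicitly defer the derivation of the constant $23$ to the reference. More seriously, the core strategy --- deducing the statement from Gallagher's larger sieve, Lemma~\ref{lem:GS} --- has a structural obstacle. Writing $S=\sum_{p\in\mathcal P}\log p/p$ and $T=\sum_{p\in\mathcal P}\log p/|A_p|$, Gallagher's inequality is a ratio of two additive quantities and only applies when $T-\log N>0$; even when it does apply, the bound degenerates as $T\downarrow\log N$. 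The Croot--Elsholtz bound is multiplicative and remains meaningful when $T\leq\log N$, which is precisely the delicate regime exploited in the proof of Theorem~\ref{thm:thm1}, where $T$ is asymptotic to $\log N$ and the Gallagher denominator is $o(\log N)$ of uncertain sign. No amount of calibration, thresholding, or constant-tracking can recover the multiplicative estimate from Lemma~\ref{lem:GS} in that regime.

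The actual Croot--Elsholtz argument is of Chinese-Remainder type, not a Gallagher reduction. Set $m=\prod_{p\in\mathcal P}p$. If $m\leq N$, every fiber of the reduction map $a\mapsto(a\bmod p)_{p\in\mathcal P}$ meets $\{1,\dots,N\}$ in at most $N/m+1\leq 2N/m$ points, so $|A|\leq 2N\prod_{p\in\mathcal P}|A_p|/p$; the elementary pointwise inequality $\log(|A_p|/p)\leq\tfrac{\log p}{p}-\tfrac{\log p}{|A_p|}$, valid for $1\leq|A_p|\leq p$ (by examining $x\mapsto\log x+\tfrac{\log p}{x}$, which attains its maximum on $[1,p]$ at $x=p$), then converts the product into $2N\exp(S-T)$. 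When $m>N$ one discards primes to shrink the modulus below $N$; controlling the resulting loss is exactly where the threshold $Q$ and the numerical constant $23$ arise. If your aim is to reprove the lemma rather than cite it, this CRT route is the one to pursue; the Gallagher reduction is a dead end.
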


When one applies larger sieves, one usually needs to apply the prime number theorem for arithmetic progressions. However, in our applications, the modulus is not always fixed, so we need the following quantitative version of Linnik's theorem; see, for example \cite[Corollary 18.8]{IK04}. %And the second one is Brun-Titchmarsh inequality; see for example \cite[Theorem 6.6]{IK04}.

\begin{lem}[Quantitative Linnik's theorem]\label{lem:Linnik}
There exist positive constants $q_0$ and $L$ such that whenever $q\geq q_0$ is an integer, $x\geq q^L$, and $a$ is an integer with $\gcd(a,q)=1$, then 
$$
\sum_{\substack{p \leq x\\ p \equiv a \pmod q}} \log p\gg \frac{x}{\phi(q)\sqrt{q}},
$$
where the implied constant is absolute.
\end{lem}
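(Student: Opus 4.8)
The plan is to derive this from the standard machinery behind Linnik's theorem on primes in arithmetic progressions; the statement as given is a deliberately weakened, fully \emph{effective} form of it, and for the clean version with the genuine $\sqrt{q}$ one can also cite \cite{IK04} directly. First I would replace the prime sum by the Chebyshev function $\psi(x;q,a)=\sum_{n\le x,\ n\equiv a\,(q)}\Lambda(n)$. Since $\sum_{p\le x,\ p\equiv a\,(q)}\log p=\psi(x;q,a)+O(\sqrt{x}\log x)$ and the main term I aim to extract is $\gg x/(\phi(q)\sqrt{q})\gg x^{1-3/(2L)}$, the prime-power contribution and the $O(\sqrt{x}\log x)$ error are negligible once $L$ is large. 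By Perron's formula followed by a contour shift one has, for any $2\le T\le x$,
$$
\psi(x;q,a)=\frac{x}{\phi(q)}-\frac{1}{\phi(q)}\sum_{\chi\bmod q}\overline{\chi}(a)\sum_{\substack{L(\rho,\chi)=0\\ 0<\Re\rho<1,\ |\Im\rho|\le T}}\frac{x^{\rho}}{\rho}+O\!\left(\frac{x\log^{2}(qx)}{T}+\log x\right),
$$
so the whole problem reduces to bounding the inner sum over nontrivial zeros, uniformly in $\chi\bmod q$.

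The three classical inputs I would invoke are: (i) the zero-free region — there is an absolute $c_{1}>0$ such that $\prod_{\chi\bmod q}L(s,\chi)$ has no zero with $\Re s>1-c_{1}/\log(q(|\Im s|+2))$, with the sole possible exception of one simple real zero $\beta_{1}>1-c_{1}/\log q$ attached to a unique real character $\chi_{1}$; (ii) Gallagher's log-free zero-density estimate — for $T\ge 2$ and $\tfrac12\le\alpha\le1$, $\sum_{\chi\bmod q}N(\alpha,T,\chi)\ll(qT)^{c_{2}(1-\alpha)}$, where $N(\alpha,T,\chi)$ counts zeros of $L(s,\chi)$ with $\Re s\ge\alpha$, $|\Im s|\le T$ and $c_{2}$ is absolute; and (iii) the Deuring--Heilbronn repulsion phenomenon — if $\beta_{1}$ exists, every other zero $\rho=\beta+i\gamma$ of $\prod_{\chi}L(s,\chi)$ satisfies $1-\beta\gg\frac{1}{\log(q(|\gamma|+2))}\log\!\big(\tfrac{1}{(1-\beta_{1})\log q}\big)$. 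I would take $T$ a fixed small power of $x$ so that $O(x\log^{2}(qx)/T)=O(x^{1-c}/\phi(q))$, split the remaining zeros into dyadic ranges according to $1-\Re\rho$, and bound $\big|\sum_{\rho}x^{\rho}/\rho\big|$ by $x$ times $\sum_{j}x^{-\delta_{j}}(qT)^{c_{2}\delta_{j}}$ via (ii); for $x\ge q^{L}$ with $L$ large this geometric-type sum is $O(x^{1-c})$. When there is no exceptional zero this already gives $\psi(x;q,a)=\frac{x}{\phi(q)}+O(x^{1-c}/\phi(q))\gg x/\phi(q)$, far stronger than needed.

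The delicate case, which is the real obstacle, is when the exceptional zero $\beta_{1}$ is present: its contribution $-\overline{\chi_{1}}(a)x^{\beta_{1}}/(\phi(q)\beta_{1})$ cannot be discarded, and when $\chi_{1}(a)=1$ it nearly cancels the main term. Using $\beta_{1}>1-c_{1}/\log q$ and the elementary inequality $1-e^{-u}\gg\min(1,u)$, one checks that
$$
\frac{x}{\phi(q)}-\frac{x^{\beta_{1}}}{\phi(q)\beta_{1}}\ \gg\ \frac{x}{\phi(q)}\,\min\!\big(1,(1-\beta_{1})\log x\big),
$$
while the mere existence of $\beta_{1}$ forces, through (iii), all other zeros so far to the left that the residual zero sum is still $O(x^{1-c}/\phi(q))$, hence dominated by the right-hand side above. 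To finish I need an \emph{effective} lower bound for $1-\beta_{1}$, and this is exactly where the $\sqrt{q}$ in the statement comes from: Dirichlet's class number formula gives $L(1,\chi_{1})\gg q^{-1/2}$, whence Page's effective bound $1-\beta_{1}\gg q^{-1/2}(\log q)^{-2}$ with an absolute constant, so $(1-\beta_{1})\log x\gg L\,q^{-1/2}(\log q)^{-1}$ and therefore $\psi(x;q,a)\gg x/(\phi(q)\sqrt{q})$, up to inessential logarithmic factors (alternatively, invoking Siegel's theorem in place of Page's bound recovers the sharp $\psi(x;q,a)\gg x/\phi(q)$ but with an \emph{ineffective} constant — stating the lemma with the $\sqrt{q}$ loss is precisely what keeps everything effective, as required for our applications). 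The hardest part of the whole argument is the bookkeeping in combining the density estimate (ii) with the repulsion estimate (iii), together with checking that, exceptional zero or not, the non-exceptional zeros never swamp the surviving main term.
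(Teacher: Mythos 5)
The paper does not actually supply a proof of this lemma; it is stated and immediately attributed to \cite[Corollary 18.8]{IK04}, and everything downstream just invokes the statement as a black box. So there is no in-paper argument to compare against. What you have written is, in outline, exactly the standard derivation behind that corollary (Chapter 18 of Iwaniec--Kowalski): pass to $\psi(x;q,a)$, apply Perron and the explicit formula, control nonexceptional zeros by combining the classical zero-free region with a log-free zero-density bound and the Deuring--Heilbronn repulsion, and handle the possible exceptional zero by an effective lower bound on $1-\beta_1$ stemming from the class-number bound $L(1,\chi_1)\gg q^{-1/2}$. That is the right architecture, and it is the reason the $\sqrt{q}$ appears in the statement.

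There is, however, one place where the argument as you phrased it does not quite deliver the stated bound, and you flag it yourself as an ``inessential logarithmic factor'': feeding Page's bound $1-\beta_1 \gg q^{-1/2}(\log q)^{-2}$ (which is what the crude mean-value estimate $L(1,\chi_1)\ll(1-\beta_1)(\log q)^2$ produces) into $(1-\beta_1)\log x \geq (1-\beta_1)L\log q$ yields $\psi(x;q,a)\gg x/(\phi(q)\sqrt{q}\log q)$, which is weaker than the claimed $x/(\phi(q)\sqrt{q})$. For the applications in this paper that discrepancy is harmless, but the lemma is stated without the extra $\log q$, so one cannot just wave it away. The fix is standard: instead of the two-derivative mean-value bound, use the sharper relation coming from Hadamard factorization together with the classical inequality for $-\operatorname{Re}(L'/L)$, which gives $L(1,\chi_1)\ll (1-\beta_1)\log q$, i.e.\ $1-\beta_1\gg L(1,\chi_1)/\log q \gg 1/(\sqrt{q}\log q)$. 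Then $(1-\beta_1)\log x \gg L/\sqrt{q}$ and the $\log q$ cancels, recovering $\psi(x;q,a)\gg x/(\phi(q)\sqrt{q})$ exactly as asserted. With that single replacement your sketch closes the gap; as written, it proves a statement that is off by a factor of $\log q$, which is not what the lemma says.
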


Combining Lemma~\ref{lem:Linnik} with the prime number theorem for arithmetic progressions with modulus $q\leq q_0$ bounded, we obtain the following corollary.

\begin{cor}\label{cor:Linnik}
There exists positive constants $c$ and $L$ such that whenever $q$ is a positive integer, $x\geq q^L$, and $a$ is an integer with $\gcd(a,q)=1$, then 
$$
\sum_{\substack{p \leq x\\ p \equiv a \pmod q}} \log p \geq \frac{cx}{\phi(q)\sqrt{q}}.
$$
\end{cor}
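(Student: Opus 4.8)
The plan is to split the argument according to the size of the modulus $q$, handling large moduli via the quantitative Linnik's theorem (Lemma~\ref{lem:Linnik}) and small moduli via the classical prime number theorem for arithmetic progressions. Let $q_0$ and $L$ be the constants provided by Lemma~\ref{lem:Linnik}. First I would fix $L' = \max\{L, 2\}$ (ensuring the exponent is at least $2$, which will be convenient but is not essential) and prove the statement with this $L'$ in place of $L$; the final constant $L$ in the corollary will be this $L'$.

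For the case $q \geq q_0$: since $x \geq q^{L'} \geq q^{L}$, Lemma~\ref{lem:Linnik} applies directly and gives $\sum_{p \leq x,\, p \equiv a \pmod q} \log p \gg x/(\phi(q)\sqrt q)$, i.e.\ there is an absolute constant $c_1 > 0$ with the displayed lower bound holding with $c = c_1$. For the case $1 \leq q < q_0$: here $q$ ranges over a finite set, so $\phi(q)$ and $\sqrt q$ are bounded by absolute constants (at most $q_0$). By the prime number theorem for arithmetic progressions, for each such fixed $q$ and each residue class $a$ coprime to $q$, we have $\sum_{p \leq x,\, p \equiv a \pmod q} \log p \sim x/\phi(q)$ as $x \to \infty$; in particular this sum is $\geq \tfrac{1}{2}\, x/\phi(q) \geq c_2 x \geq c_2\, x/(\phi(q)\sqrt q)$ for some absolute $c_2 > 0$, once $x$ exceeds an absolute threshold $x_0(q_0)$. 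To remove the dependence on this threshold, note that if instead $x \leq x_0$, then $x \geq q^{L'}$ forces $q$ to lie in a bounded range and forces $x$ into a bounded range as well; since $\gcd(a,q)=1$ and $q < q_0$, the class $a \bmod q$ contains at least one prime $p_1 < q_0$, and enlarging $L'$ if necessary (or simply absorbing into the constant) we may assume $p_1 \leq x$, so the sum is at least $\log p_1 > 0$; dividing by the bounded quantity $x/(\phi(q)\sqrt q)$ shows the bound holds with a suitably small absolute constant $c_3 > 0$. Taking $c = \min\{c_1, c_2, c_3\}$ completes the proof.

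The only mildly delicate point is the bookkeeping in the small-modulus case: one must ensure that the implied constant genuinely does not depend on $q$ even though the prime number theorem for arithmetic progressions is being invoked for each $q$ separately. This is harmless because there are only finitely many $q < q_0$, so one simply takes the minimum of the finitely many constants (and the maximum of the finitely many thresholds) that arise. I do not anticipate any real obstacle; this corollary is a routine packaging of two standard results into a single clean inequality valid for all positive integers $q$, with the two regimes $q \geq q_0$ and $q < q_0$ glued together by taking the worse of the two constants.
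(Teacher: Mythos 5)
Your approach is exactly the one the paper intends: the paper gives only a one-sentence sketch, namely ``Combining Lemma~\ref{lem:Linnik} with the prime number theorem for arithmetic progressions with modulus $q\leq q_0$ bounded,'' and your split into $q\geq q_0$ (quantitative Linnik) and $q<q_0$ (PNT for APs, finitely many moduli, take the worst constant) is precisely that. One small slip in your bookkeeping for the bounded-$x$ regime: the assertion that for $q<q_0$ and $\gcd(a,q)=1$ ``the class $a\bmod q$ contains at least one prime $p_1<q_0$'' is false in general (e.g.\ the least prime $\equiv 4\pmod 7$ is $11$); what is true, and what your argument actually needs, is only that the least such prime is bounded by an absolute constant $P_0$ depending on $q_0$, since there are finitely many pairs $(q,a)$. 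Even so, the corner case $q=1$ with $x$ close to $1$ cannot be repaired by enlarging $L$ (since $1^{L}=1$ always), so the inequality as literally stated fails there for any $c>0$; this is an artifact of the paper's formulation rather than of your proof, and it is immaterial because in both of the paper's applications $x=Q$ is taken large.
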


%\begin{lem}[Brun-Titchmarsh inequality]\label{lem:BT}
%For $\gcd(a,q)=1$ and $q\leq x$, we have
%$$\pi(x;q,a)\ll \frac{x}{\phi(q) \log(x/q)},$$
%where the implied constant is absolute.
%\end{lem}

The following auxiliary estimate will be useful.

\begin{lem}\label{lem:badprime}
Let $\mathcal{P}$ be a finite set of primes. Then we have
$\sum_{p \in \mathcal{P}} \frac{\log p}{p}\ll \log |\mathcal{P}|$.    
\end{lem}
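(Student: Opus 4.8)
The plan is to split $\mathcal{P}$ at the threshold $y := |\mathcal{P}|$ and handle the small primes and the large primes separately, using Mertens' theorem for the former and crude monotonicity for the latter. We may assume $|\mathcal{P}|$ exceeds any fixed constant, since for bounded $|\mathcal{P}|$ the sum $\sum_{p\in\mathcal{P}}(\log p)/p$ is trivially bounded; and the estimate should be understood for $|\mathcal{P}|\geq 2$, the case $|\mathcal{P}|\leq 1$ being degenerate since then $\log|\mathcal{P}|$ vanishes.

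First I would write $\mathcal{P} = \mathcal{P}' \cup \mathcal{P}''$, where $\mathcal{P}' = \{p\in\mathcal{P}: p\leq y\}$ and $\mathcal{P}'' = \{p\in\mathcal{P}: p> y\}$. For the small primes, Mertens' theorem in the form $\sum_{p\leq t}(\log p)/p = \log t + O(1)$ gives
$$
\sum_{p\in\mathcal{P}'}\frac{\log p}{p} \leq \sum_{p\leq y}\frac{\log p}{p} = \log y + O(1) = \log|\mathcal{P}| + O(1).
$$
For the large primes, I would use that $t\mapsto (\log t)/t$ is decreasing for $t\geq 3$ (its derivative $(1-\log t)/t^2$ is negative once $t>e$), so every term with $p>y\geq 3$ satisfies $(\log p)/p \leq (\log y)/y$; since $|\mathcal{P}''|\leq |\mathcal{P}| = y$, this yields
$$
\sum_{p\in\mathcal{P}''}\frac{\log p}{p} \leq y\cdot\frac{\log y}{y} = \log y = \log|\mathcal{P}|.
$$
Adding the two displays gives $\sum_{p\in\mathcal{P}}(\log p)/p \ll \log|\mathcal{P}|$, as claimed.

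There is no genuine obstacle here: the argument is just a single dyadic-type split together with Mertens' estimate. The only point needing a moment's care is that $(\log t)/t$ fails to be monotone at $t=2$ (indeed $(\log 2)/2 < (\log 3)/3$), which is harmless since we only invoke monotonicity on $[y,\infty)$ with $y\geq 3$. An alternative route giving the same bound is to observe that among all $k$-element sets of primes the sum $\sum_{p}(\log p)/p$ is maximized by the first $k$ primes $p_1,\dots,p_k$, whence $\sum_{p\in\mathcal{P}}(\log p)/p \leq \sum_{p\leq p_k}(\log p)/p = \log p_k + O(1) \ll \log k$, using the bound $p_k\ll k\log k$ coming from the Chebyshev estimate $\pi(x)\gg x/\log x$.
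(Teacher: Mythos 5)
Your main argument is correct, and it takes a slightly different route from the paper. The paper replaces the primes in $\mathcal{P}$ by the first $m$ odd primes (where $m = |\mathcal{P}|$), bounding the sum by $\sum_{p\leq p_m}(\log p)/p \ll \log p_m$, and then invokes the Chebyshev-type estimate $p_m \ll m\log m$ to conclude. You instead split $\mathcal{P}$ at the threshold $y = |\mathcal{P}|$, apply Mertens below the threshold, and use the monotone decrease of $(\log t)/t$ together with a trivial count above it; this sidesteps any estimate for the size of the $m$-th prime. Both are clean two-line arguments resting on Mertens; your split avoids needing $p_m \ll m\log m$, while the paper's replacement-by-smallest-primes step avoids the explicit case division. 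The ``alternative route'' you mention at the end is in fact essentially the paper's proof verbatim. Your remark that the statement should be read for $|\mathcal{P}|\geq 2$ (since $\log 1 = 0$) is a valid observation that the paper leaves implicit; in the paper's applications $|\mathcal{P}|$ is always large, so this never matters.
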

\begin{proof}
Let $\mathcal{P}=\{q_1,q_2, \ldots, q_m\}$, and let $p_1<p_2<\ldots<p_m$ be the first $m$ odd primes. Since the function $\frac{\log x}{x}$ is decreasing when $x \geq 3$, it follows that 
\[
\sum_{p  \in \mathcal{P}} \frac{\log p}{p}=\sum_{j=1}^{m} \frac{\log q_j}{q_j} \leq \frac{\log 2}{2}+\sum_{p \leq p_m} \frac{\log p}{p}\ll \log p_m\ll \log (m\log m)\ll \log |\mathcal{P}|.\qedhere
\]
\end{proof}

\section{Contribution of large elements}\label{sec:large}
In this section, we deduce upper bounds on the number of ``large" elements in Diophantine tuples with property $D_{\leq d}(n)$. Our proofs are inspired by various ideas in \cite{BG04, GSS02, GS07, Y24+, Y26}.

We begin by collecting some known results regarding upper bounds on the number of ``large" elements in Diophantine tuples with property $D_k(n)$.

The following lemma is due to Dujella~\cite{D02}.
\begin{lem}[Dujella]\label{lem:D_2}
Let $n$ be a nonzero integer. If $A \subseteq [|n|^3,+\infty)$ is a Diophantine tuple with property $D_2(n)$, then $|A|\leq 21$.
\end{lem}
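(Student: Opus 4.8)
The plan is to fix a small sub-configuration of $A$ and to bound the number of ways it can be extended. Write $A=\{a_1<a_2<\cdots<a_m\}$ with $a_1\ge|n|^3$; we may assume $m\ge4$, since otherwise there is nothing to prove. For every $j\ge4$ the four-element set $\{a_1,a_2,a_3,a_j\}$ has property $D_2(n)$ and all of its members are at least $|n|^3$. Hence the lemma follows from the statement: \emph{if $a,b,c$ are integers with $|n|^3\le a<b<c$ and $\{a,b,c\}$ has property $D_2(n)$, then there are at most $18$ integers $d>c$ for which $\{a,b,c,d\}$ has property $D_2(n)$}; this yields $m\le 3+18=21$. (The numerical constant is not optimized; any absolute bound on the number of extensions suffices for finiteness.)

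To count the admissible $d$'s, I would put $ad+n=x^2$, $bd+n=y^2$, $cd+n=z^2$ with $x,y,z>0$, and set $r=\sqrt{ab+n}$, $s=\sqrt{ac+n}$, $t=\sqrt{bc+n}$, and then separate the ``regular'' from the ``irregular'' extensions. The regular extensions are the at most two values $d=d_{\pm}$ produced by the classical quadruple identity $d_{\pm}=a+b+c+\tfrac{2}{n}\bigl(abc\pm rst\bigr)$ (with the compatible sign patterns $ad_{\pm}+n=(at\pm rs)^2$, and similarly for $b,c$), so these contribute at most $2$. For an irregular extension $d$, eliminating $d$ between pairs of the three defining equations gives simultaneous Pell-type relations, e.g. $cx^2-az^2=n(c-a)$ and $cy^2-bz^2=n(c-b)$, and one also has the elementary identities $(ab+n)(cd+n)-(ac+n)(bd+n)=n(d-a)(c-b)$ together with its two analogues. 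From these one extracts a gap principle: two distinct admissible $d$'s must be multiplicatively well separated, and the separation is sizeable precisely because $a,b,c$ exceed $|n|^3$; this caps the number of ``small'' irregular extensions by an absolute constant. The remaining ``large'' irregular extensions are controlled by a quantitative Diophantine-approximation input — a Thue--Siegel--Baker-type estimate, or the hypergeometric method, applied to the simultaneous approximations of $\sqrt{a/c}$ and $\sqrt{b/c}$ by $x/z$ and $y/z$ — whose constants can be kept uniform in $n$ thanks to $a\ge|n|^3$, leaving only boundedly many. Adding the three contributions gives the bound.

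I expect the ``large'' irregular case to be the crux: one has to run the simultaneous-approximation argument with constants that are genuinely uniform in $n$ (this is where the hypothesis $a\ge|n|^{3}$ does the work), and then verify that the ``small'' and ``large'' regimes overlap so that no irregular extension escapes the count. By contrast, the remaining ingredients — reducing to a fixed triple of large elements, writing down the Pell-type relations, the elementary gap identities, and the bookkeeping of the regular extensions $d_{\pm}$ — are routine.
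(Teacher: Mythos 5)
The paper offers no proof of this lemma: it is quoted verbatim as a result of Dujella (the reference is \cite{D02}), so there is no in-paper argument to compare against, and the expedient course is to cite Dujella as the authors do. Your outline does follow the broad architecture used for such results (fix a short initial segment $\{a_1,a_2,a_3\}$, bound the number of further extensions $d>a_3$ via a gap principle for moderate $d$ and a simultaneous Diophantine-approximation estimate for large $d$), and the reduction ``at most $18$ extensions of $\{a_1,a_2,a_3\}$ yields $m\le 21$'' is sound.

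Two things keep the sketch from being a proof. First, the ``regular extension'' $d_\pm=a+b+c+\tfrac{2}{n}(abc\pm rst)$ is an artifact of $n=1$: the identity you appeal to is in fact $a\,d_\pm+n=(at\pm rs)^2/n$ (and likewise for $b,c$), so unless $n$ is a perfect square this quantity is never an integer square, and for generic $n$ the $d_\pm$ are not even integers. There is no clean regular/irregular dichotomy for general $D(n)$-tuples, so the extensions have to be organized differently than in the classical $D(1)$ argument you are transplanting. Second, and decisively, the entire quantitative content of the bound --- a Rickert/Bennett-type simultaneous approximation theorem for $\sqrt{a/c}$ and $\sqrt{b/c}$ with constants made uniform in $n$ by exploiting $a\ge|n|^3$, together with the verification that the gap-principle window and the approximation window overlap so that no extension escapes --- is only gestured at. You flag this yourself as ``the crux,'' but that gesture \emph{is} the proof; what is written is a program for a proof, not an argument.
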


The following lemma is a special case of \cite[Proposition 4.1]{Y24} due to the second author.

\begin{lem}[\cite{Y24}]\label{lem:K23}
Let $k \geq 3$ and $n$ be a nonzero integer. Let $\ell=3$ if $k=3$, and $\ell=2$ if $k\geq 4$. There do not exist integers $a_1,a_2,\ldots, a_{\ell}$ and $b_1, b_2, \ldots, b_{s_k+1}$, such that $a_1<a_2<\ldots<a_{\ell}\leq b_1<b_2<\ldots<b_{s_k+1}$, $2|n|^{t_k} \leq b_1$,  and $a_ib_j+n$ is a perfect $k$-th power for all $1\leq i \leq \ell$ and $1\leq j \leq s_k+1$.    
\end{lem}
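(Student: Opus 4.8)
The lemma is quoted from \cite[Proposition 4.1]{Y24}, so in context one may simply invoke that proposition; I sketch the underlying argument, which is a gap-principle argument in the spirit of Dujella's treatment of $D_2(n)$ (compare \cref{lem:D_2}).

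Suppose, for contradiction, that integers $a_1<\cdots<a_\ell\le b_1<\cdots<b_{s_k+1}$ with the stated properties exist, and write $a_ib_j+n=y_{ij}^{\,k}$ with $y_{ij}\in\N$. The starting point is \emph{elimination}: for each pair $1\le i<i'\le\ell$ and each $j$, clearing $b_j$ from $a_ib_j+n=y_{ij}^{\,k}$ and $a_{i'}b_j+n=y_{i'j}^{\,k}$ gives
\[
a_{i'}\,y_{ij}^{\,k}-a_i\,y_{i'j}^{\,k}=(a_{i'}-a_i)\,n ,
\]
so each $b_j$ produces an integral point on the Thue-type curve $a_{i'}X^k-a_iY^k=(a_{i'}-a_i)n$. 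Since $b_j\ge b_1\ge 2|n|^{t_k}$, the product $a_ib_j$ dominates the shift $n$, and a routine expansion shows that $y_{i'j}/y_{ij}$ is an exceptionally good rational approximation, with denominator $y_{ij}\asymp(a_ib_j)^{1/k}$, to the algebraic number $\alpha_{i,i'}:=(a_{i'}/a_i)^{1/k}$, the error being $O\!\big(|n|\,\alpha_{i,i'}\,y_{ij}^{-k}\big)$.

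The contradiction then comes from playing two complementary estimates against each other. First, an effective bound on the size of solutions of $a_{i'}X^k-a_iY^k=(a_{i'}-a_i)n$ lying on the principal branch — obtainable from Baker's method, or more sharply from hypergeometric effective irrationality measures for $\sqrt[k]{a_{i'}/a_i}$ in the style of Bennett, giving $|\alpha_{i,i'}-p/q|\gg q^{-\lambda}$ with some $\lambda<k$ — forces each $y_{ij}$, and hence each $b_j$, to be bounded above in terms of $|n|$, $a_i$, $a_{i'}$ and $k$. Second, a \emph{gap principle}: for $b_j<b_{j'}$ one checks (using $n\neq 0$) that $y_{i'j}y_{ij'}\neq y_{i'j'}y_{ij}$, so this integer difference is $\ge 1$; combining it with the two approximation estimates shows that $b_{j'}$ is at least a fixed power of $b_j$ divided by a fixed power of $|n|$, i.e.\ the $b_j$ grow very rapidly. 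An upper bound on each $b_j$ together with such rapid growth caps the number of $b_j$ exceeding $2|n|^{t_k}$ at $s_k$, contradicting the hypothesis that there are $s_k+1$ of them. This works with $\ell=2$ for $k\ge 4$; for $k=3$ the degree-$3$ irrationality measures and the degree-$3$ gap principle are too weak with only two elements $a_i$, so one instead uses all three of $a_1,a_2,a_3$ simultaneously — working on the curve cut out by two of the eliminated equations — to recover enough rigidity, which is why $\ell=3$ there.

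The technical heart, and the reason the constants $r_k,s_k,t_k$ are defined case by case, is obtaining the effective bound in the first estimate uniformly enough: the exponent $\lambda$ and the implied constant attached to $\sqrt[k]{a_{i'}/a_i}$ depend on the position of $a_{i'}/a_i$ relative to $1$, and the borderline regimes (for example $a_{i'}/a_i$ a perfect power, or far from $1$) must be handled separately and balanced against the threshold $2|n|^{t_k}$; this bookkeeping is what pins down the numerical values. The complete analysis is carried out in \cite{Y24}.
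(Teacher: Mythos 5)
The paper does not prove this lemma---it is imported verbatim as a special case of \cite[Proposition 4.1]{Y24}, and no proof or proof sketch appears in the text. Your proposal correctly notes this and then reconstructs a plausible argument, so there is nothing in the paper itself to compare it against.

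As a reconstruction, the sketch is internally consistent and in the right spirit. The elimination identity $a_{i'}y_{ij}^k - a_i y_{i'j}^k = (a_{i'}-a_i)n$ is correct, the computation that $y_{i'j}/y_{ij}$ approximates $(a_{i'}/a_i)^{1/k}$ to within $O(|n|\,\alpha_{i,i'}\,y_{ij}^{-k})$ checks out, and the gap-principle step (using the monotonicity of $t\mapsto (a_{i'}t+n)/(a_i t+n)$ to force $y_{i'j}y_{ij'}\neq y_{i'j'}y_{ij}$, hence $|y_{i'j}y_{ij'}-y_{i'j'}y_{ij}|\geq 1$, hence rapid growth of the $b_j$) is the standard engine behind results like \cref{lem:D_2}. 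The interplay between an effective upper bound (from an irrationality measure for $\sqrt[k]{a_{i'}/a_i}$) and a lower-bound gap principle, balanced against the threshold $2|n|^{t_k}$, is indeed the mechanism that produces case-dependent constants such as $r_k,s_k,t_k$, and the shape $t_k=(k^2+k-4)/(k^2-6k+6)$ is of hypergeometric/Bennett type, lending credence to your identification of that ingredient. Your explanation of why $\ell=3$ is needed for $k=3$ (the cube-root measures and the degree-$3$ gap are individually too weak, so one exploits two eliminated relations among three $a$'s simultaneously) is plausible and consistent with the larger $r_3,s_3,t_3$.

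Two caveats. First, because the paper offers no proof, I cannot certify that \cite{Y24} actually runs Bennett-style hypergeometric measures rather than, say, a pure Baker-method bound or a different elementary device; the specific technique is your inference, not something verifiable from this manuscript. Second, your statement that the effective bound ``forces each $y_{ij}$, and hence each $b_j$, to be bounded above in terms of $|n|$, $a_i$, $a_{i'}$ and $k$'' needs the caveat you later add: hypergeometric measures only beat the Liouville exponent when $a_{i'}/a_i$ lies in a suitable range, and when $a_{i'}/a_i$ is far from $1$ a separate, more elementary argument (e.g.\ $y_{i'j}^k-y_{ij}^k\ge k\,y_{ij}^{k-1}$) must be substituted; you flag this but do not say how the two regimes are stitched together, which is precisely where the exact values of $s_k$ and $t_k$ are determined. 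Since those numerical values are part of the statement, a complete proof would have to carry out that bookkeeping, and your sketch (reasonably, given it is a sketch) does not.
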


\begin{lem}[{\cite[Proposition 4.3]{Y24}}]\label{lem:large}
Let $k \geq 3$ and let $n$ be a nonzero integer. $(A,B)$ be a bipartite Diophantine tuple with property $BD_{k}(n)$. If $\min \{|A|, |B|\} \geq r_k$, then in both sets $A$ and $B$, at most $s_k$ elements are at least $2|n|^{t_k}$.
\end{lem}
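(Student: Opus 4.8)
The only substantial ingredient I expect to need is \cref{lem:K23}, which forbids a ``staircase'': there is no configuration of $\ell$ integers $a_1<\cdots<a_\ell$ and $s_k+1$ integers $b_1<\cdots<b_{s_k+1}$ with $a_\ell\le b_1$, $2|n|^{t_k}\le b_1$, and $a_ib_j+n$ a perfect $k$-th power for all $i,j$, where $\ell=r_k-s_k$ (so $\ell=3$ when $k=3$ and $\ell=2$ when $k\ge 4$). The plan is to argue by contradiction, exploiting that this forbidden pattern is symmetric in its two ``sides'': since $ab+n=ba+n$ for $a\in A$ and $b\in B$, one is free to apply \cref{lem:K23} with the short side drawn from $A$ and the long side from $B$, or with the two interchanged. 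As the hypothesis $\min\{|A|,|B|\}\ge r_k$ is also symmetric in $A$ and $B$, it suffices to assume that $A$ has at least $s_k+1$ elements that are $\ge 2|n|^{t_k}$ and derive a contradiction; the statement for $B$ then follows by the mirror argument.

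So suppose $A':=\{a\in A:a\ge 2|n|^{t_k}\}=\{\alpha_1<\alpha_2<\cdots<\alpha_p\}$ with $p\ge s_k+1$, and set $\tau:=\alpha_{p-s_k}$, so that $\{\alpha_{p-s_k},\ldots,\alpha_p\}$ is a set of $s_k+1$ elements of $A$, each at least $\tau\ge 2|n|^{t_k}$. First I would pin down $B$ from below: if $B$ had $\ell$ elements $\le\tau$, then taking the $\ell$ smallest of them as the short side and $\{\alpha_{p-s_k},\ldots,\alpha_p\}$ as the long side would contradict \cref{lem:K23}. Hence $B$ has at most $\ell-1$ elements $\le\tau$, and therefore at least $|B|-(\ell-1)\ge r_k-\ell+1=s_k+1$ elements exceeding $\tau$ (each of which automatically exceeds $2|n|^{t_k}$). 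Let $\gamma_1$ be the least such element of $B$; then $B$ contains a set of $s_k+1$ elements each $\ge\gamma_1>2|n|^{t_k}$, and rerunning the argument with the roles of $A$ and $B$ swapped, \cref{lem:K23} now forces $A$ to have at most $\ell-1$ elements $\le\gamma_1$.

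To close the loop I would simply count the elements of $A$ below $\gamma_1$. Any $a\in A$ with $a>\gamma_1$ satisfies $a>\tau=\alpha_{p-s_k}$ and $a\ge 2|n|^{t_k}$, so $a$ is one of the $s_k$ elements $\alpha_{p-s_k+1},\ldots,\alpha_p$; hence $A$ has at least $|A|-s_k$ elements $\le\gamma_1$. Comparing with the bound $\le\ell-1$ obtained in the previous step yields $|A|-s_k\le\ell-1$, that is, $|A|\le s_k+\ell-1=r_k-1$, contradicting $|A|\ge r_k$. This proves $A$ has at most $s_k$ elements $\ge 2|n|^{t_k}$, and the symmetric argument gives the same for $B$.

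The argument is short once one fixes on the right mechanism, namely the two nested thresholds $\tau$ (the $(s_k+1)$-th largest ``large'' element of $A$) and $\gamma_1$ (the least element of $B$ lying above $\tau$), together with two applications of \cref{lem:K23} in opposite orientations. I do not anticipate a genuine obstacle; the one place that demands care is the final count, where one must include \emph{both} the elements of $A$ smaller than $2|n|^{t_k}$ \emph{and} the lower ``large'' elements $\alpha_1,\ldots,\alpha_{p-s_k}$ among the elements of $A$ below $\gamma_1$ --- it is exactly these $|A|-s_k$ elements, the ones not consumed by the long side, that force $|A|<r_k$.
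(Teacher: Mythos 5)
Your argument is correct, and it uses exactly the ingredient one would expect, namely \cref{lem:K23}. The two-sided ``ping-pong'' is clean: from $s_k+1$ large elements of $A$ you bound the number of elements of $B$ below the threshold $\tau=\alpha_{p-s_k}$ by $\ell-1$ (via \cref{lem:K23} with the short side in $B$), then since $|B|\ge r_k=\ell+s_k$ this hands you $s_k+1$ elements of $B$ above $\tau$, and one more application of \cref{lem:K23} (short side now in $A$, anchored at $\gamma_1$) caps the number of elements of $A$ below $\gamma_1$ by $\ell-1$. The closing count $|A|\le(\ell-1)+s_k=r_k-1$ then contradicts $|A|\ge r_k$. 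All the order constraints needed in \cref{lem:K23} ($a_\ell\le b_1$, $2|n|^{t_k}\le b_1$) are satisfied as you set things up, and the swap of roles between $A$ and $B$ is legitimate because $ab+n=ba+n$ and the bipartite property is symmetric.

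The paper itself does not reprove this lemma --- it is imported verbatim from \cite[Proposition 4.3]{Y24}, with \cref{lem:K23} likewise imported as \cite[Proposition 4.1]{Y24} --- so there is no in-paper proof to compare against. But your argument is a correct and self-contained derivation of \cref{lem:large} from \cref{lem:K23}, and it reflects the natural (and presumably the original) mechanism: the forbidden ``small-times-large'' configuration from \cref{lem:K23} applied in both orientations.
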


Next, we use~\cref{lem:K23} and Lemma~\ref{lem:KST2} to deduce the following proposition. %It generalizes \cite[Theorem 1]{BG04}.

\begin{prop}\label{prop:twoset}
Let $k\geq 3$ and let $n$ be a nonzero integer. Let $A,B$ be two finite sets of positive integers in $[2|n|^{17}, \infty)$ with $|A|\leq |B|$. Then the number of pairs $(a,b)$ such that $a\in A$, $b \in B$ and $ab+n$ is is a perfect $k$-th power is at most $7|B||A|^{1/2}$ when $k \geq 4$, and at most $8|B||A|^{2/3}$ when $k=3$.
\end{prop}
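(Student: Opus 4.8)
The plan is to recast the count as an edge count in a bipartite graph and then apply the ordered K\"ovari--S\'os--Tur\'an bound (\cref{lem:KST2}), using \cref{lem:K23} to verify its hypothesis. First I would form the bipartite graph $G$ on the two (disjoint) vertex classes $A$ and $B$, joining $a\in A$ to $b\in B$ exactly when $ab+n$ is a perfect $k$-th power, with each vertex labelled by the corresponding integer; the quantity to be bounded is then precisely the number of edges $e(G)$, and one may assume $A,B\neq\emptyset$. A preliminary observation: since $t_k\le t_3=\tfrac{15399}{938}<17$ for every $k\ge3$ and $|n|\ge1$, the hypothesis $A,B\subseteq[2|n|^{17},\infty)$ guarantees that every element of $A\cup B$ exceeds the threshold $2|n|^{t_k}$ appearing in \cref{lem:K23}.

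Next I would check that $G$ satisfies the hypothesis of \cref{lem:KST2} with $r=\ell$ and $t=s_k+1$, where (as in \cref{lem:K23}) $\ell=3$ if $k=3$ and $\ell=2$ if $k\ge4$. Suppose $X,Y$ lie in the two vertex classes (in either order) with $|X|=r$, $|Y|=t$ and $\max_{x\in X}x<\min_{y\in Y}y$, and suppose for contradiction that $G[X\cup Y]$ were complete bipartite. Ordering the labels of $X$ as $a_1<\dots<a_r$ and those of $Y$ as $b_1<\dots<b_t$, we would have $a_r<b_1$, each $b_j\ge 2|n|^{t_k}$, and $a_ib_j+n$ a perfect $k$-th power for all $i,j$ (commutativity makes it irrelevant which class is which); since $r=\ell$ and $t=s_k+1$ this is exactly the configuration forbidden by \cref{lem:K23}. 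The strict inequality demanded by \cref{lem:KST2} is harmless, since \cref{lem:K23} rules out the configuration even when $a_\ell=b_1$.

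Finally I would invoke \cref{lem:KST2} with $|A|\le|B|$ to obtain
$$
e(G)\le 2 s_k^{1/r}|B|\,|A|^{1-1/r}+2(r-1)|B|\le\big(2 s_k^{1/r}+2(r-1)\big)|B|\,|A|^{1-1/r},
$$
the last step using $|A|\ge1$ so that $2(r-1)|B|\le 2(r-1)|B|\,|A|^{1-1/r}$. For $k\ge4$ one has $r=2$ and $s_k\le s_4=4$, whence $2 s_k^{1/2}+2\le 6\le7$ and $e(G)\le 7|B|\,|A|^{1/2}$; for $k=3$ one has $r=3$, $s_3=6$, whence $2\cdot 6^{1/3}+4<8$ and $e(G)\le 8|B|\,|A|^{2/3}$. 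The step most likely to need care is matching the ordered complete bipartite configuration produced by \cref{lem:KST2} to the one excluded by \cref{lem:K23} (and confirming the uniform bookkeeping $t_k\le17$); beyond that and the elementary numerical checks $2 s_k^{1/2}+2\le7$ and $2\cdot 6^{1/3}+4<8$, there is no real obstacle.
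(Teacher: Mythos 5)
Your proposal is correct and follows essentially the same route as the paper: build the bipartite graph, check via Lemma~\ref{lem:K23} (using $t_k<17$ so that $2|n|^{17}\ge 2|n|^{t_k}$) that the ordered hypothesis of Lemma~\ref{lem:KST2} holds with $r=\ell$ and $t=s_k+1$, then absorb the linear term using $|A|\ge 1$. The only cosmetic difference is that you use the tighter bound $s_k\le 4$ for $k\ge 4$ (giving $2\cdot 2+2=6\le 7$), whereas the paper uses the uniform $s_k\le 6$ (giving $2\sqrt{6}+2<7$); both yield the stated constants.
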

\begin{proof}
Note that $t_k<17$ and $s_k\leq 6$ for all $k \geq 3$. We first consider the case $k \geq 4$. In this case, the corollary follows immediately from Lemma~\ref{lem:KST2} and~\cref{lem:K23} by building a bipartite graph with bipartition $(A,B)$ such that $a\in A$ and $b \in B$ are adjacent if and only if $ab+n$ is a perfect $k$-th power and the inequality $2\sqrt{6}|B||A|^{1/2}+2|B|\leq 7|B||A|^{1/2}$. The proof of the case $k=3$ is similar; Lemma~\ref{lem:KST2} implies that the number of edges is at most $2 \cdot 6^{1/3} |B||A|^{2/3}+4|B|<8|B||A|^{2/3}$.
\end{proof}

\begin{cor}\label{cor:contribution}
Let $k\geq 2$ and let $n$ be a nonzero integer. If $A$ is a finite set of positive integers in $[2|n|^{17}, \infty)$, then the number of pairs $\{a,b\}$ such that $a,b\in A$ with $a\neq b$ and $ab+n$ is is a perfect $k$-th power is at most $7|A|^{3/2}$ when $k \geq 4$, at most $8|A|^{5/3}$ when $k=3$, and at most $10|A|^2/21$ when $k=2$.   
\end{cor}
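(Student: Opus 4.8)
The plan is to deduce this from Proposition~\ref{prop:twoset} by the standard trick of splitting a single set $A$ into two halves of comparable size and counting pairs between them, plus pairs inside each half, recursively. Concretely, for $k\geq 3$ I would order $A = \{a_1 < a_2 < \cdots < a_N\}$ with $N = |A|$ and split $A = A_1 \cup A_2$ where $A_1 = \{a_1,\dots,a_{\lfloor N/2\rfloor}\}$ and $A_2$ is the rest. Every pair $\{a,b\}$ with $a\neq b$ and $ab+n$ a perfect $k$-th power either has both elements in $A_1$, both in $A_2$, or one in each; the cross pairs are counted by Proposition~\ref{prop:twoset} applied to $(A_1,A_2)$ (both lie in $[2|n|^{17},\infty)$), and the within-half counts are handled by induction on $|A|$. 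Writing $g(N)$ for the maximum number of such pairs, this gives a recursion of the shape $g(N) \leq 2\,g(\lceil N/2\rceil) + 7 N \cdot (N/2)^{1/2}$ for $k\geq 4$ and similarly with exponent $2/3$ and constant $8$ for $k=3$; unwinding the geometric series (the exponents $3/2$ and $5/3$ both exceed $1$, so the recursion is dominated by the top level) yields $g(N) \leq C_k N^{3/2}$ resp. $C_k N^{5/3}$, and one checks the constants come out to at most $7$ and $8$ after summing $\sum_{j\geq 0} 2^{j}(2^{-j}N)^{3/2}/\sqrt2 = \sum_j 2^{-j/2} N^{3/2}/\sqrt2 = N^{3/2}/(\sqrt2 - 1) \approx 2.41\,N^{3/2}$, comfortably under $7$ (and the analogous sum $\sum_j 2^{-2j/3}$ for the $k=3$ case).

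Alternatively — and this may be cleaner to write — one can avoid recursion entirely: build the graph $G$ on vertex set $A$ with edges the good pairs, and apply Lemma~\ref{lem:KST2} directly with $A$ playing the role of \emph{both} parts (taking the bipartite double cover, or just invoking the non-complete-bipartite hypothesis of Lemma~\ref{lem:KST2} in the form that for any $X, Y \subseteq A$ with $\max X < \min Y$ the induced bipartite graph is not complete). Indeed Lemma~\ref{lem:KST2} is stated precisely so that the two parts need not be disjoint sets of integers, only that we count edges between an initial segment and a later segment; applied with $n=m=|A|$, $r = \ell$, $t = s_k+1$ it bounds the number of edges of $G$ by $2(s_k)^{1/\ell}|A|^{2-1/\ell} + 2(\ell-1)|A|$, and one plugs in $\ell = 2$, $s_k \leq 6$ for $k\geq 4$ to get $2\sqrt{6}\,|A|^{3/2} + 2|A| \leq 7|A|^{3/2}$, and $\ell=3$, $s_3=6$ for $k=3$ to get $2\cdot 6^{1/3}|A|^{5/3} + 4|A| \leq 8|A|^{5/3}$. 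This is really just Proposition~\ref{prop:twoset} with $A=B$, and it is the approach I would actually take.

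For $k=2$ the situation is different since Lemma~\ref{lem:D_2} (not the graph-theoretic input) is the relevant tool: it says any subset of $A$ lying in $[|n|^3,\infty) \supseteq [2|n|^{17},\infty)$ with the $D_2(n)$ property has at most $21$ elements, i.e. $G$ contains no $K_{22}$. Then Tur\'an's theorem (Lemma~\ref{lem:Turan}) with $r = 22$ bounds the number of edges by $\tfrac12(1 - \tfrac{1}{21})|A|^2 = \tfrac{10}{21}|A|^2$, which is exactly the claimed bound.

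The main obstacle is essentially bookkeeping rather than conceptual: one must verify that the numerical slack in Proposition~\ref{prop:twoset} (and in Lemma~\ref{lem:KST2}) is enough to absorb the lower-order terms $2(\ell-1)|A|$ into the stated constants $7$, $8$ — this is where $t_k < 17$ and $s_k \leq 6$ for all $k\geq 3$ are used to ensure $[2|n|^{17},\infty)$ is inside every relevant "large elements" range, and where checking $2\sqrt6 < 7$ and $2\cdot 6^{1/3} + 4 \leq 8$ (for $|A|\geq 1$, say, after noting the $4|A|$ term is dominated once $|A|$ is not tiny, and trivial for tiny $|A|$) finishes the estimate. No genuinely hard step arises, since all the Diophantine content has already been packaged into Lemmas~\ref{lem:D_2}, \ref{lem:K23}, \ref{lem:large} and Proposition~\ref{prop:twoset}.
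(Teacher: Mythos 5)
Your preferred ``direct'' route is exactly the paper's proof: for $k\geq 3$, apply Proposition~\ref{prop:twoset} with $B=A$, and for $k=2$, use Lemma~\ref{lem:D_2} to rule out $K_{22}$ and invoke Tur\'an's theorem, with the same constants. (Good call discarding the recursive-halving variant — unwinding $g(N)\leq 2g(N/2)+\tfrac{7}{2\sqrt2}N^{3/2}$ yields a constant near $\tfrac{7}{2\sqrt2}\cdot\tfrac{\sqrt2}{\sqrt2-1}\approx 8.5$, not $7$, so that version would not quite give the stated bound without extra work.)
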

\begin{proof}
When $k\geq 3$, the corollary follows from Proposition~\ref{prop:twoset} by setting $B=A$.    

It remains to prove the case $k=2$. Let $G$ be the graph with vertex set $A$ such that two distinct vertices $a,b$ are adjacent if and only if $ab+n$ is a square. By Lemma~\ref{lem:D_2}, $G$ does not contain $K_{22}$ as a subgraph. It follows from Lemma~\ref{lem:Turan} that the number of edges of $G$ is at most $10|A|^2/21$.
\end{proof}

The next proposition bounds the number of large elements in a Diophantine tuple with property $D_{\leq d}(n)$, with $d$ finite. 

\begin{prop}\label{prop:large}
Let $n,d$ be integers with $n \neq 0$ and $d\geq 2$. If $A \subseteq [2|n|^{17}, \infty)$ is a Diophantine tuple with property $D_{\leq d}(n)$, then $|A|\ll (d/\log d)^2$, where the implied constant is absolute.    
\end{prop}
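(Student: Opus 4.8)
The plan is to reduce to the prime case via a Ramsey-type / direct counting argument, but instead of invoking the (enormous) Ramsey number bound~\eqref{eq:Ramsey}, I would count edges color by color. Build the complete graph on vertex set $A$ and color each edge $\{a,b\}$ by the smallest prime $p\le d$ such that $ab+n$ is a perfect $p$-th power (such a prime exists since any element of $V_d$ is a perfect $p$-th power for some prime $p\mid k$ with $k\le d$). If $p_1<\dots<p_\ell$ are the primes up to $d$, then $\ell=\pi(d)\asymp d/\log d$. The total number of edges of $K_{|A|}$ is $\binom{|A|}{2}$, so some color class $p=p_i$ contains at least $\binom{|A|}{2}/\ell$ edges. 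Each such edge $\{a,b\}$ satisfies $ab+n\in\{x^{p}\}$, i.e.\ it is an edge of the graph counted in Corollary~\ref{cor:contribution} for the exponent $k=p$. Since $A\subseteq[2|n|^{17},\infty)$, Corollary~\ref{cor:contribution} applies and bounds that color class by $7|A|^{3/2}$ if $p\ge 4$, by $8|A|^{5/3}$ if $p=3$, and by $10|A|^2/21$ if $p=2$.

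The case $p=2$ needs a separate touch because $10|A|^2/21$ is quadratic in $|A|$ and so does not by itself force $|A|$ small. So I would first peel off colors $2$ and $3$. Let $e_p$ denote the number of edges of color $p$. We have $\sum_p e_p=\binom{|A|}{2}$, with $e_2\le 10|A|^2/21$, $e_3\le 8|A|^{5/3}$, and $e_p\le 7|A|^{3/2}$ for each of the $\ell-2$ primes $p\ge 5$. Hence
\[
\binom{|A|}{2}\le \frac{10}{21}|A|^2+8|A|^{5/3}+7\ell\,|A|^{3/2}.
\]
Since $\binom{|A|}{2}=\tfrac12|A|^2-\tfrac12|A|$ and $\tfrac12-\tfrac{10}{21}=\tfrac{1}{42}>0$, this rearranges to
\[
\frac{1}{42}|A|^2\le 8|A|^{5/3}+7\ell\,|A|^{3/2}+\tfrac12|A|\ll \ell\,|A|^{5/3},
\]
where I have absorbed the lower-order terms (note $|A|^{3/2}\le|A|^{5/3}$ and $\ell\ge1$, and if $|A|$ is bounded by an absolute constant there is nothing to prove). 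Dividing by $|A|^{5/3}$ gives $|A|^{1/3}\ll \ell$, i.e.\ $|A|\ll \ell^3\asymp (d/\log d)^3$. This is the wrong exponent, so the naive peeling is not quite enough, and the main obstacle is exactly this: the quadratic $p=2$ contribution $\tfrac{10}{21}|A|^2$ has a coefficient only barely below $\tfrac12$, and the $p=3$ term is superlinear of order $|A|^{5/3}$, which on its own only yields a cubic bound.

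To recover the sharp exponent $(d/\log d)^2$ I would instead handle the small primes $2$ and $3$ using Lemma~\ref{lem:KST2} (the Bugeaud--Gyarmati refinement of K\H ov\'ari--S\'os--Tur\'an) together with Lemma~\ref{lem:K23}, which bounds \emph{ordered} configurations and gives edge counts of shape $O(|A|^{3/2})$ even for $k=3$ once one orders by size — exactly the mechanism behind Proposition~\ref{prop:twoset}. Concretely, for every prime $p\le d$ the color-$p$ subgraph, when its vertices are taken in increasing order, contains no ordered complete bipartite piece of the forbidden shape from Lemma~\ref{lem:K23} (with the relevant $\ell\in\{2,3\}$ and $s_p+1\le 7$ on the other side), provided all vertices exceed $2|n|^{17}\ge 2|n|^{t_p}$; so Lemma~\ref{lem:KST2} gives $e_p\ll |A|^{3/2}$ with an absolute implied constant, uniformly in $p$ — including $p=2$ (here one also invokes the $D_2(n)$ input behind Lemma~\ref{lem:D_2}, or rather its bipartite ordered analogue, to get the $r=O(1)$, $t=O(1)$ forbidden configuration). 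Then $\binom{|A|}{2}=\sum_p e_p\ll \ell\,|A|^{3/2}$, whence $|A|^{1/2}\ll\ell$ and $|A|\ll\ell^2\asymp(d/\log d)^2$, as desired. The one point to verify carefully is that Lemma~\ref{lem:K23}/Lemma~\ref{lem:KST2} indeed yield a clean $O(|A|^{3/2})$ bound for the prime $p=2$ under the hypothesis $A\subseteq[2|n|^{17},\infty)$ — i.e.\ that large-element configurations for $BD_2(n)$ are as constrained as claimed — which is the crux and the step I expect to require the most care.
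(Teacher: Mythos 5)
Your first argument \emph{is} the paper's proof, and it already gives the correct exponent; the jump to $|A|\ll\ell^3$ comes from an over-crude absorption rather than a genuine obstruction. You correctly reach
\[
\tfrac{1}{42}|A|^2\le 8|A|^{5/3}+7\ell\,|A|^{3/2}+\tfrac12|A|,
\]
with $\ell=\pi(d)$. The mistake is merging the three terms on the right into $O(\ell\,|A|^{5/3})$: this needlessly attaches a factor of $\ell$ to the $|A|^{5/3}$ term, whose coefficient is absolute. Keep the $|A|^{5/3}$ term and the $\ell\,|A|^{3/2}$ term separate (absorbing $\tfrac12|A|\le\tfrac12|A|^{3/2}$ into the latter, so the right side is at most $8|A|^{5/3}+8\ell\,|A|^{3/2}$). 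Then at least one of the two terms exceeds $\tfrac{1}{84}|A|^2$, so either $|A|^{1/3}\ll1$, i.e.\ $|A|$ is bounded by an absolute constant, or $|A|^{1/2}\ll\ell$, i.e.\ $|A|\ll\ell^2$. In both cases $|A|\ll\ell^2\asymp(d/\log d)^2$ with an absolute implied constant. This is exactly the one-line deduction the paper makes from Corollary~\ref{cor:contribution}: what saves the argument is that $\tfrac{10}{21}<\tfrac12$ (so the $p=2$ color class leaves a positive $\Theta(|A|^2)$ residual) and that the $p=3$ contribution $8|A|^{5/3}$ has an \emph{absolute} coefficient, hence does not inherit the factor $\ell$.

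Your proposed recovery route --- forcing $e_p\ll|A|^{3/2}$ uniformly, including at $p=2$ --- would not go through, and you are right to flag $p=2$ as the crux. There is no $O(|A|^{3/2})$ edge bound for the color-$2$ class available in the paper: as the footnote to Theorem~\ref{thm:absolute} notes, bipartite Diophantine tuples with property $BD_2(n)$ can be infinite (Pell equations), so no K\H{o}v\'ari--S\'os--Tur\'an-type $K_{s,t}$-free argument kicks in at $k=2$, even after restricting to large elements and ordering. The Tur\'an bound $e_2\le\tfrac{10}{21}|A|^2$ coming from Lemma~\ref{lem:D_2} is the correct input for $p=2$, and, as the corrected calculation above shows, it is also sufficient. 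The detour is unnecessary.
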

\begin{proof}
Since $A$ satisfies property $D_{\leq d}(n)$, for each $a,b\in A$ with $a\neq b$, there is a prime $p\leq d$ such that $ab+n$ is a perfect $p$-th power. It follows from Corollary~\ref{cor:contribution} that
$$
\binom{|A|}{2}\leq \frac{10|A|^2}{21}+8|A|^{5/3}+7|A|^{3/2} \cdot (\pi(d)-2).
$$
Thus $|A|\ll (\pi(d))^2$, as required.
\end{proof}

We also need to bound the number of large elements in a Diophantine tuple with property $D_{\leq \infty}(n)$. To achieve that, we apply tools from Diophantine approximation. We recall the following fundamental result in linear forms of logarithms of algebraic numbers; see, for example, \cite{BW93}. For a nonzero rational number $\alpha=a/b$, where $a,b$ are coprime integers, its height $H(\alpha)$ is defined as $\max \{|a|,|b|\}$.

\begin{lem}[\cite{BW93}]\label{lem:linearform}
Let $b_1$ and $b_2$ be non-zero integers and let $\alpha_1$ and $\alpha_2$ be positive rational numbers. Put $A_i=\max \{2, H(\alpha_i)\}$ for $i \in \{1,2\}$, $B=\max \{|b_1|, |b_2|,2\}$, and $\Lambda=b_1\log \alpha_1+b_2\log \alpha_2$. Then there exists an effectively computable positive constant $C$ such that if $\Lambda \neq 0$, then
$$
\log |\Lambda|>-C\log A_1 \cdot \log A_2 \cdot \log B.
$$ 
\end{lem}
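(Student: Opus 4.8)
The plan is to prove this by Baker's method --- the theory of lower bounds for linear forms in logarithms of algebraic numbers --- specialised to two logarithms of rationals. Assume $\Lambda\neq 0$ and suppose, toward a contradiction, that $\log|\Lambda|\leq -C\log A_1\cdot\log A_2\cdot\log B$ for a large constant $C$ to be fixed. Put $\gamma=\alpha_1^{b_1}\alpha_2^{b_2}=e^{\Lambda}$, a nonzero rational of height at most $(A_1A_2)^{O(B)}$ which under this assumption is extraordinarily close to $1$. The contradiction will come from playing ``$\gamma$ nearly $1$'' against a Liouville-type estimate forbidding a nonzero algebraic number of controlled height and degree from being that small.

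First I would construct an auxiliary entire function. Fix integer parameters $L_0,L_1,S,T$, each a suitable monomial in $\log A_1,\log A_2,\log B$, with $(L_0+1)(L_1+1)$ comfortably exceeding $ST$. By Siegel's lemma --- the pigeonhole bound for a nonzero integral solution of an underdetermined linear system with coefficients of controlled height --- one obtains integers $p(\lambda_0,\lambda_1)$, not all zero, of size bounded polynomially in $\log A_1,\log A_2,\log B$, such that
$$
\Phi(z)=\sum_{\lambda_0=0}^{L_0}\sum_{\lambda_1=0}^{L_1}p(\lambda_0,\lambda_1)\,z^{\lambda_0}\,\alpha_1^{\lambda_1 z}
$$
vanishes to order at least $T$ at each of $z=1,2,\dots,S$. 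Here the relation $\alpha_1^{b_1}\alpha_2^{b_2}=e^{\Lambda}$ (equivalently $\alpha_2\approx\alpha_1^{-b_1/b_2}$) is used to rewrite the exponentials at the relevant points in terms of bounded-height algebraic numbers, the point being that the smallness of $\Lambda$ makes each such rewriting cost only a factor $1+O(|\Lambda|)$; this is the sole place the hypothesis enters.

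The core of the argument is the extrapolation. The high-order vanishing of $\Phi$ on $\{1,\dots,S\}$, together with a bound for $|\Phi|$ on a large circle via the maximum modulus principle, forces $|\Phi^{(t)}(s)|$ to be extremely small for $s$ over a considerably larger range of integer points and $t$ up to a larger order; transporting this through the ``$\gamma\approx 1$'' comparison shows the associated algebraic numbers are so small that a Liouville bound makes them vanish outright. Iterating a bounded number of times pushes the number of vanishing conditions past the dimension count, whence the polynomial $P(X,Y)=\sum p(\lambda_0,\lambda_1)X^{\lambda_0}Y^{\lambda_1}$ would vanish identically --- contradicting the choice of the $p(\lambda_0,\lambda_1)$ (in the degenerate case where $\alpha_1,\alpha_2$ are multiplicatively dependent, the final nonvanishing is instead seen directly and $\Lambda$ is bounded below by an elementary Liouville estimate). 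Tracking the inequalities relating $L_0,L_1,S,T$ to the assumed size of $|\Lambda|$ then produces an admissible absolute $C$, which is the claim.

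I expect the main obstacle to be precisely this extrapolation bookkeeping together with the concluding zero estimate: one must keep the excess of unknowns over equations strictly positive through every iteration, control the growth of the vanishing orders and of the heights, and make the final nonvanishing rigorous --- the classical subtlety in Baker's proof, for which the sharp two-variable treatments via interpolation determinants (in the style of Laurent--Mignotte--Nesterenko, which bypass Siegel's lemma altogether) give cleaner constants and the precise dependence $\log A_1\log A_2\log B$. As this lemma is the Baker--W\"ustholz estimate in the two-logarithm case, in the paper one would simply invoke \cite{BW93} rather than reproduce the argument.
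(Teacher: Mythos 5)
The paper contains no proof of this lemma: it is stated verbatim as a citation of the Baker--W\"ustholz theorem \cite{BW93}, specialized to two logarithms of positive rationals, and is invoked as a black box in the proof of Proposition~\ref{prop:p1p2}. Your write-up correctly identifies it as such and sketches the standard transcendence-theoretic strategy behind it --- an auxiliary function produced by Siegel's lemma, extrapolation of vanishing via the maximum modulus principle, a Liouville-type lower bound, and a concluding zero estimate --- together with the interpolation-determinant alternative of Laurent--Mignotte--Nesterenko that yields the clean $\log A_1\log A_2\log B$ dependence. None of this machinery appears in the paper, and you yourself reach the same conclusion in your final sentence, so there is no discrepancy of approach, only of level of detail. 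For the record, the sketch is nowhere near a proof: the auxiliary function you display involves only $\alpha_1$, the substitution $\alpha_2\approx\alpha_1^{-b_1/b_2}$ that is supposed to bring in $\alpha_2$ is described only informally, the extrapolation bookkeeping is waved at, and the zero estimate --- the technical heart of Baker's method, and the place where a blind attempt would actually get stuck --- is entirely deferred. But since the paper cites rather than proves, the correct move here is exactly the one you make: invoke \cite{BW93} and stop.
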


\begin{prop}\label{prop:p1p2}
Let $n$ be a nonzero integer and let $M \geq (4|n|)^{17}$. There is an absolute constant $C'$ such that if $a_1, a_2, a_3,a_4$ are distinct integers in $[\sqrt{M}, M]$, then there do not exist two primes $p_1, p_2 \geq (C'\log M \log \log M)^{1/2}$ such that there exist positive integers $x_1,x_2,x_3,x_4$ with
\begin{equation}\label{eq:1234}
a_1a_2+n=x_1^{p_1}, \quad a_2a_3+n=x_2^{p_2},\quad  a_3a_4+n=x_3^{p_2}, \quad a_4a_1+n=x_4^{p_1}.
\end{equation}
\end{prop}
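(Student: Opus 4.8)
The plan is to derive a contradiction from the system \eqref{eq:1234} by eliminating $x_1,x_2,x_3,x_4$ and producing a small nonzero linear form in two logarithms, then invoking Lemma~\ref{lem:linearform}. The key algebraic observation is that the four products $a_1a_2$, $a_2a_3$, $a_3a_4$, $a_4a_1$ satisfy the multiplicative relation $(a_1a_2)(a_3a_4)=(a_2a_3)(a_4a_1)$. Writing $a_ia_{i+1}+n = y_i$ for the relevant perfect powers (with the cyclic convention), this gives $(y_1-n)(y_3-n)=(y_2-n)(y_4-n)$. Expanding and rearranging, we get $y_1y_3 - y_2y_4 = n(y_1+y_3-y_2-y_4)$, so
\[
\left| \frac{y_1 y_3}{y_2 y_4} - 1 \right| = \frac{|n|\,|y_1+y_3-y_2-y_4|}{y_2 y_4} \ll \frac{|n| M^2}{M^4} = \frac{|n|}{M^2},
\]
using that all $y_i \asymp M^2$ since $a_i \in [\sqrt M, M]$ and $|n| \le M$. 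Now $y_1 = x_1^{p_1}$, $y_4 = x_4^{p_1}$, $y_2 = x_2^{p_2}$, $y_3 = x_3^{p_2}$, so $\frac{y_1 y_3}{y_2 y_4} = \frac{x_1^{p_1} x_3^{p_2}}{x_4^{p_1} x_2^{p_2}} = \left(\frac{x_1}{x_4}\right)^{p_1} \left(\frac{x_3}{x_2}\right)^{p_2}$. Setting $\alpha_1 = x_1/x_4$ and $\alpha_2 = x_3/x_2$ (positive rationals) and $\Lambda = p_1 \log \alpha_1 + p_2 \log \alpha_2$, the bound above reads $|e^\Lambda - 1| \ll |n|/M^2$, which (once this is small, as it will be) forces $|\Lambda| \ll |n|/M^2 \le M^{-1}$, in particular $\Lambda \ne 0$ unless $\Lambda = 0$ exactly — I will handle the vanishing case separately below.

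Next I would bound the heights. Since $y_i = x_i^{p_i} \le M^2$ and $p_i \ge 2$, we have $x_i \le M$, so $H(\alpha_1), H(\alpha_2) \le M$, giving $\log A_i \ll \log M$; and $B = \max\{p_1, p_2, 2\}$, but we may also note $p_i \le \log_2(M^2) \ll \log M$ since $2^{p_i} \le x_i^{p_i} = y_i \le M^2$, so $\log B \ll \log\log M$. Assuming $\Lambda \ne 0$, Lemma~\ref{lem:linearform} yields
\[
\log|\Lambda| > -C \log A_1 \log A_2 \log B \gg -C_0 (\log M)^2 \log\log M
\]
for an effective constant $C_0$. On the other hand, from $|\Lambda| \ll |n|/M^2$ and $|n| \le M$ we get $\log|\Lambda| < -\log M + O(1) \le -\tfrac12 \log M$ for $M$ large. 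Comparing the two, a contradiction would follow if $(\log M)^2 \log\log M$ were genuinely smaller than $\log M$ — which it is not. This is precisely why the primes must be large: the linear-forms lower bound has a factor $\log A_1 \log A_2$ coming from the heights of $\alpha_1,\alpha_2$, and the strategy is to compare against $\min(p_1,p_2)$ times that, not against the raw bound. So I would instead argue: WLOG $p_1 \le p_2$; write $x_1^{p_1} = y_1 \le M^2$ so actually $\alpha_1 = x_1/x_4$ with $x_1^{p_1}, x_4^{p_1} \le M^2$, hence $x_1, x_4 \le M^{2/p_1}$ and $\log A_1 \ll (\log M)/p_1$; similarly $\log A_2 \ll (\log M)/p_2 \le (\log M)/p_1$. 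Then the linear-forms bound gives $\log|\Lambda| \gg -C (\log M)^2 \log\log M / p_1^2$, and since $p_1^2 \ge C' \log M \log\log M$, choosing $C'$ large enough relative to $C$ makes the right side $\gg -\tfrac14 \log M$, contradicting $\log|\Lambda| < -\tfrac12 \log M$. This pins down the constant $C'$.

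It remains to rule out $\Lambda = 0$, i.e. $p_1 \log(x_1/x_4) = -p_2 \log(x_3/x_2)$, equivalently $(x_1/x_4)^{p_1} = (x_2/x_3)^{p_2}$. If $x_1/x_4 = x_2/x_3 = 1$ then $y_1 = y_4$ and $y_2 = y_3$, so $a_1a_2 = a_4a_1$ giving $a_2 = a_4$, contradicting distinctness. Otherwise, by unique factorization the equality $(x_1/x_4)^{p_1} = (x_2/x_3)^{p_2}$ with $p_1, p_2$ distinct primes (or equal — handle separately) forces both sides to be a perfect $p_1 p_2$-th power of a rational $> 1$ or $<1$, so $x_1/x_4 = t^{p_2}$ and $x_2/x_3 = \pm t^{p_1}$ for some rational $t$; but $x_1/x_4 \le M^{2/p_1}$ and $t^{p_2} \ge 2^{p_2}$ (if $t\ne 1$), while $p_2 \ge (C'\log M\log\log M)^{1/2}$ makes $2^{p_2}$ far larger than $M^{2/p_1}$, a contradiction (if $p_1=p_2=p$, then $x_1/x_4 = \pm x_2/x_3$, so $y_1 y_3 = y_2 y_4$ exactly, i.e. $n(y_1+y_3-y_2-y_4)=0$, forcing $y_1+y_3 = y_2+y_4$ together with $y_1y_3=y_2y_4$, hence $\{y_1,y_3\}=\{y_2,y_4\}$, and tracing back through the $a_i$ again violates distinctness). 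The main obstacle, as indicated, is calibrating the height bounds so that the exponent $p_1$ appears quadratically in the denominator of the linear-forms estimate — without exploiting $x_i \le M^{2/p_i}$ the argument collapses because $(\log M)^2\log\log M$ dwarfs $\log M$; once that is in place the constant $C'$ is just chosen to beat the effective constant from Lemma~\ref{lem:linearform}.
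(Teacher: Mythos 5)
Your overall strategy --- using the multiplicative relation $(a_1a_2)(a_3a_4)=(a_2a_3)(a_4a_1)$ to produce a small nonzero linear form in two logarithms, bounding heights by $M^{O(1)/p_i}$, and invoking Lemma~\ref{lem:linearform} to pin down $C'$ --- is the same route the paper takes. But there is a genuine gap in your bound on $|e^\Lambda-1|$.

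You claim all $y_i\asymp M^2$, which is false: since $a_i\in[\sqrt M,M]$, a product $a_ia_j$ can be as small as $M$ (when both factors are near $\sqrt M$), so $y_i=a_ia_j+n$ can be $\asymp M$ rather than $\asymp M^2$. Your estimate
\[
\left|\frac{y_1y_3}{y_2y_4}-1\right|=\frac{|n|\,|y_1+y_3-y_2-y_4|}{y_2y_4}\ll\frac{|n|M^2}{M^4}
\]
is therefore unjustified: the numerator is indeed $\ll |n|M^2$, but the denominator $y_2y_4$ can be as small as $\sim M^2$, yielding only the useless bound $\ll |n|$. Replacing $y_1+y_3-y_2-y_4$ by its factored form $(a_1-a_3)(a_2-a_4)$ does not help either, since that can still be $\sim M^2$. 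The paper's fix is a crucial rearrangement you omit: after relabeling (cyclically or by reflection, swapping $p_1\leftrightarrow p_2$ if necessary) one may assume WLOG that $y_1=x_1^{p_1}$ is the largest of the four. Then $|y_1+y_3-y_2-y_4|\leq 2y_1$, and dividing by $y_1y_3$ cancels the $y_1$, leaving $2|n|/y_3$. Since \emph{every} $y_i\geq M-|n|>M/2$, this gives $|e^\Lambda-1|\leq 4|n|/M\leq M^{-16/17}$ --- small enough, and notice one only ever needs a lower bound $\gtrsim M$ on the $y_i$, never $\gtrsim M^2$. With this in place, the remainder of your argument (heights $\log A_i\ll(\log M)/\min(p_1,p_2)$, the $\log\log M$ from $\log B$, and the resulting constraint $t\ll\sqrt{\log M\log\log M}$) matches the paper. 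One further remark: your proof that $\Lambda\neq 0$ via unique factorization and a case split on $p_1=p_2$ is far more elaborate than needed. The paper simply notes that $y_1y_3-y_2y_4=n(y_1+y_3-y_2-y_4)=n(a_1-a_3)(a_2-a_4)\neq 0$ because $n\neq 0$ and the $a_i$ are distinct, so $\Lambda\neq 0$ immediately.
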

\begin{proof}
Let $C$ be the constant from Lemma~\ref{lem:linearform}. We show that the proposition is true with $C'=\max \{1,21C\}$.

For the sake of contradiction, suppose that there exist primes $p_1, p_2 \geq (C'\log M \log \log M)^{1/2}$ and positive integers $x_1,x_2,x_3,x_4$ satisfying equation~\eqref{eq:1234}. Let $t=\min \{p_1,p_2\}$. Without loss of generality, assume that $x_1^{p_1}=\max \{x_1^{p_1}, x_2^{p_2}, x_3^{p_2}, x_4^{p_1}\}$. Note that $x_3^{p_2}=a_3a_4+n\geq M-|n|>M/2$. Also, we have $$
\max \{x_1,x_2,x_3,x_4)\leq (M^2+|n|)^{1/t}\leq M^{3/t},$$
$$\max \{p_1,p_2\}\leq \log_2(M^2+|n|)\leq 3\log_2 M<4.5\log M.
$$

Observe that 
$$
(x_1^{p_1}-n)(x_3^{p_2}-n)=a_1a_2a_3a_4=(x_2^{p_2}-n)(x_4^{p_1}-n).
$$
It follows that
\begin{align*}
x_1^{p_1}x_3^{p_2}-x_2^{p_2}x_4^{p_1}
&=n(x_1^{p_1}+x_3^{p_2}-x_2^{p_2}-x_4^{p_1})\\
&=n(a_1a_2+a_3a_4-a_2a_3-a_4a_1)=n(a_1-a_3)(a_2-a_4)\neq 0.
\end{align*}
Thus,
$$
0<\bigg|\frac{x_2^{p_2}x_4^{p_1}}{x_1^{p_1}x_3^{p_2}}-1\bigg|\leq \frac{2|n|x_1^{p_1}}{x_1^{p_1}x_3^{p_2}}\leq \frac{2|n|}{x_3^{p_2}}\leq \frac{4|n|}{M}\leq \frac{M^{1/17}}{M}=M^{-\frac{16}{17}},
$$
where we used the assumption that $M\geq (4|n|)^{17}$. 

Set
$$
\Lambda=\log \frac{x_2^{p_2}x_4^{p_1}}{x_1^{p_1}x_3^{p_2}}=p_1 \log \bigg(\frac{x_4}{x_1}\bigg)+p_2\log \bigg(\frac{x_2}{x_3}\bigg);
$$
then $\Lambda \neq 0$ and $|e^{\Lambda}-1|\leq M^{-16/17}\leq 4^{-16}$. It is easy to verify that if $z$ is a real number such that $|e^z-1|<1/8$, then $|z|<1/2$; and if $|z|<1/2$, then $|e^z-1|\geq |z|/2$. It follows that $|\Lambda|\leq 2M^{-16/17}$ and thus
$$\log |\Lambda| \leq \log 2-\frac{16}{17}\log M<-\frac{15}{17}\log M.$$ On the other hand, we can apply Lemma~\ref{lem:linearform} with $$\alpha_1=x_4/x_1, \quad \alpha_2=x_2/x_3, \quad b_1=p_1, \quad b_2=p_2$$ to get a lower bound on $\log |\Lambda|$. Note that we have $$\log A=\max \{\log H(\alpha_1), \log H(\alpha_2)\}\leq (3\log M)/t,$$  $$\log B=\log \max\{p_1,p_2\}<\log(4.5\log M)<2\log \log M.$$ Then Lemma~\ref{lem:linearform} implies that
$$
-\frac{15}{17}\log M>\log |\Lambda|>-C \frac{9\log^2 M}{t^2} (2\log \log M).
$$
It follows that
$$
t<\sqrt{21C \log M \log \log M},
$$
which contradicts the assumption that $t\geq \sqrt{C'\log M\log \log M}$ since $C'\geq 21C$.
\end{proof}

Now we are ready to prove a key result of the section.

\begin{prop}\label{prop:largeoo}
Let $n$ be a nonzero integer and let $N\geq 4|n|^{17}$. If $A \subseteq [(4|n|)^{17},N]$ is a Diophantine tuple with property $D_{\leq \infty}(n)$, then $|A|\ll \log N$, where the implied constant is absolute. 
\end{prop}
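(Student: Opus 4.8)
The plan is to reduce the problem to a dyadic decomposition of the interval $[(4|n|)^{17}, N]$ and apply Proposition~\ref{prop:p1p2} together with the K\"ovari--S\'os--Tur\'an-type bound of Lemma~\ref{lem:KST3}. First I would partition $A$ into $O(\log N)$ dyadic blocks $A_j = A \cap [2^j, 2^{j+1})$ with $2^j \geq (4|n|)^{17}$; it suffices to show that each $|A_j|$ is bounded by an absolute constant, and in fact I would want to show something slightly stronger that survives when we recombine adjacent blocks. Fix such a block and write $M = 2^{j+1}$, so every element lies in $[\sqrt{M}, M]$ (after absorbing a constant number of adjacent blocks if needed) and $M \geq (4|n|)^{17}$. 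For each pair $\{a,b\}\subseteq A_j$, the property $D_{\leq\infty}(n)$ gives a prime $p = p(a,b)$ with $ab+n$ a perfect $p$-th power; color the edge $\{a,b\}$ of the complete graph on $A_j$ by this prime. The key point is that there are only boundedly many ``small'' colors: primes $p < (C'\log M\log\log M)^{1/2}$, of which there are $O\big((\log N \log\log N)^{1/2}\big)$ — too many to apply Lemma~\ref{lem:KST3} directly with all of them, so this needs care.

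Here is how I would handle the two regimes of colors. For the ``large'' colors $p \geq (C'\log M\log\log M)^{1/2}$: Proposition~\ref{prop:p1p2} says precisely that the $2$-colored subgraph on these colors contains no $4$-cycle $v_1v_2v_3v_4$ with $v_1v_2,v_4v_1$ one color and $v_2v_3,v_3v_4$ another (this is the configuration forbidden in Lemma~\ref{lem:KST3}, restricted to large-color edges); hence by Lemma~\ref{lem:KST3} the number of large-color edges inside $A_j$ is $\ll (\#\text{large colors})^{1/2}|A_j|^{3/2} + (\#\text{large colors})|A_j|$. For the ``small'' colors, I need a separate input: a small color $p$ is itself a fixed prime, and the edges of color $p$ form a subgraph in which $ab+n$ is a perfect $p$-th power — this is governed by $M_p(n)$ restricted to the block, or more directly by Corollary~\ref{cor:contribution}, which bounds the number of such pairs among elements of $[2|n|^{17},\infty)$ by $\ll |A_j|^{5/3}$ (worst case $p=3$). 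But summing that over all $\ll (\log N\log\log N)^{1/2}$ small primes is far too lossy. The fix is to observe that Corollary~\ref{cor:contribution} counts $k$-th powers for \emph{all} $k\geq 2$ simultaneously when we take $B=A_j$: an edge colored by any prime $p\geq 3$ contributes, so the \emph{total} number of small-color edges (indeed all edges colored by primes $\geq 3$) is at most $8|A_j|^{5/3}+7|A_j|^{3/2}$, and the edges colored $2$ number at most $10|A_j|^2/21$ by Lemma~\ref{lem:D_2}/Lemma~\ref{lem:Turan}.

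Combining: $\binom{|A_j|}{2} \leq \tfrac{10}{21}|A_j|^2 + 8|A_j|^{5/3} + 7|A_j|^{3/2} + O\big((\log N\log\log N)^{1/4}|A_j|^{3/2}\big) + O\big((\log N\log\log N)^{1/2}|A_j|\big)$, where the last two terms come from Lemma~\ref{lem:KST3} applied to the large-color subgraph. The $\tfrac{10}{21}|A_j|^2$ term is absorbed into $\tfrac12\binom{|A_j|}{2}$ for $|A_j|$ large, leaving $\tfrac14|A_j|^2 \lesssim |A_j|^{5/3} + (\log N\log\log N)^{1/4}|A_j|^{3/2} + (\log N\log\log N)^{1/2}|A_j|$, which forces $|A_j| \ll (\log N\log\log N)^{1/2}$. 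This is per-block, so $|A| \ll \log N \cdot (\log N\log\log N)^{1/2}$ — not good enough. To get the clean $|A|\ll\log N$, the real argument must be global rather than block-by-block: I would instead run Lemma~\ref{lem:KST2} (the ``increasing-sequence'' version of K\"ovari--S\'os--Tur\'an) on the whole set $A$ at once, since Proposition~\ref{prop:p1p2} is stated for elements in $[\sqrt M, M]$ with the relevant ordering, and Lemma~\ref{lem:KST2} is designed exactly to exploit forbidden complete bipartite patterns among increasing tuples without dyadic loss. Concretely, Proposition~\ref{prop:p1p2} forbids a certain $K_{2,2}$-like configuration along increasing chains once $M$, the largest element, satisfies $M\geq(4|n|)^{17}$ (true since $N\geq 4|n|^{17}$ and elements exceed $(4|n|)^{17}$ — modulo adjusting constants so $\sqrt M$ lies below the smallest element, which holds because the smallest element is $\geq(4|n|)^{17}=\sqrt{(4|n|)^{34}}$ and we may shrink $N$'s role), and feeding this into Lemma~\ref{lem:KST2} gives edge count $\ll (\log N)^{1/2}\cdot|A|\cdot|A|^{1/2}$ for the large-color part plus the small-color contribution bounded globally by Corollary~\ref{cor:contribution}. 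The resulting inequality $\binom{|A|}{2}\ll |A|^{5/3}+(\log N\log\log N)^{1/4}|A|^{3/2}+\dots$ now yields $|A|\ll\log N$ directly.

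I expect the main obstacle to be organizing the color bookkeeping cleanly: one must not lose a factor of $\pi\big((\log N\log\log N)^{1/2}\big)$ when summing over small primes, and the escape is that Corollary~\ref{cor:contribution} already accounts for all exponents $k\geq 3$ at once (and $k=2$ separately), so the small-color edges are controlled \emph{in aggregate} rather than color-by-color, while only the large colors need the quadratic-in-colors estimate from Lemma~\ref{lem:KST3} — and there the $\sqrt{\#\text{colors}}=(\log N\log\log N)^{1/4}$ loss is acceptable. A secondary technical point is ensuring the hypotheses of Proposition~\ref{prop:p1p2} (all four elements in $[\sqrt M,M]$, two large primes, the specific cyclic pattern of exponents) match the configuration forbidden in whichever extremal-graph lemma is invoked; this is a matter of choosing the dyadic/ordering setup so that ``large element'' automatically means ``$\geq\sqrt M$'' with $M$ the top of the relevant range, which is why working with $[(4|n|)^{17},N]$ and the increasing-tuple lemma is the natural framing.
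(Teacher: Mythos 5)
Your diagnosis that dyadic blocks are ``too many'' is correct, but the cure you settle on does not work, and the one the paper actually uses is a third option you never consider. Let me spell out the two real gaps.

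First, the decomposition. The paper does not abandon the block-by-block strategy; it changes the block size. Set $M_1=4|n|^{17}$ and $M_i=M_{i-1}^2$, and take $A_i=A\cap[M_i,M_{i+1}]$. Then $A_i\subseteq[\sqrt{M_{i+1}},M_{i+1}]$, so Proposition~\ref{prop:p1p2} applies inside each block with $M=M_{i+1}$, and there are only $\ell\ll\log\log N$ blocks (not $\ll\log N$). Each block gives $|A_i|\ll\log N/\log\log N$ via Corollary~\ref{cor:contribution} for the small primes plus Lemma~\ref{lem:KST3} for the large primes, and summing over $\ell$ blocks yields $|A|\ll\log N$. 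You correctly saw that $\log N$ dyadic blocks is fatal, but then jumped to ``the argument must be global'' rather than asking whether a coarser block works. Your proposed global route via Lemma~\ref{lem:KST2} fails for a concrete reason: Proposition~\ref{prop:p1p2} requires all four elements $a_1,\dots,a_4$ to lie in a single interval $[\sqrt M,M]$, and if $A\subseteq[(4|n|)^{17},N]$ with $N$ arbitrarily large compared to $|n|$, the set $A$ cannot be covered by one such interval. The remark that ``the smallest element is $\geq(4|n|)^{17}=\sqrt{(4|n|)^{34}}$'' only helps if $N\leq(4|n|)^{34}$, which is not assumed. Moreover Proposition~\ref{prop:p1p2} forbids an alternating two-color $4$-cycle, which is exactly the hypothesis of Lemma~\ref{lem:KST3}, not a forbidden $K_{r,t}$ of the shape Lemma~\ref{lem:KST2} handles; feeding the former into the latter is not a drop-in substitution.

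Second, your reading of Corollary~\ref{cor:contribution} is wrong, and this is what made your per-block bound look better than it should. That corollary is stated for a single fixed $k$: it bounds the number of pairs $\{a,b\}$ with $ab+n$ a perfect $k$-th power. An edge colored $3$ and an edge colored $5$ are counted by different instances of the corollary, so the total over small primes $p\leq t$ is $\frac{10}{21}|A_i|^2+8|A_i|^{5/3}+7|A_i|^{3/2}\cdot(\pi(t)-2)$, not a $\pi(t)$-free quantity. The factor $\pi(t)\ll(\log N/\log\log N)^{1/2}$ is precisely what forces $|A_i|\ll\pi(t)^2+\pi(4\log N)\ll\log N/\log\log N$, and this is why you need only $O(\log\log N)$ blocks to land at the clean $|A|\ll\log N$. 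So the ``aggregate over all $k\geq 3$ at once'' observation you lean on is not available, and the escape from the $\pi(t)$ loss is not aggregation but the $\log\log N$ block count.
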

\begin{proof}
Let $M_1=4|n|^{17}$, and inductively we define $M_{i}=M_{i-1}^2$ for each integer $i\geq 2$. Let $A_i=A \cap [M_i, M_{i+1}]$ for each positive integer $i$; then we have $A=\bigcup_{i=1}^{\ell} A_i$, where $\ell \ll \log \log N$. Thus, it suffices to show that $|A_i|\ll \frac{\log N}{\log \log N}$ for each $i$.

Fix an integer $i$ with $1\leq i \leq \ell$. Since $A_i$ is a Diophantine tuple with property $D_{\leq \infty}(n)$, for each pair $\{a,b\} \subseteq A_i$ with $a\neq b$, $ab+n$ is a perfect $p$-th power for some prime $p$; however, since $ab+n\leq N^2+n<4N^2$, it is necessary that $p\leq \log_2(4N^2)\leq 4\log N$. Let $C'$ be the constant from~\cref{prop:p1p2}. By Corollary~\ref{cor:contribution}, the number of such pairs corresponding to primes $p\leq t:=(C'\log N \log \log N)^{1/2}$ is at most
$$
\frac{10|A_i|^2}{21}+8|A_i|^{5/3}+7|A|^{3/2} \pi(t).
$$
Next, we give an upper bound on the number of pairs corresponding to primes $t<p\leq 4\log N$ using Lemma~\ref{lem:KST3}. Build a graph $G_i$ with vertex set $A_i$. If $a,b\in A_i$ with $a\neq b$ such that $ab+n$ is a $p$-th power for some prime $p \in (t,4\log N]$; then there is an edge between $a$ and $b$ and we color it using the smallest such $p$. Then the edges of $G_i$ are colored by at most $\pi(4\log N)$ colors. By Proposition~\ref{prop:p1p2}, $G_i$ does not contain a cycle through vertices $v_1,v_2,v_3,v_4$ such that the edges $v_1v_2$ and $v_4v_1$ have the same color, and the edges $v_2v_3$ and $v_3v_4$ have the same color. Thus, Lemma~\ref{lem:KST3} implies that the number of edges in $G_i$ is at most
$$
\sqrt{\pi(4\log N)} |A_i|^{3/2}+\pi(4\log N) |A_i|.
$$
It follows that
$$
\binom{|A_i|}{2}\leq \frac{10|A_i|^2}{21}+8|A_i|^{5/3}+7|A|^{3/2} \pi(t)+ \sqrt{\pi(4\log N)} |A_i|^{3/2}+\pi(4\log N) |A_i|.
$$
Hence $|A_i|\ll \pi(t)^2+\pi(4\log N)\ll \frac{\log N}{\log \log N}$, as required.
\end{proof}

\section{Finite field models}\label{sec:finitefield}
In this section, we use character sum estimates to study bounds on various finite field analogues of Diophantine tuples as a preparation to apply sieve methods in Section~\ref{sec:bipartite}. Throughout, let $\F_p$ be the finite field with $p$ elements, and $\F_p^*=\F_p \setminus \{0\}$. We remark that some finite field analogues of Diophantine tuples have been studied in, for example, \cite{G01, KYY, S14}.

\subsection{Diophantine tuples with property $D_{\leq d}(n)$}
The following Vinogradov-type double character sum estimate is well-known; see, for example \cite[Theorem 7]{G01} and \cite[Proposition 3.1]{KYY}.

\begin{lem}[Vinogradov]\label{lem:Vinogradov}
Let $\chi$ be a non-trivial multiplicative character of $\F_p$ and $\lambda \in \F_p^*$. For any $A,B \subseteq \F_p^*$,  we have
 $$
 \bigg|\sum_{a\in A,\, b\in B}\chi(ab+\lambda)\bigg|  \leq \sqrt{p|A||B|}.
 $$    
\end{lem}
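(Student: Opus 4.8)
The plan is to execute the classical Vinogradov argument: expand the square of the sum over one of the two sets, then interchange the order of summation so that the inner sum is a complete character sum over $\F_p$ that can be bounded by Weil-type or even elementary considerations. Concretely, write $S = \sum_{a \in A, b \in B} \chi(ab+\lambda)$, and rather than work with $S$ directly, I would bound $|S|$ by first applying Cauchy--Schwarz to pull out the sum over $a \in A$. After Cauchy--Schwarz we are left with $|S|^2 \le |A| \sum_{a \in \F_p^*} \big| \sum_{b \in B} \chi(ab+\lambda) \big|^2$, where crucially we have \emph{enlarged} the range of $a$ from $A$ to all of $\F_p^*$ (this is free since every term in the sum is nonnegative), which is exactly what makes the resulting inner sum complete and hence tractable.

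The key step is then to expand the square and swap the order of summation: $\sum_{a \in \F_p^*} \big| \sum_{b \in B} \chi(ab+\lambda) \big|^2 = \sum_{b_1, b_2 \in B} \sum_{a \in \F_p^*} \chi(ab_1+\lambda)\overline{\chi(ab_2+\lambda)}$. For fixed $b_1, b_2$, the inner sum over $a$ is (up to the missing term $a=0$, which contributes $\chi(\lambda)\overline{\chi(\lambda)} = 1$) a complete sum $\sum_{a \in \F_p} \chi\big( (ab_1+\lambda)\overline{(ab_2+\lambda)}\big)$, interpreting the argument suitably when $ab_2+\lambda = 0$. When $b_1 = b_2$ this sum is essentially $p$ (the summand is $1$ except where $ab_1+\lambda=0$); when $b_1 \ne b_2$, the rational function $(ab_1+\lambda)/(ab_2+\lambda)$ is nonconstant in $a$, so the complete sum is $O(\sqrt p)$ by Weil's bound for character sums — in fact for this Möbius-type argument one gets a clean bound, and after tracking the negligible lower-order terms one obtains $\sum_{a \in \F_p^*} \big| \sum_{b \in B} \chi(ab+\lambda)\big|^2 \le p|B| + (\text{small})$. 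Substituting back gives $|S|^2 \le |A|(p|B| + \ldots)$, and after absorbing the lower-order contributions one arrives at $|S| \le \sqrt{p|A||B|}$.

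The main obstacle is purely bookkeeping rather than conceptual: one must handle the degenerate diagonal terms $b_1 = b_2$ and the boundary cases where $ab_i + \lambda$ vanishes carefully enough that the error terms do not spoil the clean constant $1$ in front of $\sqrt{p|A||B|}$. The cited references (\cite[Theorem 7]{G01}, \cite[Proposition 3.1]{KYY}) carry this out; since the statement is quoted verbatim from the literature, I would simply invoke it, or else reproduce the Cauchy--Schwarz-plus-completion argument above and check that the constant works out. No new ideas are needed beyond the standard completion technique.
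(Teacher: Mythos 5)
The paper does not prove this lemma; it is quoted as a well-known estimate with citations to \cite[Theorem 7]{G01} and \cite[Proposition 3.1]{KYY}. Your sketch reproduces the standard completion argument behind those references, and it is correct: Cauchy--Schwarz in $a$, extend the $a$-sum to all of $\F_p$, open the square, and for $b_1 \ne b_2$ observe that $a \mapsto (ab_1+\lambda)(ab_2+\lambda)^{-1}$ is a nondegenerate M\"obius map (its determinant is $\lambda(b_1-b_2)\ne 0$, using $\lambda\ne 0$), so the inner complete sum equals $-\chi(b_1/b_2)$ by orthogonality and has absolute value exactly $1$, with no Weil input needed. One small caution in your write-up: if you only invoke ``nonconstant rational function, Weil gives $O(\sqrt p)$'' for the off-diagonal terms, the resulting bound $p|B| + O(|B|^2\sqrt p)$ does not yield the clean constant $1$ in front of $\sqrt{p|A||B|}$; you need the elementary $\le 1$ bound per off-diagonal pair, which you do flag (``in fact one gets a clean bound''), and after summing, the off-diagonal contribution is $-|\sum_b\chi(b)|^2 \le 0$, so the total is $\le p|B|$ on the nose. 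With that point made explicit, the argument gives $|S|^2 \le |A|\cdot p|B|$ as required.
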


Next, we use Lemma~\ref{lem:Vinogradov} to deduce an upper bound on the finite field analogue of a Diophantine tuple with property $D_{\leq d}(n)$.

\begin{prop}\label{prop:ap}
Let $p_1,p_2,\ldots, p_m$ be distinct primes and let $p$ be a prime such that $p \equiv 1 \pmod {p_i}$ for each $1\leq i \leq m$. Let $\lambda \in \F_p^*$. For each $1\leq i \leq m$, let $H_i=\{x^{p_i}: x \in \F_p\}$. If $A$ a subset of $\F_p^*$ such that $ab+\lambda \in \bigcup_{i=1}^{m} H_i$ for all $a,b \in A$ with $a\neq b$, then
$$|A|\leq (2^m\sqrt{p}+2) \cdot \prod_{i=1}^m \bigg(1-\frac{1}{p_i}\bigg)^{-1}. $$
\end{prop}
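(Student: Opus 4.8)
The plan is to bound $|A|$ by a standard inclusion–exclusion / counting argument that turns the ``$ab+\lambda$ lies in a union of power-residue sets'' condition into a character-sum estimate. For each $1\le i\le m$, since $p\equiv 1\pmod{p_i}$, the set $H_i=\{x^{p_i}:x\in\F_p\}$ is the subgroup of $p_i$-th powers together with $0$, and its indicator function can be written via multiplicative characters of order dividing $p_i$: for $y\in\F_p^*$,
$$
\mathbbm{1}_{H_i}(y)=\frac{1}{p_i}\sum_{\chi^{p_i}=\chi_0}\chi(y),
$$
where $\chi_0$ is the principal character. The density of $H_i$ in $\F_p^*$ is $1/p_i$, so the expected count of pairs $(a,b)$ with $ab+\lambda\in H_i$ is about $|A|^2/p_i$; the crucial point is that the ``off-diagonal'' character contributions are controlled by Lemma~\ref{lem:Vinogradov}.

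First I would set up the count $N$ of ordered pairs $(a,b)\in A\times A$ with $a\neq b$ such that $ab+\lambda\in\bigcup_{i=1}^m H_i$. By hypothesis $N=|A|(|A|-1)$. On the other hand, by the union bound (or inclusion–exclusion, keeping only the first term as an upper bound),
$$
N\le \sum_{i=1}^m \#\{(a,b)\in A\times A: ab+\lambda\in H_i\}.
$$
(One must be slightly careful about pairs with $ab+\lambda=0$; there are at most $|A|$ such ordered pairs since $b$ is determined by $a$, and these can be absorbed into the error term, or one restricts to $ab+\lambda\neq 0$ at the cost of $O(|A|)$.) For each $i$, expanding the indicator of $H_i$ in characters and separating the principal character,
$$
\#\{(a,b): ab+\lambda\in H_i\}
=\frac{|A|^2}{p_i}+\frac{1}{p_i}\sum_{\substack{\chi^{p_i}=\chi_0\\ \chi\neq\chi_0}}\sum_{a,b\in A}\chi(ab+\lambda)
+O(|A|),
$$
and Lemma~\ref{lem:Vinogradov} bounds each inner double sum by $\sqrt{p}\,|A|$. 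There are $p_i-1$ nonprincipal characters of order dividing $p_i$, so the error from the $i$-th term is at most $\frac{p_i-1}{p_i}\sqrt{p}\,|A|+O(|A|)\le \sqrt{p}\,|A|+O(|A|)$.

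Summing over $i$ gives
$$
|A|(|A|-1)\le |A|^2\sum_{i=1}^m\frac{1}{p_i}+m\sqrt{p}\,|A|+O(m|A|).
$$
Now I would like to divide by $|A|$ and solve for $|A|$, but the term $|A|^2\sum 1/p_i$ is on the wrong side; the honest way to get the stated bound (with the product $\prod(1-1/p_i)^{-1}$ rather than a bound involving $(1-\sum 1/p_i)^{-1}$) is instead to run inclusion–exclusion in the multiplicative-character expansion directly: write $\mathbbm{1}_{\bigcup H_i}$ using the characters and use that the "main term" coming from the product structure is $|A|^2\prod_{i=1}^m(1-(1-1/p_i))\cdots$ — more cleanly, one estimates the number of pairs $(a,b)$ with $ab+\lambda$ \emph{avoiding} all $H_i$: that count is
$$
\sum_{a,b\in A}\prod_{i=1}^m\Bigl(1-\mathbbm{1}_{H_i}(ab+\lambda)\Bigr)
=\sum_{a,b}\prod_{i=1}^m\frac{1}{p_i}\sum_{\chi_i^{p_i}=\chi_0,\ \chi_i\neq\chi_0}(-\chi_i(ab+\lambda))\ \cdot(\text{sign/normalization}),
$$
which upon expansion has main term $|A|^2\prod_{i=1}^m(1-1/p_i)$ and an error bounded, via Lemma~\ref{lem:Vinogradov} applied to the product character (still nontrivial since each factor is nontrivial), by $\bigl(\prod_{i=1}^m(p_i-1)/p_i\bigr)\cdot 2^m$-many terms each $\le\sqrt{p}\,|A|$, i.e.\ at most $2^m\sqrt{p}\,|A|$ after collecting. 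Since $A$ has the property that \emph{every} off-diagonal pair lands in $\bigcup H_i$, the avoiding-count is at most $|A|$ (only diagonal pairs survive), giving
$$
|A|\ge |A|^2\prod_{i=1}^m\Bigl(1-\frac{1}{p_i}\Bigr)-2^m\sqrt p\,|A|-O(|A|),
$$
and dividing by $|A|$ and rearranging yields $|A|\le (2^m\sqrt p+2)\prod_{i=1}^m(1-1/p_i)^{-1}$ once the $O(|A|)$ diagonal/zero terms are folded into the ``$+2$''.

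The main obstacle I expect is precisely the bookkeeping in the last paragraph: making the inclusion–exclusion expansion of $\prod_i(1-\mathbbm{1}_{H_i})$ clean enough that (a) the main term is exactly $|A|^2\prod(1-1/p_i)$, (b) every error term is a genuine Vinogradov double sum over a nontrivial character of $\F_p$ (this uses that a product $\chi_1\cdots\chi_r$ of distinct-order nontrivial characters is nontrivial because the $p_i$ are distinct primes), and (c) the total number of error terms is $2^m$ up to the density factors, producing the clean constant $2^m\sqrt p$. The diagonal pairs $a=b$ and the pairs with $ab+\lambda=0$ contribute only $O(|A|)$ and are what force the harmless additive ``$+2$'' in the final constant; keeping track of these is routine but must be done carefully to land exactly on the stated inequality.
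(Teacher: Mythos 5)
Your proposal is correct and follows essentially the same route as the paper: both expand the indicator of $\bigcup_i H_i$ via $1-\prod_i(1-\mathbf{1}_{H_i^*})$, write each $\mathbf{1}_{H_i^*}$ as an average of characters of order $p_i$, use that a product of nontrivial characters of distinct prime orders is nontrivial to apply Lemma~\ref{lem:Vinogradov} to every off-diagonal term (giving error $\le 2^m\sqrt p\,|A|$ after summing over the $2^m-1$ nonempty subsets), and absorb the $a=b$ and $ab+\lambda=0$ pairs into the additive $+2$. The paper organizes the inclusion--exclusion by nonempty subsets $I\subseteq[m]$ and bounds $|E(I)|/\prod_{i\in I}p_i\le\sqrt p\,|A|$ termwise, which is just a cleaner bookkeeping of the same estimate you describe.
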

\begin{proof}
For a set $S\subseteq \F_p$, we use $S^*$ to denote $S\setminus \{0\}$, and $\mathbf{1}_S$ to denote the indicator function of the set $S$. For each $1\leq i \leq m$, let $\chi_{p_i}$ be a multiplicative character of $\F_p$ of order $(p-1)/p_i$; then by the orthogonality relation, we have 
\begin{equation}\label{eq:orth}
\mathbf{1}_{H_i^*}=\frac{1}{p_i} \sum_{j_i=0}^{p_i-1} \chi_{p_i}^{j_i}.
\end{equation}

Let $H=\bigcup_{i=1}^m H_i$. Since $ab+\lambda \in H$ for all $a,b \in A$ with $a\neq b$, and for each $a\in A$, there is at most one $b \in A$ such that $ab+\lambda=0$, it follows that
\begin{equation}\label{eq:lb}
\sum_{a,b\in A} \mathbf{1}_{H^*}(ab+\lambda) \geq \sum_{a,b\in A} \mathbf{1}_{H}(ab+\lambda)-|A|\geq |A|^2-2|A|.    
\end{equation}

On the other hand, by equation~\eqref{eq:orth}, we have 
\begin{equation}\label{eq:sum1}
\mathbf{1}_{H^*}=1-\prod_{i=1}^m (1-\mathbf{1}_{H_i^*})=1-\prod_{i=1}^m \bigg(1-\frac{1}{p_i} \sum_{j=0}^{p_i-1} \chi_{p_i}^j\bigg)=-\sum_{\substack{I \subseteq [m]\\|I|\neq \emptyset}} \prod_{i \in I} \bigg(-\frac{1}{p_i} \sum_{j_i=0}^{p_i-1} \chi_{p_i}^{j_i}\bigg).
\end{equation}
Fix a nonempty subset $I$ of $[m]$. Let $|I|=k$ and write $\{p_i:i \in I\}=\{q_1,q_2,\ldots, q_{k}\}$. Then we have
\begin{equation}\label{eq:sum2}
\prod_{i \in I} \bigg(\sum_{j_i=0}^{p_i-1} \chi_{p_i}^{j_i}\bigg)=\sum_{j_1=0}^{q_1-1} \cdots \sum_{j_k=0}^{q_k-1} \prod_{i=1}^{k} \chi_{q_i}^{j_i};    
\end{equation}
note that the character $\prod_{i=1}^{k} \chi_{q_i}^{j_i}$ is trivial if and only if $j_1=j_2=\cdots=0$. It then follows from Lemma~\ref{lem:Vinogradov} that
\begin{equation}\label{eq:sum3}
\sum_{a,b\in A} \prod_{i \in I} \bigg(\sum_{j_i=0}^{p_i-1} \chi_{p_i}^{j_i}(ab+\lambda)\bigg)=\sum_{j_1=0}^{q_1-1} \cdots \sum_{j_k=0}^{q_k-1} \sum_{a,b \in A} \bigg(\prod_{i=1}^{k} \chi_{q_i}^{j_i}\bigg) (ab+\lambda)=|A|^2+E(I),
\end{equation}
where $|E(I)|\leq \prod_{i\in I}p_i \cdot \sqrt{p}|A|.$

Combining equations~\eqref{eq:sum1},~\eqref{eq:sum2}, and~\eqref{eq:sum3}, 
\begin{align}
\sum_{a,b\in A} \mathbf{1}_{H^*}(ab+\lambda)
&=- \sum_{\substack{I \subseteq [m]\\I\neq \emptyset}} \sum_{a,b \in A}\prod_{i \in I} \bigg(-\frac{1}{p_i} \sum_{j_i=0}^{p_i-1} \chi_{p_i}^{j_i}(ab+\lambda)\bigg)\notag \\
&=- \sum_{\substack{I \subseteq [m]\\I\neq \emptyset}} \bigg(\frac{|A|^2}{\prod_{i\in I}(-p_i)}+\frac{E(I)}{\prod_{i\in I}(-p_i)}\bigg)\notag \\
&\leq\bigg(1-\prod_{i=1}^m \bigg(1-\frac{1}{p_i}\bigg)\bigg)|A|^2+2^m \sqrt{p}|A|. \label{eq:ub}
\end{align}
Comparing inequalities~\eqref{eq:lb} and~\eqref{eq:ub}, we get
$$
(2^m\sqrt{p}+2)|A|\geq \prod_{i=1}^m \bigg(1-\frac{1}{p_i}\bigg) \cdot |A|^2,
$$
as required.
\end{proof}

\subsection{Bipartite Diophantine tuples}
Next, we consider finite field analogues of bipartite Diophantine tuples. We begin by applying Weil's bound on complete character sums. 

\begin{lem}\label{lem:Weil}
Let $d \geq 2$ and $p$ be a prime such that $p \equiv 1 \pmod d$. Let $A \subseteq \F_p^*$, $B \subseteq \F_p$, and $\lambda \in \F_p^*$, such that $ab+\lambda \in \{x^d: x \in \F_p\}$ for all $a\in A$ and $b\in B$. If $|A|=m$, then $|B|\leq \frac{p}{d^m}+m\sqrt{p}$.
\end{lem}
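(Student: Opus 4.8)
The plan is to detect $d$-th powers via a multiplicative character of order $d$, expand the resulting product over the elements of $A$, and control the non-principal terms with Weil's bound. Since $p\equiv 1\pmod d$, fix a multiplicative character $\chi$ of $\F_p$ of order $d$, with the standard convention $\chi^j(0)=0$ for $0\le j\le d-1$; by orthogonality the indicator function of the set $H^*$ of nonzero $d$-th powers in $\F_p$ is $\mathbf 1_{H^*}=\frac1d\sum_{j=0}^{d-1}\chi^j$. Consider
$$
S=\sum_{b\in\F_p}\ \prod_{a\in A}\mathbf 1_{H^*}(ab+\lambda).
$$
For a lower bound on $S$, restrict the sum to $b\in B$ (every term is $0$ or $1$, hence nonnegative): for $b\in B$ the hypothesis forces each $ab+\lambda$ to be a $d$-th power, so the inner product equals $1$ unless $ab+\lambda=0$ for some $a\in A$; since each $a\in A\subseteq\F_p^*$ rules out only $b=-\lambda a^{-1}$, at most $m$ values of $b$ are lost, whence $S\ge|B|-m$.

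For an upper bound, expand using $\mathbf 1_{H^*}=\frac1d\sum_{j}\chi^j$:
$$
S=\frac1{d^m}\sum_{(j_a)\in\{0,\dots,d-1\}^A}\ \sum_{b\in\F_p}\ \prod_{a\in A}\chi^{j_a}(ab+\lambda).
$$
The all-zero tuple contributes $\sum_{b\in\F_p}\prod_{a\in A}\chi_0(ab+\lambda)=p-m$ exactly. For a nonzero tuple $(j_a)$, set $A_1=\{a\in A:j_a\ne 0\}\ne\emptyset$ and $g(X)=\prod_{a\in A_1}(aX+\lambda)^{j_a}$; because the elements of $A_1$ are distinct and $\lambda\ne 0$, $g$ is a product of pairwise non-proportional linear polynomials raised to exponents $j_a\in\{1,\dots,d-1\}$, so $g$ is not a constant multiple of a $d$-th power in $\F_p[X]$. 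Weil's bound then gives $\bigl|\sum_{b\in\F_p}\chi(g(b))\bigr|\le(|A_1|-1)\sqrt p$, and the inner sum differs from $\sum_{b\in\F_p}\chi(g(b))$ only at the at most $m-|A_1|$ points $b$ annihilated by a factor $\chi_0(ab+\lambda)$ with $j_a=0$, so its absolute value is at most $(m-1)\sqrt p$. Summing over the $d^m-1$ nonzero tuples yields $S\le\frac{p-m}{d^m}+(m-1)\sqrt p$.

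Combining the two estimates gives $|B|\le\frac{p-m}{d^m}+(m-1)\sqrt p+m$, which is at most $\frac{p}{d^m}+m\sqrt p$ when $m\le\sqrt p$, while the case $m>\sqrt p$ is immediate from the trivial bound $|B|\le p$. I do not expect a serious obstacle here; the points needing care are the verification that $g$ is not a perfect $d$-th power, so that Weil's estimate applies with the \emph{number of distinct roots} of $g$ in place of $\deg g$, and the bookkeeping around the zeros $ab+\lambda=0$, which must be arranged so that the two losses of size $O(m)$ collapse into the stated bound rather than leaving an extra additive $m$.
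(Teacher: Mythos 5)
Your proof is correct and is essentially the same argument as the paper's: the paper sets up the system $\chi(b+a_i^{-1}\lambda)=\overline{\chi(a_i)}$ and invokes a standard counting result ([LN97, Exercise 5.66]), whose proof is exactly the orthogonality-expansion-plus-Weil computation you carry out explicitly. You have simply unfolded the cited reference rather than quoting it.
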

\begin{proof}
Let $\chi$ be a multiplicative character of $\F_p$ with order $d$. Let $A=\{a_1, a_2, \ldots, a_m\}$. Note that for each $b \in B$ and each $1\leq i \leq m$, we have $\chi(a_ib+\lambda)=1$ and thus $\chi(b+a_i^{-1}\lambda)=\overline{\chi(a_i)}$. In particular, each $b \in B$ is a solution to the system of equations $$\chi(x+a_i^{-1}\lambda)=\overline{\chi(a_i)}, \quad \forall 1\leq i \leq m.$$ 
Now, a classical application of Weil's bound (see, for example, \cite[Exercise 5.66]{LN97}) implies that 
$$
|B|\leq \frac{p}{d^m}+\bigg(m-1-\frac{m}{d}+\frac{1}{d^m}\bigg)\sqrt{p}+\frac{m}{d}<\frac{p}{d^m}+m\sqrt{p},
$$
as required.
\end{proof}

%\begin{cor}\label{cor:Weil}
%If $|A|\gg \log p$, then $|B|\ll \sqrt{p}\log p$.
%\end{cor}

Next, we consider an explicit version of a variant of a double character sum estimate due to Karatsuba \cite{K91} (see also \cite[Lemma 2.2]{S13}). For our applications, we establish an explicit dependence on the parameter $\nu$ for the upper bound below. Karatsuba's estimate is better than Vinogradov's estimate when $A$ and $B$ are asymmetric in the sense that the sizes of $A$ and $B$ are not comparable.

\begin{lem}\label{lem:charsum}
 Let $A \subseteq \F_p, B \subseteq \F_p^*$, and $\lambda \in \F_p^*$. Then for any non-trivial multiplicative character $\chi$ of $\F_p$ and any positive integer $\nu$, we have
$$
\bigg|\sum_{\substack{a\in A\\ b\in B}} \chi(ab+\lambda)\bigg|\leq |B|^{(2\nu-1)/2\nu} \big(2\nu|A|^{2\nu} \sqrt{p} + (2\nu)^\nu |A|^\nu p\big)^{1/2\nu}.
$$   
\end{lem}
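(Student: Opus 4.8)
The standard approach to Karatsuba-type estimates is to apply Hölder's inequality to pull the sum over $a$ outside, raise to the $2\nu$-th power to symmetrize, and then estimate the resulting complete character sum by Weil's bound. Concretely, I would write $S=\sum_{a\in A,\,b\in B}\chi(ab+\lambda)=\sum_{b\in B}\sum_{a\in A}\chi(ab+\lambda)$ and apply Hölder with exponents $\frac{2\nu}{2\nu-1}$ and $2\nu$ to the outer sum over $b$:
$$
|S|\le |B|^{(2\nu-1)/2\nu}\Biggl(\sum_{b\in B}\Bigl|\sum_{a\in A}\chi(ab+\lambda)\Bigr|^{2\nu}\Biggr)^{1/2\nu}.
$$
The inner $2\nu$-th power over $b$ is then bounded by extending the sum over all $b\in\F_p$ (legitimate since every term is nonnegative), giving
$$
\sum_{b\in\F_p}\Bigl|\sum_{a\in A}\chi(ab+\lambda)\Bigr|^{2\nu}
=\sum_{a_1,\dots,a_\nu,a_1',\dots,a_\nu'\in A}\ \sum_{b\in\F_p}\chi\Bigl(\tfrac{\prod_i(a_ib+\lambda)}{\prod_i(a_i'b+\lambda)}\Bigr),
$$
where the argument of $\chi$ is interpreted as a ratio in $\F_p$ (with the convention $\chi(0)=0$, and one checks the finitely many $b$ making a denominator vanish contribute a bounded error).

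The key dichotomy is then between "diagonal" tuples $(a_1,\dots,a_\nu)$ that are a permutation of $(a_1',\dots,a_\nu')$ and the rest. For non-diagonal tuples the rational function $R(b)=\prod_i(a_ib+\lambda)/\prod_i(a_i'b+\lambda)$ is not a $d$-th power of a rational function (here $d$ is the order of $\chi$), so Weil's bound for character sums of rational functions gives $|\sum_b\chi(R(b))|\le 2\nu\sqrt p$; there are at most $|A|^{2\nu}$ such tuples, contributing at most $2\nu|A|^{2\nu}\sqrt p$. For diagonal tuples the inner sum over $b$ is trivially at most $p$ in absolute value, and the number of diagonal tuples is at most $\nu!\,|A|^\nu\le (2\nu)^\nu|A|^\nu$, contributing at most $(2\nu)^\nu|A|^\nu p$. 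Adding the two contributions and taking the $2\nu$-th root gives exactly the claimed bound $|B|^{(2\nu-1)/2\nu}\bigl(2\nu|A|^{2\nu}\sqrt p+(2\nu)^\nu|A|^\nu p\bigr)^{1/2\nu}$.

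The main obstacle is making the Weil-bound step clean and explicit: one must verify that for a non-diagonal tuple the function $R(b)$ genuinely fails to be a perfect $d$-th power in $\F_p(b)$ — this uses that $\chi$ has order $d\ge 2$ and that distinct linear factors $a_ib+\lambda$ are coprime (so no cancellation can turn a non-permutation into a $d$-th power) — and then to cite a version of Weil's theorem with the explicit constant $2\nu$ coming from the number of distinct roots of numerator and denominator. A secondary technical point is the bookkeeping for the $O(\nu|A|^{2\nu})$ values of $b$ at which some denominator vanishes; these are absorbed harmlessly into the error term since each contributes $O(1)$, and for the clean statement one can simply note they are already dominated by the $2\nu|A|^{2\nu}\sqrt p$ term. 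Beyond that, everything is routine Hölder and counting.
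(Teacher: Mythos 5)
Your overall strategy --- H\"older to pull out a $2\nu$-th moment, expand, and split into Weil-bounded and trivially-bounded terms --- is the same as the paper's, but your classification of which tuples admit the Weil bound has a genuine gap. You call $(a_1,\dots,a_\nu,a_1',\dots,a_\nu')$ \emph{diagonal} when $(a_1,\dots,a_\nu)$ is a permutation of $(a_1',\dots,a_\nu')$, and assert that for every non-diagonal tuple the rational function $R(b)=\prod_i(a_ib+\lambda)/\prod_i(a_i'b+\lambda)$ fails to be a $d$-th power (where $d$ is the order of $\chi$). That is false: take $\nu=2$, $\chi$ of order $d=2$, $a_1=a_2=a$ and $a_1'=a_2'=a'$ with $a\neq a'$. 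This tuple is non-diagonal in your sense, yet $R(b)=\bigl(\frac{ab+\lambda}{a'b+\lambda}\bigr)^2$ is a perfect square, so $\chi(R(b))=1$ away from the poles and $\sum_b\chi(R(b))$ has size comparable to $p$, not $O(\nu\sqrt p)$. Coprimality of the distinct linear factors only rules out cancellation between numerator and denominator; it does not prevent a single factor from occurring with multiplicity divisible by $d$.

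The correct dichotomy, which the paper uses, is whether some $a_i$ appears \emph{exactly once} in the full $2\nu$-tuple $(a_1,\dots,a_{2\nu})$. If so, the corresponding linear factor appears in the relevant polynomial with multiplicity $1$ or $d-1$, neither of which is $\equiv 0\pmod d$ for any $d\ge2$, so Weil applies and gives $\le 2\nu\sqrt p$ uniformly; if not, every entry repeats and one uses the trivial bound $p$, on a set of at most $\binom{2\nu}{\nu}\nu!\,|A|^\nu=\frac{(2\nu)!}{\nu!}|A|^\nu\le(2\nu)^\nu|A|^\nu$ tuples --- strictly more than the $\nu!\,|A|^\nu$ you count. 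Your final numerical bound happens to be loose enough to absorb the correct count, but the argument as you state it does not reach it. A minor further difference: the paper first writes $\chi(ab+\lambda)=\chi(b)\chi(a+b^{-1}\lambda)$ (legitimate since $B\subseteq\F_p^*$) and substitutes $x=b^{-1}\lambda$, so the $2\nu$-th moment becomes $\sum_{x\in\F_p}\bigl|\sum_{a\in A}\chi(a+x)\bigr|^{2\nu}$ with monic factors $x+a_j$; this sidesteps the pole and degenerate-factor bookkeeping your direct extension over $b\in\F_p$ requires (e.g.\ $a_i=0$ collapses $a_ib+\lambda$ to a nonzero constant, changing the degree of $R$).
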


\begin{proof}
By H\"older's inequality, we have
\begin{align*}
\bigg|\sum_{\substack{a\in A\\ b\in B}} \chi(ab+\lambda) \bigg|
&=\bigg|\sum_{b \in B} \chi(b) \sum_{a \in A} \chi(a+b^{-1}\lambda)\bigg|\\  
&\leq \bigg(\sum_{b \in B} |\chi(b)|^{2\nu/(2\nu-1)}\bigg)^{(2\nu-1)/2\nu} \bigg(\sum_{b \in B} \bigg|\sum_{a \in A} \chi(a+b^{-1}\lambda)\bigg|^{2\nu}\bigg)^{1/2\nu}\\
&\leq |B|^{(2\nu-1)/2\nu} \bigg(\sum_{x \in \F_p} \bigg|\sum_{a \in A} \chi(a+x)\bigg|^{2\nu}\bigg)^{1/2\nu}.
\end{align*}

Note that
\begin{align}\label{eq: summand}
\sum_{x \in \F_p} \bigg|\sum_{a \in A} \chi(a+x)\bigg|^{2\nu}
=\sum_{a_1, a_2, \ldots, a_{2\nu} \in A} \bigg(\sum_{x \in \F_p} \chi\bigg(\prod_{j=1}^{\nu} (x+a_j)\bigg) \cdot 
\overline{\chi}\bigg(\prod_{k=\nu+1}^{2\nu} (x+a_k)\bigg)\bigg),
\end{align}
where the sum on the right-hand side runs over all $2\nu$-triples $(a_1, a_2, \ldots, a_{2\nu}) \in A^{2\nu}$. If there exists $i$ with $a_i \neq a_j$ for each $j \neq i$, then a standard application of Weil's bound implies that the second summand on the right-hand side of equation~\eqref{eq: summand} is $2\nu \sqrt{p}$ (see for example \cite[Corollary 11.24]{IK04}); otherwise the summand is trivially bounded by $p$. Note that in the latter case, each $a_i$ appears at least twice, so the number of such tuples $(a_1, a_2, \ldots, a_{2\nu}) \in A^{2\nu}$ is at most $\binom{2\nu}{\nu} \nu! |A|^\nu=\frac{(2\nu)!}{\nu!} |A|^\nu$. Therefore, 
$$
\sum_{x \in \F_p} \bigg|\sum_{a \in A} \chi(a+x)\bigg|^{2\nu} \leq 2\nu|A|^{2\nu} \sqrt{p} + \frac{(2\nu)!}{\nu!} |A|^\nu p<2\nu|A|^{2\nu} \sqrt{p} + (2\nu)^\nu |A|^\nu p,
$$
and the required estimate follows.
\end{proof}

\begin{cor}\label{cor:Kb}
Let $k\geq 2$ be an integer and $p$ be a prime such that $\gcd(k,p-1)>1$. Let $A,B \subseteq \F_p$ and $\lambda \in \F_p^*$, such that $ab+\lambda \in  \{x^k: x \in \F_p\}$ for all $a\in A$ and $b\in B$. If $\nu$ is a positive integer such that $|A|\geq 2\nu p^{1/2\nu}$, then $|B|\leq 12\nu\sqrt{p}$.
\end{cor}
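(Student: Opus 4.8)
The plan is to combine the character sum estimate from \cref{lem:charsum} with the trivial lower bound coming from the hypothesis. Let $d=\gcd(k,p-1)>1$ and observe that $\{x^k:x\in\F_p\}=\{x^d:x\in\F_p\}$, so we may assume $k=d\geq 2$. Let $\chi$ be a multiplicative character of $\F_p$ of order $d$; then for every $a\in A$ and $b\in B$ with $ab+\lambda\neq 0$ we have $\chi(ab+\lambda)=1$, while the number of pairs with $ab+\lambda=0$ is at most $|B|$ (each $a\in A$ kills at most one such $b$, and symmetrically; it is cleanest to say: for each $b\in B$ there is at most one $a\in A$ with $ab+\lambda=0$, contributing an error of at most $|B|$ to the count, or one may simply discard from $B$ the at most one element $b$ for which some $a\in A$ gives $ab+\lambda=0$). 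Writing $\psi=\mathbf{1}_{\{x^d\}}-\frac1d\sum_{j=0}^{d-1}\chi^j$, which is supported only at $0$, one gets
\[
\sum_{a\in A,\,b\in B}\chi(ab+\lambda)\geq |A||B| - (\text{number of pairs with }ab+\lambda=0)\geq |A||B|-\min\{|A|,|B|\}\cdot 1,
\]
so after trimming at most one bad element from $A$ we have the clean bound $\bigl|\sum_{a\in A,b\in B}\chi(ab+\lambda)\bigr|\geq (|A|-1)|B|\geq \tfrac12|A||B|$, say, provided $|A|\geq 2$ (which is forced by $|A|\geq 2\nu p^{1/2\nu}\geq 2$).

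Next I would feed this into \cref{lem:charsum}. That lemma gives
\[
\tfrac12|A||B|\leq |B|^{(2\nu-1)/2\nu}\bigl(2\nu|A|^{2\nu}\sqrt p+(2\nu)^\nu|A|^\nu p\bigr)^{1/2\nu}.
\]
Dividing by $|B|^{(2\nu-1)/2\nu}$ and raising to the $2\nu$-th power yields
\[
\tfrac{1}{2^{2\nu}}|A|^{2\nu}|B|\leq 2\nu|A|^{2\nu}\sqrt p+(2\nu)^\nu|A|^\nu p .
\]
Now divide through by $|A|^{2\nu}$: using the hypothesis $|A|\geq 2\nu p^{1/2\nu}$, i.e.\ $|A|^{2\nu}\geq (2\nu)^{2\nu}p$, we get $(2\nu)^\nu|A|^\nu p/|A|^{2\nu}=(2\nu)^\nu p/|A|^\nu\leq (2\nu)^\nu p/\bigl((2\nu)^\nu p^{1/2}\bigr)=\sqrt p$. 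Hence
\[
\tfrac{1}{2^{2\nu}}|B|\leq 2\nu\sqrt p+\sqrt p=(2\nu+1)\sqrt p,
\]
so $|B|\leq 2^{2\nu}(2\nu+1)\sqrt p$. This is exponential in $\nu$, which is weaker than the claimed $12\nu\sqrt p$, so the constant handling needs to be sharper.

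The real point, and the step I expect to be the main obstacle, is the bookkeeping of constants so that the bound comes out linear in $\nu$ rather than exponential. The fix is to not apply H\"older with exponent $2\nu$ blindly but to keep track that $|B|^{(2\nu-1)/2\nu}=|B|\cdot|B|^{-1/2\nu}$, so the inequality from \cref{lem:charsum} reads $|A||B|^{1/2\nu}\lesssim\bigl(2\nu|A|^{2\nu}\sqrt p+(2\nu)^\nu|A|^\nu p\bigr)^{1/2\nu}$ after absorbing the factor $\tfrac12$; raising to the $2\nu$ power gives $|A|^{2\nu}|B|\lesssim 2^{2\nu}\bigl(2\nu|A|^{2\nu}\sqrt p+(2\nu)^\nu|A|^\nu p\bigr)$ — the $2^{2\nu}$ is unavoidable this way. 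The correct route is instead to use the bound $(|A|-1)|B|\geq |A||B|/2$ only when $|A|\geq 2$ but to note that in fact $|A|-1\geq |A|\cdot\tfrac{2\nu p^{1/2\nu}-1}{2\nu p^{1/2\nu}}$ is essentially $|A|$, so one should carry $(|A|-1)$ rather than $|A|/2$, giving $(|A|-1)^{2\nu}|B|\leq 2\nu|A|^{2\nu}\sqrt p+(2\nu)^\nu|A|^\nu p$ directly from \cref{lem:charsum} with no spurious $2^{2\nu}$; then $(|A|-1)^{2\nu}\geq |A|^{2\nu}(1-1/|A|)^{2\nu}\geq |A|^{2\nu}(1-2\nu/|A|)\geq |A|^{2\nu}(1-p^{-1/2\nu})\geq \tfrac12|A|^{2\nu}$ for $p$ not too small (and small $p$ are handled trivially since $|B|\leq p$), and dividing by $|A|^{2\nu}$ together with $|A|\geq 2\nu p^{1/2\nu}$ as above yields $|B|\leq 2\bigl(2\nu\sqrt p+\sqrt p\bigr)=(4\nu+2)\sqrt p\leq 12\nu\sqrt p$ for $\nu\geq 1$. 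So the heart of the argument is simply orthogonality plus \cref{lem:charsum}, and the only delicate part is choosing how to split the $|B|$ power and bound $(|A|-1)^{2\nu}$ from below so the final constant stays linear.
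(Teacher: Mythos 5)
Your approach is essentially the paper's approach (orthogonality of characters plus Lemma~\ref{lem:charsum}, comparing the trivial lower bound $(|A|-1)|B'|$ on the character sum with the H\"older upper bound), but your constant bookkeeping has a genuine gap. You derive $(|A|-1)^{2\nu}\geq\tfrac12|A|^{2\nu}$ from the chain $(1-1/|A|)^{2\nu}\geq 1-2\nu/|A|\geq 1-p^{-1/2\nu}\geq\tfrac12$, and the last inequality needs $p\geq 2^{2\nu}$. Your proposed fallback for ``small $p$'' is the trivial bound $|B|\leq p$, but $p\leq 12\nu\sqrt{p}$ only holds when $p\leq 144\nu^2$. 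For $\nu\geq 7$ there is a nonempty range $144\nu^2<p<2^{2\nu}$ where neither argument applies, so the proof as written does not cover all cases.

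The fix, which is what the paper does, is to bound the ratio $(|A|/(|A|-1))^{2\nu}$ using only the weaker but universal fact that $|A|$ is an integer with $|A|\geq 2\nu p^{1/2\nu}>2\nu$, hence $|A|\geq 2\nu+1$. Then
\[
\left(\frac{|A|}{|A|-1}\right)^{2\nu}\leq\left(1+\frac{1}{2\nu}\right)^{2\nu}<e,
\]
a bound independent of $p$. Combined with $2\nu|A|^{2\nu}\sqrt{p}>(2\nu)^{\nu}|A|^{\nu}p$ (which your hypothesis gives you, even with room to spare), this yields $|B'|\leq 4\nu\sqrt{p}\cdot e<12\nu\sqrt{p}-1$, and adding back the one possibly discarded element $b=0$ gives $|B|\leq 12\nu\sqrt{p}$. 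Your $p^{-1/2\nu}$ estimate is where you lost the factor of $e$ versus $2$ and the uniformity in $p$; the integrality observation $|A|\geq 2\nu+1$ is the missing step.
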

\begin{proof}
Note that $\{x^k: x \in \F_p\}=\{x^{\gcd(k,p-1)}: x \in \F_p\}$. Let $\chi$ be a multiplicative character of $\F_p$ with order $\gcd(k,p-1)$ and let $B'=B \setminus \{0\} \subseteq \F_p^*$. Note that for each $b \in B'$, there is at most one $a \in A$ such that $ab+\lambda=0$. Thus, Lemma~\ref{lem:charsum} implies that 
\begin{equation}\label{eq:AB}    
|B'|(|A|-1)\leq \sum_{\substack{a\in A\\ b\in B'}} \chi(ab+\lambda)\leq |B'|^{(2\nu-1)/2\nu} (2\nu|A|^{2\nu} \sqrt{p} + (2\nu)^\nu |A|^\nu p)^{1/2\nu}.
\end{equation}
Since $|A|\geq 2\nu p^{1/2\nu}$, it follows that $2\nu|A|^{2\nu} \sqrt{p} > (2\nu)^\nu |A|^\nu p$. Therefore, inequality~\eqref{eq:AB} implies 
$$
|B'|(|A|-1)^{2\nu} \leq 4\nu |A|^{2\nu}\sqrt{p}.
$$
Since $|A|\geq 2\nu+1$, it follows that
$$
|B'|\leq 4\nu \sqrt{p} \cdot \bigg(\frac{|A|}{|A|-1}\bigg)^{2\nu}\leq 4\nu \sqrt{p} \cdot \bigg(1+\frac{1}{2\nu}\bigg)^{2\nu}<4e\nu \sqrt{p}<
12\nu \sqrt{p}-1.
$$
It follows that $|B|\leq |B'|+1\leq 12\nu\sqrt{p}$, as required.
\end{proof}

\section{Bounds on Bipartite Diophantine tuples}\label{sec:bipartite}

In this section, we use sieve methods to prove the results stated in Section~\ref{subsec:bipartite}. 

\subsection{Proof of Theorem~\ref{thm:thm1}}\ 

Let $N=2|n|^{t_k}$. Let $A,B \subseteq \N$ such that $AB+n \subseteq \{x^k: x \in \N\}$ with $|B|\geq |A|\geq m+s_k$. Then $|B|\geq |A|\geq m+s_k\geq (r_k-s_k)+s_k=r_k$. Let $A'=A \cap [1, N]$ and $B'=B \cap [1, N]$. It follows from Lemma~\ref{lem:large} that $|A'|\geq |A|-s_k\geq m$ and $|B'|\geq |B|-s_k\geq |B|-3$. We replace $A'$ with a subset of $A'$ of size $m$ if $|A'|>m$.

Consider the set of primes $\mathcal{P}=\{p: p\leq Q\}$, where $Q=N^{\phi(k)/\theta_{k,m}},$ and
\begin{equation}
\theta_{k,m}=\sum_{\substack{1\leq i \leq k\\\gcd(i,k)=1}}\gcd(i-1,k)^m    
\end{equation}
has been defined in equation~\eqref{eq:theta}. Note that $\phi(k)<k^m\leq \theta_{k,m}\leq k^{m+1}$. Since $m=o(\log \log |n|)$, as $|n|\to \infty$, we have $m^2k^{3m+1}=o(\log |n|)$, and thus 
\begin{equation}\label{eq:mk^{2m}}
mk^{2m}=o\bigg(\frac{\log N}{k^{m+1}}\bigg)=o(\log Q).    
\end{equation}

For each $p \in \mathcal{P}$, let $A_p$ be the image of $A'$ modulo $p$ and view $A_p$ as a subset of the finite field $\F_p$; similarly, define $B_p$. Let 
$$\mathcal{P}_1=\{p \in \mathcal{P}: |A_p|<m\}, \quad \mathcal{P}_2=\{p \in \mathcal{P}: 0 \in A_p\}, \quad
\mathcal{P}_3=\{p \in \mathcal{P}: p \mid n\}, \quad
$$
We claim that $|\mathcal{P}_1|\leq m^2\log_2N$. Indeed, if $p \in \mathcal{P}$ and $|A_p|<|A'|=m$, then there are two distinct elements $a_1,a_2 \in A'$ such that $p \mid (a_1-a_2)$. However, there are less than $m^2$ pairs of distinct elements $a_1, a_2 \in A'$, and for each such pair $a_1, a_2$, the number of distinct prime factors of $|a_1-a_2|$ is at most $\log_2 N$ since $0<|a_1-a_2|\leq N$. A similar argument shows that $|\mathcal{P}_2|\leq m \log_2 N$ and $|\mathcal{P}_3|\leq \log_2 N$.  

Let $\widetilde{\mathcal{P}}=\mathcal{P} \setminus \mathcal{P}_1\setminus \mathcal{P}_2 \setminus \mathcal{P}_3$. Next we apply Lemma~\ref{lem:GS2} to bound $|B|$ using the set of primes $\widetilde{\mathcal{P}}$. By \cite[Theorem 2.7]{MV07}, we have
$$
\sum_{p \in \widetilde{\mathcal{P}}} \frac{\log p}{p} \leq \sum_{p\leq Q} \frac{\log p}{p}=\log Q+O(1)=\frac{\phi(k)}{\theta_{k,m}} \log N+O(1),
$$
and thus
\begin{equation}\label{eq:eq0}
\exp\bigg(\sum_{p\in \widetilde{\mathcal{P}}}\frac{\log p}{p}\bigg) \ll Q.    
\end{equation}

Let $p \in \widetilde{\mathcal{P}}$; we claim that
\begin{equation}\label{eq:Bp}
|B_p|\leq \frac{p}{\gcd(k,p-1)^m}+m\sqrt{p}.
\end{equation}
Note that we have $p \nmid n$, $|A_p|=m$ and $0 \notin A_p$. If $\gcd(k,p-1)=1$, then we simply apply the trivial bound $|B_p|\leq p$; if $\gcd(k,p-1)>1$, then we have $$A_pB_p+n \subseteq \{x^k: x \in \F_p\}=\{x^{\gcd(k,p-1)}: x \in \F_p\}$$ with $p \nmid n$ and $A_p \subseteq \F_p^*$, and thus Lemma~\ref{lem:Weil} implies that inequality~\eqref{eq:Bp}.

It follows from inequality~\eqref{eq:Bp} that
\begin{equation}\label{eq:eq1}
\sum_{p \in \widetilde{\mathcal{P}}} \frac{\log p}{|B_p|}\geq \sum_{p \in \widetilde{\mathcal{P}}} \frac{\log p}{ \frac{p}{\gcd(k,p-1)^m}+m\sqrt{p}}= \sum_{p \in \widetilde{\mathcal{P}}} \frac{\log p}{ \frac{p}{\gcd(k,p-1)^m}}-\sum_{p \in \widetilde{\mathcal{P}}} \frac{\log p \cdot m\sqrt{p}}{ \frac{p}{\gcd(k,p-1)^m} (\frac{p}{\gcd(k,p-1)^m}+m\sqrt{p})}.    
\end{equation}
Note that from equation~\eqref{eq:mk^{2m}}, we have
\begin{align}
\sum_{p \in \widetilde{\mathcal{P}}} \frac{\log p \cdot m\sqrt{p}}{ \frac{p}{\gcd(k,p-1)^m} (\frac{p}{\gcd(k,p-1)^m}+m\sqrt{p})}
&\ll \sum_{p \in \widetilde{\mathcal{P}}} \frac{k^{2m}\log p \cdot m\sqrt{p}}{p^2} \notag\\
&\ll mk^{2m}\sum_{p} \frac{\log p}{p^{3/2}}\ll mk^{2m}=o(\log Q)  \label{eq:eq2}
\end{align}
as $|n|\to \infty$. On the other hand, by the prime number theorem for arithmetic progressions, we have
\begin{align}
\sum_{p\leq Q} \frac{\log p}{ \frac{p}{\gcd(k,p-1)^m}}
&=\sum_{\substack{1\leq i \leq k\\\gcd(i,k)=1}} \sum_{\substack{p\leq Q \\ p \equiv i \pmod k}}\frac{\log p}{ \frac{p}{\gcd(k,p-1)^m}} \notag\\
&=\sum_{\substack{1\leq i \leq k\\\gcd(i,k)=1}} \frac{(1+o(1))\gcd(k,p-1)^m}{\phi(k)} \log Q=\frac{(1+o(1))\theta_{k,m}}{\phi(k)} \log Q.\label{eq:eq3}
\end{align}
By Lemma~\ref{lem:badprime}, for each $j \in \{1,2,3\}$, we have
\begin{equation}\label{eq:eq4}
\sum_{p \in \mathcal{P}_j} \frac{\log p}{ \frac{p}{\gcd(k,p-1)^m}}\leq k^m \sum_{p \in \mathcal{P}_j} \frac{\log p}{p}\ll k^m \log |\mathcal{P}_j|\ll k^m \log \log N=o(\log Q).
\end{equation}
Thus, combining equation~\eqref{eq:eq3} and inequality~\eqref{eq:eq4}, we have 
\begin{equation}\label{eq:eq5}
\sum_{p \in \widetilde{\mathcal{P}}} \frac{\log p}{ \frac{p}{\gcd(k,p-1)^m}}\geq 
\sum_{p\leq Q} \frac{\log p}{ \frac{p}{\gcd(k,p-1)^m}}-
\sum_{j=1}^{3}\sum_{p \in \mathcal{P}_j} \frac{\log p}{ \frac{p}{\gcd(k,p-1)^m}}=\frac{(1+o(1))\theta_{k,m}}{\phi(k)} \log Q.
\end{equation}
Combining inequalities~\eqref{eq:eq1},~\eqref{eq:eq2}, and~\eqref{eq:eq5}, 
\begin{equation}\label{eq:eq6}
\sum_{p \in \widetilde{\mathcal{P}}} \frac{\log p}{|B_p|}\geq\frac{(1+o(1))\theta_{k,m}}{\phi(k)} \log Q. 
\end{equation}
Combining inequalities~\eqref{eq:eq0} and~\eqref{eq:eq6} with Lemma~\ref{lem:GS2}, 
$$
|B|\ll Q N^{o(1)}=N^{\frac{\phi(k)}{\theta_{k,m}}+o(1)},
$$
as required.

\subsection{Proof of Theorems~\ref{thm:loglogN2} and~\ref{thm:loglogN}}
The proofs of Theorems~\ref{thm:loglogN2} and~\ref{thm:loglogN} are inspired by several arguments used in \cite{G01, Y24} for bipartite Diophantine tuples, as well as the ``inverse sieve argument" developed by Elsholtz \cite{E01} in the inverse Goldbach problem.

\begin{proof}[Proof of Theorem~\ref{thm:loglogN2}]
Let $N=2|n|^{17}$. In view of \cref{lem:large}, we may assume that $A, B \subseteq [1,N]$. Let $\nu=\lceil \log \log N \rceil$. Set $$Q=(48\nu \phi(k) \log N)^2, \quad \mathcal{P}=\{p\leq Q: p \equiv 1 \pmod k, \, p \nmid n\}.$$ 
For each prime $p$, let $A_p$ be the image of $A$ modulo $p$, and define $B_p$ similarly. Let 
\[\mathcal{P}_1=\{p \in \mathcal{P}: |A_p|\geq 2\nu p^{1/2\nu}\}, \qquad \mathcal{P}_2=\mathcal{P}\setminus \mathcal{P}_1.\]

For each prime $p \in \mathcal{P}$, we can view $A_p$ and $B_p$ as subsets of $\F_p$. For each $a\in A$ and $b\in B$, $ab+n$ is a perfect $k$-th power. It follows that $A_pB_p+n \subseteq \{x^k: x \in \F_p\}$. Thus, by Corollary~\ref{cor:Kb}, if $p \in \mathcal{P}_1$, that is, $|A_p|\geq 2\nu p^{1/2\nu}$, then $|B_p|\leq 12\nu\sqrt{p}$.
Let
$$
S:= \sum_{\substack{p \in \mathcal{P}}} \log p, \quad S_1:= \sum_{\substack{p \in \mathcal{P}_1}} \log p, \quad S_2:= \sum_{\substack{p \in \mathcal{P}_2}} \log p.
$$
By the prime number for arithmetic progressions, we have 
\begin{equation}\label{eq:Sv2}
S\geq 
\sum_{\substack{p \equiv 1 \pmod k\\p \le Q}} \log p -\sum_{p \mid n} \log p \geq (1-o(1))\frac{Q}{\phi(k)}-\log N=(1-o(1))\frac{Q}{\phi(k)}.
\end{equation}
We consider the following two cases. 

Case 1: $S_1\geq S_2$. Then we have $S_1\geq S/2$ and thus
\begin{equation*}
\sum_{\substack{p \in \mathcal{P}_1}} \frac{\log p}{|B_p|} \geq \sum_{\substack{p \in \mathcal{P}_1}} \frac{\log p}{12\nu \sqrt{p}}\geq \frac{S_1}{12\nu \sqrt{Q}}\geq \frac{S}{24\nu\sqrt{Q}}.
\end{equation*}
It follows from inequality~\eqref{eq:Sv2} that
\begin{equation}\label{eq:case1v2}
\sum_{\substack{p \in \mathcal{P}_1}} \frac{\log p}{|B_p|}-\log N \geq \frac{S}{24\nu\sqrt{Q}}-\log N\geq (1-o(1))\frac{\sqrt{Q}}{24\nu \phi(k)}-\log N =(1-o(1))\log N.
\end{equation}
Applying Gallagher's sieve (Lemma~\ref{lem:GS}), and inequalities~\eqref{eq:Sv2} and~\eqref{eq:case1v2}, we conclude that
\[
|B|\leq \frac{\underset{p\in \mathcal{P}_1}\sum\log p - \log N}{\underset{p \in \mathcal{P}_1}\sum\frac{\log p}{|B_p|}-\log N}\leq (1+o(1))\frac{Q}{\phi(k)\log N}\ll_k \log N (\log \log N)^2 \ll \log |n| (\log \log |n|)^2.
\]

Case 2: $S_1< S_2$. Then we have $S_2\geq S/2$ and thus
\begin{equation}\label{eq:case2}
\sum_{\substack{p \in \mathcal{P}_2}} \frac{\log p}{|A_p|} \geq \sum_{\substack{p \in \mathcal{P}_2}} \frac{\log p}{2\nu p^{1/2\nu}}\geq \frac{S_2}{2\nu Q^{1/2\nu}}\geq \frac{S}{4\nu Q^{1/2\nu}}.
\end{equation}
Applying Gallagher's sieve (Lemma~\ref{lem:GS}), and inequalities~\eqref{eq:Sv2} and~\eqref{eq:case2}, we conclude that
\[
|A|\leq \frac{\underset{p\in \mathcal{P}_2}\sum\log p - \log N}{\underset{p \in \mathcal{P}_2}\sum\frac{\log p}{|A_p|}-\log N}\leq \frac{S}{\frac{S}{4\nu Q^{1/2\nu}}-\log N} \ll \nu Q^{1/2\nu} \ll_{k} \log \log N \ll \log \log |n|.
\]
Thus, we conclude that either $|A|\ll_{k} \log \log |n|$ or $|B|\ll_{k} \log |n| (\log \log |n|)^2$, as desired.
\end{proof}

Next, we use a similar idea to prove Theorem~\ref{thm:loglogN}.

\begin{proof}[Proof of Theorem~\ref{thm:loglogN}]
Let $\nu=\lceil \log \log N \rceil$. Let $c$ and $L$ be the positive constants from Corollary~\ref{cor:Linnik}. Without loss of generality, we may assume that $L \geq 10$ and $c\leq 1$. Let $$Q=k^L(24c^{-1}\nu\phi(k)\sqrt{k}\log N)^2, \quad \mathcal{P}=\{p\leq Q: p \equiv 1 \pmod k, \, p \nmid n\}.$$ 
We define $\mathcal{P}_1, \mathcal{P}_2, S, S_1, S_2$ in the same way as in the proof of Theorem~\ref{thm:loglogN2}.

Since $Q\geq k^L$, Corollary~\ref{cor:Linnik} implies that
\begin{equation}\label{eq:S}
S\geq 
\sum_{\substack{p \equiv 1 \pmod k\\p \le Q}} \log p -\sum_{p \mid n} \log p \geq \frac{cQ}{\phi(k)\sqrt{k}}-\log N.
\end{equation}

We consider the following two cases. 

Case 1: $S_1\geq S_2$. As in the proof of Theorem~\ref{thm:loglogN2}, we have $\sum_{\substack{p \in \mathcal{P}_1}} \frac{\log p}{|B_p|}\geq \frac{S}{24\nu\sqrt{Q}}$. It follows from inequality~\eqref{eq:S} that
\begin{equation}\label{eq:case1}
\sum_{\substack{p \in \mathcal{P}_1}} \frac{\log p}{|B_p|}-\log N \geq \frac{c\sqrt{Q}}{24\nu \phi(k)\sqrt{k}}-2\log N \geq (k^{L/2}-2)\log N \gg \log N.
\end{equation}
Applying Gallagher's sieve (Lemma~\ref{lem:GS}), inequalities~\eqref{eq:S} and~\eqref{eq:case1}, and the prime number theorem, we conclude that
\[
|B|\leq \frac{\underset{p\in \mathcal{P}_1}\sum\log p - \log N}{\underset{p \in \mathcal{P}_1}\sum\frac{\log p}{|B_p|}-\log N}\ll \frac{Q}{\log N}\ll k^{L+3} \nu^2 \log N \ll (\log N)^{L+5},
\]
where we used the assumption that $k\leq 5\log N$ in the last step, and the implied constants are all absolute. 

Case 2: $S_1< S_2$. As in the proof of Theorem~\ref{thm:loglogN2}, we have $\sum_{\substack{p \in \mathcal{P}_2}} \frac{\log p}{|A_p|}\geq \frac{S}{4\nu Q^{1/2\nu}}.$ Now Gallagher's sieve implies that
\[
|A|\leq \frac{\underset{p\in \mathcal{P}_2}\sum\log p - \log N}{\underset{p \in \mathcal{P}_2}\sum\frac{\log p}{|A_p|}-\log N}\leq \frac{S}{\frac{S}{4\nu Q^{1/2\nu}}-\log N} \ll \nu Q^{1/2\nu} \ll \log \log N,
\]
where we used the assumption that $k\leq 5\log N$ in the last step, and the implied constants are all absolute. 

Thus, we conclude that either $|A|\ll \log \log N$ or $|B|\ll (\log N)^{L+5}$, as desired.
\end{proof}

\section{Bounds on Diophantine tuples with property $D_{\leq d}(n)$}\label{sec:main}

In this section, we combine the results from all previous sections to prove Theorems~\ref{thm:Vd} and~\ref{thm:infty}.

\subsection{Proof of Theorem~\ref{thm:Vd}}\

Let $N=2|n|^{17}$ and let $A$ be a Diophantine tuple with property $D_{\leq d}(n)$. In view of Proposition~\ref{prop:large}, to prove the theorem, we can additionally assume that $N$ is sufficiently large and $A \subseteq \{1,2,\ldots, N\}$.  In particular, since $\sum_{p\mid n} \log p/\sqrt{p} \ll (\log |n|)^{1/2}$ \cite[Lemma 2.8]{KYY}, we can choose $N$ large enough so that
\begin{equation}\label{eq:bad}
\sum_{p\mid n} \frac{\log p}{\sqrt{p}} \leq \log N.    
\end{equation}

List the primes up to $d$ by $p_1,p_2, \ldots, p_m$ and set $r$ to be the product of these primes. By the prime number theorem, $\log r\ll d$. Let $c$ and $L$ be the positive constants from Corollary~\ref{cor:Linnik}. Without loss of generality, we may assume that $L \geq 10$ and $c\leq 1$. Let 
$$Q=r^L(c^{-1}D\phi(r)\sqrt{r}\log N)^2, \quad \mathcal{P}=\{p\leq Q: p \equiv 1 \pmod r, p \nmid n\}.
$$

For each prime $p \in \mathcal{P}$, let $A_p$ be the image of $A$ modulo $p$ and view $A_p$ as a subset of $\F_p$. Let $a, b \in A$ with $a \neq b$; then $ab+n \in V_d$, that is, there are $x\in \N$ and $1\leq j \leq m$, such that $ab+n=x^{p_j}$. Since $p \equiv 1 \pmod {p_i}$ for each $1\leq i \leq m$, it follows that $ab+n$, viewed as an element in $\F_p$, lies in the set $\bigcup_{i=1}^m\{y^{p_i}:y \in \F_p\}$. Thus, Proposition~\ref{prop:ap} implies that $|A_p|\leq D\sqrt{p}$, where $$D=(2^m+2)\prod_{i=1}^m (1-1/p_i)^{-1}\ll 4^m.$$ Thus, $\log D\ll m \ll d$.
By the prime number theorem, 
\begin{equation}\label{eq:numerator}
\sum_{p \in \mathcal{P}}\log p\leq \sum_{p \le Q} \log p \ll Q \ll r^{L+3}D^2 (\log N)^2.
\end{equation}
Since $Q\geq r^L$, Corollary~\ref{cor:Linnik} implies that
\begin{equation}\label{eq:L}
\sum_{\substack{p \equiv 1 \pmod r\\p \le Q}} \frac{\log p}{\sqrt{p}} \geq \frac{1}{\sqrt{Q}} \sum_{\substack{p \equiv 1 \pmod r\\p \le Q}} \log p\geq \frac{c\sqrt{Q}}{\phi(r)\sqrt{r}}\geq r^{L/2}D \log N.
\end{equation}
Thus, inequalities~\eqref{eq:bad} and~\eqref{eq:L} imply that
\begin{equation}\label{eq:denominator}
\sum_{p \in \mathcal{P}} \frac{\log p}{|A_p|} \geq \frac{1}{D} \sum_{\substack{p \equiv 1 \pmod r\\p \le Q}} \frac{\log p}{\sqrt{p}}-\sum_{p\mid n} \frac{\log p}{\sqrt{p}}\geq Lr^{L/2}\log N-\log N \gg r^{L/2}\log N.
\end{equation}
Applying Gallagher's sieve (Lemma~\ref{lem:GS}), inequalities~\eqref{eq:numerator} and~\eqref{eq:denominator}, and the fact $\log r \ll d$ and $\log D \ll d$, we conclude that
\[
|A|\leq \frac{\underset{p\in \mathcal{P}}\sum\log p - \log N}{\underset{p \in \mathcal{P}}\sum\frac{\log p}{|A_p|}-\log N}\ll r^{3+L/2} D^2\log N \ll e^{dL'} \log N
\]
for some absolute constant $L'$, where the implied constant is absolute, as required.

%\begin{rem}
%The Brun-Titchmarsh inequality  gives a better upper bound on inequality~\eqref{eq:numerator}
%\end{rem}

\subsection{Proof of Theorem~\ref{thm:infty}}\

By Proposition~\ref{prop:largeoo}, we may assume that $A \subseteq [1, M]$, where $M=(4|n|)^{17}$. We may assume that $M$ is sufficiently large. Let $G$ be the complete graph with vertex set $A$. For each $a b\in A$ with $a\neq b$, by definition, $ab+n$ is a perfect power; we can thus write $ab+n=x^p$ for some positive integer $x$ and a prime $p$, and we color the edge $ab$ by the smallest such $p$. Note that each prime $p$ we used to color some edge satisfies that $$p\leq \log_2(ab+n)\leq \log_2 (M^2+n)\leq 5\log M.$$ Let $L_1, L_2$ be the two absolute constants from \cref{thm:loglogN}. Then it follows from \cref{thm:loglogN} that $G$ does not contain a monochromatic $K_{\lceil L_1 \log \log M \rceil, \lfloor (\log M)^{L_2}\rfloor+1}$ as a subgraph. Thus, for each prime $p\leq 5\log M$, Lemma~\ref{lem:KST} implies that the number of edges in $G$ with color $p$ is at most 
$$
(\log M)^{L_2/(L_1 \log \log M) }
|A|^{2-1/\lceil L_1 \log \log M \rceil} + (L_1 \log \log M) |A|.
$$
Since the total number of edges in $G$ is $\binom{|A|}{2}$, it follows that
$$
\binom{|A|}{2} \leq 5\log M \bigg((\log M)^{L_2/(L_1 \log \log M) }
|A|^{2-1/\lceil L_1 \log \log M \rceil} + (L_1 \log \log M) |A|\bigg),
$$
and thus 
$$
|A|\leq (10e^{L_2/L_1} \log M)^{\lceil L_1 \log \log M \rceil}+2L_1 \log \log M\ll \exp(2L_1(\log \log |n|)^2),
$$
where the implied constant is absolute, as required.

\section{Proofs of conditional results}\label{sec:cond}

\subsection{Conditional estimates on $M_k(n)$: Proof of Theorem~\ref{thm:absolute}}\

Assume that $u<v$ are two elements in $A$. For each $b \in B$, there exist positive integers $x$ and $y$ such that $ub+n=x^k$ and $vb+n=y^k$; it follows that $uy^k-vx^k=(u-v)n$. In particular, for two distinct $b,b'\in b$, they induce a nontrivial integer solution $(x,y,z,w)$ to the Diophantine equation $$vx^k-uy^k-vz^k+uw^k=0$$ in the sense that $(x,y)\neq (z,w)$.

Suppose $|B|\geq 8$, then we can pick $8$ elements of $B$ to generate 
\begin{align*}
&vx_1^k-uy_1^k-vz_1^k+uw_1^k=0\\
&vx_2^k-uy_2^k-vz_2^k+uw_2^k=0\\
&vx_3^k-uy_3^k-vz_3^k+uw_3^k=0\\
&vx_4^k-uy_4^k-vz_4^k+uw_4^k=0
\end{align*}
More precisely, take $8$ distinct elements $b_1,b_2,b_3,b_4,c_1,c_2,c_3,c_4$ from $B$, and for each $1\leq i \leq 4$, define $x_i,y_i,z_i,w_i$ via the following rule:
\begin{equation}\label{eq:xyzw}    
ub_i+n=x_i^k, \quad vb_i+n=y_i^k, \quad uc_i+n=z_i^k, \quad vc_i+n=w_i^k.
\end{equation}
It follows that the matrix
\[
M:=M(b_1,b_2,b_3,b_4,c_1,c_2,c_3,c_4)=
\begin{pmatrix}
x_1^k & y_1^k & z_1^k & w_1^k \\
x_2^k & y_2^k & z_2^k & w_2^k \\
x_3^k & y_3^k & z_3^k & w_3^k \\
x_4^k & y_4^k & z_4^k & w_4^k 
\end{pmatrix}
\]
has determinant $0$ since $M (v, -u, -v, u)^T=\textbf{0}$. Thus, we have by the Leibniz formula for the determinant of $M$ that
\begin{equation}\label{eq:24sum}
0=\sum_{\sigma} (-1)^{\operatorname{sgn}(\sigma)} (x_{\sigma(1)}y_{\sigma(2)}z_{\sigma(3)}w_{\sigma(4)})^k,
\end{equation}
where $\sigma$ runs over $S_4$, the set of all $24$ permutations of $1,2,3,4$. Since $k\geq 25$, if we can show that the 24 numbers
$x_{\sigma(1)}y_{\sigma(2)}z_{\sigma(3)}w_{\sigma(4)}$
are pairwise distinct for $\sigma \in S_4$, then equation~\eqref{eq:24sum} will violate Conjecture~\ref{conj:LPS2}. 

Next assume that $|B|\geq 21737$. It suffices to show that we can always choose $8$ distinct elements $b_1,b_2,b_3,b_4,c_1,c_2,c_3,c_4$ from $B$, such that the 24 numbers
$x_{\sigma(1)}y_{\sigma(2)}z_{\sigma(3)}w_{\sigma(4)}$
are pairwise distinct for $\sigma \in S_4$. Here   $x_i,y_i,z_i,w_i$ are defined via equation~\eqref{eq:xyzw}. Equivalently, we study the matrix 
\[
N:=N(b_1,b_2,b_3,b_4,c_1,c_2,c_3,c_4)=
\begin{pmatrix}
x_1 & y_1 & z_1 & w_1 \\
x_2 & y_2 & z_2 & w_2 \\
x_3 & y_3 & z_3 & w_3 \\
x_4 & y_4 & z_4 & w_4 
\end{pmatrix}.
\]
For convenience, we introduce the following terminology. For each $1\leq i \leq 4$, we call a positive integer an \emph{$i \times i$ factor of $N$} if it appears as a term (up to the sign) in the Leibniz formula for the determinant of some $i \times i$ submatrix of $N$. Thus, our goal is to guarantee that all the $4 \times 4$ factors of $N$ are distinct, and we describe an algorithm to achieve this purpose. 

The key of the algorithm will be based on the following simple observation: the function $g(t)=\frac{ut+n}{vt+n}$ is monotone in $t$ whenever $vt+n>0$. Since
$$
\frac{ub_i+n}{vb_i+n}=\frac{x_i^k}{y_i^k},
$$
it follows that if $\frac{x_i}{y_i}$ is fixed, then $b_i$ is also uniquely fixed. Similarly, if $\frac{z_i}{w_i}$ is fixed, then $c_i$ is also uniquely fixed. In particular, as long as $b_1,b_2,b_3,b_4$ are distinct, we automatically have $x_iy_j\neq x_jy_i$ for each $1\leq i<j\leq 4$. Similarly, as long as $c_1,c_2,c_3,c_4$ are distinct, we automatically have $z_iw_j\neq z_jw_i$ for each $1\leq i<j\leq 4$.

We choose $b_1,b_2,b_3,b_4,c_1,c_2,c_3,c_4$ from $B$ step by step such that they are in strictly increasing order, such that at each step, for each $i \in \{1,2,3,4\}$, all the $i \times i$ factors in the matrix $N$ (with entries already determined) are distinct. We use the following algorithm to choose the elements $b_1, b_2, b_3, b_4$:
\begin{enumerate}
    \item choose $b_1$ to be the smallest element from $B$.
     \item choose the smallest element $b_2\in B$ such that $b_2>b_1$ and $x_2\neq y_1$.
    \item choose the smallest element $b_3\in B$ such that $b_3>b_2$ and $x_3\notin \{y_1,y_2\}$.
    \item choose the smallest element $b_4\in B$ such that $b_4>b_3$ and $x_4\notin \{y_1,y_2, y_3\}$.
\end{enumerate}

Since $|B|\geq 21737$, obviously the above algorithm went through. Note that for $i \in \{1,2\}$, all the $i\times i$ factors in the matrix $N$ with entries already determined are distinct. 

Next we pick $c_1, c_2,c_3,c_4$ step by step. For each $1\leq j \leq 4$, we pick $c_j$ using the following algorithm:
\begin{enumerate}
    \item Let $S_j$ be the collection of all possible factors in the matrix $N$ (with entries already determined).
    \item Let $S_j'=S_j \cup \{1\}$ and let $Q_j=S_j'/S_j'$ be the quotient set of $S'$.
    \item Pick $c_j\in B$ such that $z_j,w_j,w_j/z_j \notin Q_j$.
\end{enumerate}
We first note that $|S_j|$ is bounded from above by the total number of factors, so $$|S_j|\leq 16+\frac{16 \cdot 9}{2!}+\frac{16 \cdot 9 \cdot 4}{3!}+24=208.$$
It follows from that $|Q_j \cap (1, \infty)|\leq \binom{209}{2}=21736$. Note that for all $c_j\in B$, we have $z_j, w_j, \frac{w_j}{z_j}>1$. It follows that 
the above algorithm works since $|B|\geq 21737$.

Next, we claim that the choice of $c_j$ from the above algorithm guarantees that for each $i \in \{1,2,3,4\}$, all the $i \times i$ factors in the matrix $N$ (with entries already determined) are distinct. Let $i$ be fixed, consider two $i \times i$ factors $m_1$ and $m_2$ coming from different entries. Let $E$ be the set of labels of the entries building $m_1$ and $m_2$. We consider the following four cases:
\begin{enumerate}
\item Both $z_j$ and $w_j$ are in $E$. Without the loss of generality, we may assume that $m_1$ contains $z_j$ as an entry, and $m_2$ contains $w_j$ as an entry. Then if $m_1=m_2$, we have $m_1/z_j\in S_j'$ and $m_2/w_j=m_1/w_j \in S_j'$, and thus $w_j/z_j \in Q_j$, violating the assumption that $w_j/z_j \notin Q_j$.
\item $z_j$ is in $E$, and $w_j$ is not. Without the loss of generality, we may assume that $m_1$ contains $z_j$ as an entry. If $m_2$ also contains $z_j$ as an entry, then before we pick $c_j$, we can already guarantee $m_1/z_j\neq m_2/z_j$; if $m_2$ does not contain $z_j$ as an entry, then $m_1=m_2$ would imply that $m_1/z_j, m_1\in S_j'$ and thus $z_j \in Q_j$, violating the assumption. 

\item $w_j$ is in $E$, and $z_j$ is not. Similarly to the previous case, we have $m_1\neq m_2$.

\item None of $w_j$ and $z_j$ are in $E$. Then, before we pick $c_j$, we can already guarantee $m_1\neq m_2$.
\end{enumerate}
We conclude that $m_1\neq m_2$; this proves the claim.

Since $|B|\geq 21737$, we can run the above algorithm to construct the desired $c_1,c_2,c_3,c_4$ step by step, as required.

\subsection{Conditional estimates on $M_{\leq d}(n)$ and $\widetilde{f}(x)$: Proof of Theorem~\ref{thm:absolutedf} and Theorem~\ref{thm:LPS}}

\begin{proof}[Proof of Theorem~\ref{thm:absolutedf}]
Let $2\leq d<\infty$ and $n$ be a nonzero integer. Let $A$ be a Diophantine tuple with property $D_{\leq d}(n)$. Then $ab+n \in V_d$ for all $a,b \in A$ with $a\neq b$. Let $G$ be the complete graph with the vertex set $A$; for each edge $ab$ in $G$, we color it with the smallest prime $p$ such that $ab+n$ is a $p$-th power. By definition, all the primes we used to color the edges of $G$ are at most $d$. Let $C$ be the constant from \cref{cor:absolute}; then $G$ does not contain a monochromatic $K_{C+1}$ as a subgraph. Let $R$ be the Ramsey number $R(C+1,C+1,\cdots, C+1)$, where the number of $(C+1)$'s is $9$. 
Since there are $9$ primes less than $25$, it follows that $G$ contains no complete subgraph  $K_{R+1}$ such that all edges have color in $\{2,3,5,7,11,13,17,19,23\}$. Thus, Lemma~\ref{lem:Turan} implies that the number of edges in $G$ with color in $\{2,3,5,7,11,13,17,19,23\}$ is at most $\frac{1}{2}(1-\frac{1}{R-1})|A|^2$. For each prime $p$ such that $25<p\leq d$, \cref{thm:absolute} implies that $G$ does not contain a monochromatic $K_{2, 21737}$ in color $p$ as a subgraph and thus the number of edges in $G$ with color $p$ is $\ll |A|^{3/2}$ by Lemma~\ref{lem:KST}. We conclude that
$$
\binom{|A|}{2}-\frac{1}{2}(1-\frac{1}{R-1})|A|^2 \ll \pi(d) |A|^{3/2}
$$
and thus $|A|\ll \pi(d)^2\ll (d/\log d)^2$. This finishes the proof that $M_{\leq d}(n)\ll (d/\log d)^2$, where the implied constant is absolute.

Next, we give an upper bound on $\widetilde{f}(x)$. Let $A \subset [1,x] \cap \N$ such that there is an integer $n$ with $1\leq |n|\leq x$ such that $ab+n$ is a perfect power for all $a,b\in A$ with $a\neq b$; since $ab+n\leq x^2+x$ and $\log_2(x^2+x)\leq 5\log x$, it is necessary for $ab+n$ to be a $d$-th power for some $d\leq 5\log x$. This shows that $A$ is a Diophantine tuple with property $D_{\leq \lfloor 5 \log x \rfloor} (n)$ and thus $$|A|\leq M_{\leq \lfloor 5 \log x \rfloor} (n) \ll (\log x/\log \log x)^2,$$
as required.
\end{proof}

\begin{proof}[Proof of Theorem~\ref{thm:LPS}]
Assume that $x$ is sufficiently large. Pick a set $A \subseteq [1,x] \cap \N$ with $|A|=\widetilde{f}(x)$, such that there is some $1\leq |n| \leq x$ such that $ab+n$ is a perfect power for all $a,b \in A$ with $a\neq b$. Let $G$ be the graph with the vertex set $A$, such that there is an edge between two distinct vertices $a$ and $b$ if and only if $ab+n \in V_{\infty}$. By definition, $G$ is a complete graph. For each $a,b\in A$ such that $a\neq b$, we color the edge $ab$ with $1$ if $ab+n\in V_{25}$, otherwise we color the edge $ab$ with the smallest prime $p$ such that $ab+n$ is a $p$-th power. Note that each prime $p$ we used to color some edge satisfies that $25<p\leq \log_2 (x^2+n)\leq 5\log x$. By Theorem~\ref{thm:Vd}, there is an absolute constant $C$, such that $G$ does not contain a monochromatic $K_{\lfloor C\log x\rfloor+1}$ with color $1$ as a subgraph. Lemma~\ref{lem:Turan} then implies that the number of edges in $G$ with color $1$ is at most $\frac{1}{2}(1-\frac{1}{\lfloor C\log x\rfloor})|A|^2$. This implies that the number of edges in $G$ with color being a prime is at least $$\binom{|A|}{2}-\frac{1}{2}(1-\frac{1}{\lfloor C\log x\rfloor})|A|^2\geq \frac{|A|^2}{\lfloor C\log x\rfloor}-|A| \gg \frac{|A|^2}{\log x}.$$
For each prime $p$ with $25<p<5\log x$, by Theorem~\ref{thm:absolute}, $G$ does not contain a monochromatic $K_{2,21737}$ with color $p$ as a subgraph; thus, Lemma~\ref{lem:KST} implies that the number of edges in $G$ with color $p$ is $\ll |A|^{3/2}$. It follows that $\frac{|A|^2}{\log x} \ll \log x \cdot |A|^{3/2}$ and thus $\widetilde{f}(x)=|A|\ll (\log x)^4$.
\end{proof}

\subsection{Conditional estimates on $M_{\leq \infty}(n)$: Proof of Theorem~\ref{thm:ABC}}

\begin{proof}[Proof of Theorem~\ref{thm:ABC}]
Let $A$ be a Diophantine tuple with property $M_{\leq \infty}(n)$.
Under the ABC conjecture, by \cite[Lemma 1]{BDHL11}, there is an absolute constant $c_0$, such that there do not exist $5$ distinct positive integers $a_1,a_2,a_3,a_4,a_5$, each at least $c_0|n|^3$, with the property that for each $1\leq i<j \leq 3205$, $a_ia_j+n={x_{ij}}^{k_{ij}}$ for some integers $x_{ij}$ and $k_{ij}$ with $k_{ij}\geq 3205$. Let $$N=\max (c_0|n|^3, 2|n|^{17}).$$ Define $A_1=A \cap [1,N]$ and $A_2=A \cap (N, \infty]$. By definition, $|A_1|\ll \widetilde{f}(N)\ll \widetilde{f}(2|n|^{17})$. It suffices to show $|A_2|$ is absolutely bounded, so that we have $$|A|=|A_1|+|A_2|\ll \widetilde{f}(2|n|^{17}) \ll \exp(L (\log \log |n|)^2)$$ for some absolute constant $L$ from Corollary~\ref{cor:f}.

Next, we show that $|A_2|$ is absolutely bounded. Label the $452$ primes less than $3205$ by $2=p_1<p_2<\ldots<p_{452}=3203$. We build a complete graph $G$ with vertex set $A_2$, and we color the edge connecting two distinct elements $a,a'\in A_2$ with the color $i$ by the following rule:
\begin{itemize}
    \item If $aa'+n$ is a perfect $k$-th power for some $k\geq 3205$, then $i=0$;
    \item Otherwise, set $i$ to be the smallest number such that $aa'+n$ is a perfect $p_i$-th power.
\end{itemize}
In view of the above discussion, $G$ contains no monochromatic complete subgraph $K_5$ in color $0$. By Lemma~\ref{lem:D_2}, $G$ contains no monochromatic complete subgraph $K_{22}$ in color $1$. Let $C$ be the Ramsey number $R(5,22)$; then by
definition, $G$ contains no complete subgraph $K_{C}$ with all edges in color $0$ or $1$. Lemma~\ref{lem:Turan} then implies that the number of edges in $G$ with color $0$ or $1$ is at most $\frac{1}{2} (1-\frac{1}{C-1})|A_2|^2$. By Corollary~\ref{cor:contribution}, the number of edges in $G$ with color $2$ is at most $8|A_2|^{5/3}$, and for each $3\leq i \leq 452$, the number of edges in $G$ with color $i$ is at most $7|A_2|^{3/2}$. It follows that 
$$
\frac{|A_2|(|A_2|-1)}{2} \leq \frac{1}{2} \bigg(1-\frac{1}{C-1}\bigg)|A_2|^2+8|A_2|^{5/3}+450 \cdot 7|A_2|^{3/2},
$$
and thus
$$
|A_2|\leq (C-1)+ 16(C-1)|A_2|^{2/3}+ 6300 |A_2|^{1/2}.
$$
We conclude that $|A_2|$ is absolutely bounded, as required.
\end{proof}

\section*{Acknowledgments}
The second author thanks Andrej Dujella, Greg Martin, J\'ozsef Solymosi, and Zixiang Xu for helpful discussions at the early stages of the project. The authors thank the anonymous referees for their valuable comments and suggestions. The research of the second author was supported in part by an NSERC fellowship.

\bibliographystyle{abbrv}
\bibliography{main}

\begin{thebibliography}{10}

\bibitem{BW93}
A.~Baker and G.~W\"ustholz.
\newblock Logarithmic forms and group varieties.
\newblock {\em J. Reine Angew. Math.}, 442:19--62, 1993.

\bibitem{BHP25}
G.~Batta, L.~Hajdu, and A.~Pongr\'acz.
\newblock On {D}iophantine graphs.
\newblock {\em J. Lond. Math. Soc. (2)}, 111(5):Paper No. e70163, 2025.

\bibitem{BDHL11}
A.~B\'{e}rczes, A.~Dujella, L.~Hajdu, and F.~Luca.
\newblock On the size of sets whose elements have perfect power {$n$}-shifted products.
\newblock {\em Publ. Math. Debrecen}, 79(3-4):325--339, 2011.

\bibitem{BDHT16}
A.~B\'erczes, A.~Dujella, L.~Hajdu, and S.~Tengely.
\newblock Finiteness results for {$F$}-{D}iophantine sets.
\newblock {\em Monatsh. Math.}, 180(3):469--484, 2016.

\bibitem{BCM22}
N.~C. Bonciocat, M.~Cipu, and M.~Mignotte.
\newblock There is no {D}iophantine {$D(-1)$}-quadruple.
\newblock {\em J. Lond. Math. Soc. (2)}, 105(1):63--99, 2022.

\bibitem{BD18}
J.~{Bourgain} and C.~{Demeter}.
\newblock {On the number of $k$th powers inside arithmetic progressions}.
\newblock {\em arXiv e-prints}, 2018.
\newblock arXiv:1811.11919.

\bibitem{BB94}
J.~Browkin and J.~Brzezi\'nski.
\newblock Some remarks on the {$abc$}-conjecture.
\newblock {\em Math. Comp.}, 62(206):931--939, 1994.

\bibitem{BD03}
Y.~Bugeaud and A.~Dujella.
\newblock On a problem of {D}iophantus for higher powers.
\newblock {\em Math. Proc. Cambridge Philos. Soc.}, 135(1):1--10, 2003.

\bibitem{BG04}
Y.~Bugeaud and K.~Gyarmati.
\newblock On generalizations of a problem of {D}iophantus.
\newblock {\em Illinois J. Math.}, 48(4):1105--1115, 2004.

\bibitem{CHM}
L.~Caporaso, J.~Harris, and B.~Mazur.
\newblock Uniformity of rational points.
\newblock {\em J. Amer. Math. Soc.}, 10(1):1--35, 1997.

\bibitem{CE04}
E.~S. Croot, III and C.~Elsholtz.
\newblock On variants of the larger sieve.
\newblock {\em Acta Math. Hungar.}, 103(3):243--254, 2004.

\bibitem{DEGS05}
R.~Dietmann, C.~Elsholtz, K.~Gyarmati, and M.~Simonovits.
\newblock Shifted products that are coprime pure powers.
\newblock {\em J. Combin. Theory Ser. A}, 111(1):24--36, 2005.

\bibitem{DKM22}
A.~B. Dixit, S.~Kim, and M.~R. Murty.
\newblock Generalized {D}iophantine {$m$}-tuples.
\newblock {\em Proc. Amer. Math. Soc.}, 150(4):1455--1465, 2022.

\bibitem{D02}
A.~Dujella.
\newblock On the size of {D}iophantine {$m$}-tuples.
\newblock {\em Math. Proc. Cambridge Philos. Soc.}, 132(1):23--33, 2002.

\bibitem{D04}
A.~Dujella.
\newblock Bounds for the size of sets with the property {$D(n)$}.
\newblock {\em Glas. Mat. Ser. III}, 39(59)(2):199--205, 2004.

\bibitem{D24}
A.~Dujella.
\newblock {\em {D}iophantine $m$-tuples and {E}lliptic Curves}, volume~79 of {\em Developments in Mathematics}.
\newblock Springer, Cham, 2024.

\bibitem{DF05}
A.~Dujella and C.~Fuchs.
\newblock Complete solution of a problem of {D}iophantus and {E}uler.
\newblock {\em J. London Math. Soc. (2)}, 71(1):33--52, 2005.

\bibitem{DL05}
A.~Dujella and F.~Luca.
\newblock Diophantine {$m$}-tuples for primes.
\newblock {\em Int. Math. Res. Not.}, (47):2913--2940, 2005.

\bibitem{E01}
C.~Elsholtz.
\newblock The inverse {G}oldbach problem.
\newblock {\em Mathematika}, 48(1-2):151--158, 2001.

\bibitem{EW24}
C.~Elsholtz and L.~Wurzinger.
\newblock Sumsets in the set of squares.
\newblock {\em Q. J. Math.}, 75(4):1243--1254, 2024.

\bibitem{E47}
P.~{Erd\"os}.
\newblock Some remarks on the theory of graphs.
\newblock {\em Bull. Amer. Math. Soc.}, 53:292--294, 1947.

\bibitem{G71}
P.~X. Gallagher.
\newblock A larger sieve.
\newblock {\em Acta Arith.}, 18:77--81, 1971.

\bibitem{GM20}
A.~M. G\"{u}lo\u{g}lu and M.~R. Murty.
\newblock The {P}aley graph conjecture and {D}iophantine {$m$}-tuples.
\newblock {\em J. Combin. Theory Ser. A}, 170:105155, 9, 2020.

\bibitem{G01}
K.~Gyarmati.
\newblock On a problem of {D}iophantus.
\newblock {\em Acta Arith.}, 97(1):53--65, 2001.

\bibitem{GSS02}
K.~Gyarmati, A.~S\'ark\"ozy, and C.~L. Stewart.
\newblock On shifted products which are powers.
\newblock {\em Mathematika}, 49(1-2):227--230, 2002.

\bibitem{GS07}
K.~Gyarmati and C.~L. Stewart.
\newblock On powers in shifted products.
\newblock {\em Glas. Mat. Ser. III}, 42(62)(2):273--279, 2007.

\bibitem{HS18}
L.~Hajdu and A.~S\'{a}rk\"{o}zy.
\newblock On multiplicative decompositions of polynomial sequences, {I}.
\newblock {\em Acta Arith.}, 184(2):139--150, 2018.

\bibitem{HS18b}
L.~Hajdu and A.~S\'{a}rk\"{o}zy.
\newblock On multiplicative decompositions of polynomial sequences, {II}.
\newblock {\em Acta Arith.}, 186(2):191--200, 2018.

\bibitem{HS20}
L.~Hajdu and A.~S\'{a}rk\"{o}zy.
\newblock On multiplicative decompositions of polynomial sequences, {III}.
\newblock {\em Acta Arith.}, 193(2):193--216, 2020.

\bibitem{HTZ19}
B.~He, A.~Togb\'{e}, and V.~Ziegler.
\newblock There is no {D}iophantine quintuple.
\newblock {\em Trans. Amer. Math. Soc.}, 371(9):6665--6709, 2019.

\bibitem{IK04}
H.~Iwaniec and E.~Kowalski.
\newblock {\em Analytic number theory}, volume~53 of {\em American Mathematical Society Colloquium Publications}.
\newblock American Mathematical Society, Providence, RI, 2004.

\bibitem{K91}
A.~A. Karatsuba.
\newblock Distribution of values of {D}irichlet characters on additive sequences.
\newblock {\em Dokl. Akad. Nauk SSSR}, 319(3):543--545, 1991.

\bibitem{KYY}
S.~{Kim}, C.~H. {Yip}, and S.~{Yoo}.
\newblock {Multiplicative structure of shifted multiplicative subgroups and its applications to Diophantine tuples}, 2023.
\newblock Canad. J. Math., to appear. arXiv:2309.09124.

\bibitem{KST54}
T.~K\"ovari, V.~T. S\'os, and P.~Tur\'an.
\newblock On a problem of {K}. {Z}arankiewicz.
\newblock {\em Colloq. Math.}, 3:50--57, 1954.

\bibitem{LPS67}
L.~J. Lander, T.~R. Parkin, and J.~L. Selfridge.
\newblock A survey of equal sums of like powers.
\newblock {\em Math. Comp.}, 21:446--459, 1967.

\bibitem{LN97}
R.~Lidl and H.~Niederreiter.
\newblock {\em Finite fields}, volume~20 of {\em Encyclopedia of Mathematics and its Applications}.
\newblock Cambridge University Press, Cambridge, second edition, 1997.

\bibitem{L05}
F.~Luca.
\newblock On shifted products which are powers.
\newblock {\em Glas. Mat. Ser. III}, 40(60)(1):13--20, 2005.

\bibitem{MV07}
H.~L. Montgomery and R.~C. Vaughan.
\newblock {\em Multiplicative number theory. {I}. {C}lassical theory}, volume~97 of {\em Cambridge Studies in Advanced Mathematics}.
\newblock Cambridge University Press, Cambridge, 2007.

\bibitem{S14}
A.~S\'{a}rk\"{o}zy.
\newblock On multiplicative decompositions of the set of the shifted quadratic residues modulo {$p$}.
\newblock In {\em Number theory, analysis, and combinatorics}, De Gruyter Proc. Math., pages 295--307. De Gruyter, Berlin, 2014.

\bibitem{S13}
I.~E. Shparlinski.
\newblock Additive decompositions of subgroups of finite fields.
\newblock {\em SIAM J. Discrete Math.}, 27(4):1870--1879, 2013.

\bibitem{S08}
C.~L. Stewart.
\newblock On sets of integers whose shifted products are powers.
\newblock {\em J. Combin. Theory Ser. A}, 115(4):662--673, 2008.

\bibitem{T41}
P.~Tur\'an.
\newblock Eine {E}xtremalaufgabe aus der {G}raphentheorie.
\newblock {\em Mat. Fiz. Lapok}, 48:436--452, 1941.

\bibitem{Y24+}
C.~H. {Yip}.
\newblock {Improved upper bounds on Diophantine tuples with the property $D(n)$}.
\newblock {\em Bull. Aust. Math. Soc.}, 111(3):428--432, 2025.

\bibitem{Y24}
C.~H. Yip.
\newblock Multiplicatively reducible subsets of shifted perfect {$k$}th powers and bipartite {D}iophantine tuples.
\newblock {\em Acta Arith.}, 218(3):251--271, 2025.

\bibitem{Y26}
C.~H. {Yip}.
\newblock {Multiplicative irreducibility of small perturbations of the set of shifted $k$-th powers}.
\newblock {\em Combinatorica}, 46(1):Paper No. 1, 2026.

\end{thebibliography}

\end{document}